\newtheorem{thm}{Theorem}[section]
\newtheorem{cor}[thm]{Corollary}
\newtheorem{lem}[thm]{Lemma}
\newtheorem{prop}[thm]{Proposition}
\newtheorem{ex}{Example}
\newtheorem{fact}[thm]{Fact}
\newtheorem{assu}[thm]{Assumption}
\theoremstyle{definition}
\newtheorem{defi}[thm]{Definition}
\newtheorem{rem}[thm]{Remark}
\let\oldmarginpar\marginpar
\renewcommand\marginpar[1]{\oldmarginpar[\raggedleft\footnotesize #1]%
{\raggedright\footnotesize #1}}
\newcommand{\inputbox}[1]{\iobox{Input:}{#1}}
\newcommand{\outputbox}[1]{\iobox{Output:}{#1}}
\newcommand{\iobox}[2]{\parbox[t]{0.7in}{\bf #1}\parbox[t]{\columnwidth-0.7in}{#2}}
\newcommand{\norm}[1]{\left\Vert#1\right\Vert}
\newcommand{\abs}[1]{\left\vert#1\right\vert}
\newcommand{\Real}{\mathbb R}
\newcommand{\real}{\mathbb R}
\newcommand{\epsLexVector}{\mbox{\boldmath$\epsilon$}}
\newcommand{\ERRE}{\mathcal{R}}
\newcommand{\GI}{\mathcal{G}}
\newcommand{\BI}{\mathcal{B}}
\newcommand{\KAPPA}{\mathit{K}}
\newcommand{\QU}{\mathbf{Q}}
\newcommand{\PI}{\mathbf P}
\newcommand{\CI}{\mathcal{C}}
\newcommand{\mbzero}{\mathbf 0}
\newcommand{\inv}{^{-1}}
\newcommand{\columnIndex}[1]{_{ \cdot #1}}
\newcommand{\rowIndex}[1]{_{  #1}}
\newcommand{\fullySemimo}{\ensuremath{E_0^f}}
\newcommand{\timeLP}[2]{\ensuremath{T_{\textit{LP}}(#1,#2)}}
\DeclareMathOperator{\cone}{cone}
\DeclareMathOperator{\redundancy}{redundancy}
\DeclareMathOperator{\adjacency}{adjacency}
\DeclareMathOperator{\INT}{int}
\DeclareMathOperator{\aff}{aff}
\DeclareMathOperator{\relINT}{rel\,int}
\DeclareMathOperator{\rank}{rank}
\begin{document}

\title{An output-sensitive algorithm for multi-parametric LCPs with sufficient matrices}%
\author{S. Columbano\\
  Institute for Operations Research\\
  ETH Zurich, Switzerland\\
  seba@ifor.math.ethz.ch\\
  \and
  K. Fukuda\\
  Institute for Operations Research and\\
  Institute of Theoretical Computer Science\\
  ETH Zurich, Switzerland\\
  fukuda@ifor.math.ethz.ch\\
  \and
  C.N. Jones\\
  Automatic Control Laboratory\\
  ETH Zurich, Switzerland\\
  cjones@ee.ethz.ch\\
} \date{July 15, 2008}
\maketitle

\begin{abstract}
This paper considers the multi-parametric linear complementarity problem
(pLCP) with sufficient matrices.  The main result is an algorithm to
find a polyhedral decomposition of the set of feasible parameters and
to construct a piecewise affine function that maps 
each feasible parameter to a solution of the associated LCP in
such a way that the function is affine over each cell 
of the decomposition.  The algorithm is output-sensive in the sense that
its time complexity is polynomial in the size of the input
and linear in the size of the output, when the problem is 
non-degenerate.  We give a lexicographic perturbation
technique to resolve degeneracy as well.  Unlike for the non-parametric
case, the resolution turns out to be nontrivial, and
in particular, it involves linear programming (LP) duality and
multi-objective LP.
\end{abstract}

\section{Introduction}
Given a real square matrix $M$ and a vector $q$, solving a linear complementarity problem (LCP) consists of
finding two nonnegative vectors $w$ and $z$ that satisfy the conditions
\begin{align}\label{eq:DefIntroLCP}
  w-Mz=q,\quad w\geq0,\quad z\geq0,\quad w^Tz=0\enspace.
\end{align}
This simply stated and well-studied problem has far-reaching applications that have been well-documented in
the literature. Rather than give a survey here, the interested reader is referred to the
books~\cite{murty_onlinebook,cottle}.

Several authors have studied the properties of various parametric versions of this problem
(e.g.~\cite{Jones_Morari,klatte,cottle,klatte_1985_pLCP_structure,danao_1997_On_pLCP,tammer_1996_pLCP,DeSDeM:93-69}),
but unless there are restrictions placed on the particular parametric LCP (pLCP) considered, it is in general
unrealistic to expect an efficient computational algorithm. We here study the class of pLCPs where the matrix
$M$ is sufficient\footnote{Sufficient matrices are defined in Section~\ref{section:sufficient_matrices}.} and
the right hand side (the vector $q$ in~\eqref{eq:DefIntroLCP}) is allowed to vary within a given affine
subspace $S$. The goal is then to compute functions $z(\cdot)$ and $w(\cdot)$ that map from the affine
subspace $S$ to a solution for pLCP~\eqref{eq:DefIntroLCP} whenever one exists.

This class of pLCP includes the important cases of linear and convex quadratic programs, where parameters
appear linearly in the cost and the right hand side of the constraints~\cite{murty_onlinebook}. In recent
years, there has been a great deal of interest in the control community in parametric programming due to the
fact that an important class of control algorithms for constrained linear systems, called model predictive
controllers (MPC), can be posed as parametric linear or quadratic programs. The offline solution of these
parametric problems results in an explicit representation of the optimal control action, which in some cases
allows the controller to be implemented on systems with sampling rates of milli- and micro-seconds instead of
the traditional seconds and minutes~\cite{seron:goodwin:dona:2000,johansen:peterson:slupphaug:2000,BMDP02a}. A
similar setup results when computing optimal policies in a dynamic programming framework for partially
observable Markov decision processes~\cite{cassandra_1998_pomdp}.

While parametric programming is widely used for sensitivity analysis, it is also applied in several other
applications. In~\cite{jones_2008_proj_mplp} it was shown that polyhedral projection can be reduced to
parametric linear programming in polynomial time and of course such projections have uses ranging from the
computation of invariant sets~\cite{blanchini99} and force closures~\cite{ponce95} to program
analysis~\cite{program_analysis_2007} and theorem proving~\cite{hooker1992}. Polyhedral vertex and facet
enumeration can also be posed as projection problems, and hence solved with the proposed pLCP
approach~\cite{handbook}, which as discussed below results in an output sensitive algorithm in the
non-degenerate case (although not the most efficient one for this purpose).

In~\cite{klatte} it was shown that if $M$ is a sufficient matrix and $S$ satisfies certain \emph{general
  position}\footnote{General position is defined in Section~\ref{sec:critical domains}.} assumptions, then
$z(\cdot)$ and $w(\cdot)$ are unique piecewise affine functions that are defined over a polyhedral partition
of a convex set. There is, however, no known efficient method of testing this general position assumption
\emph{a priori} and in fact, it is often not satisfied even in the simplest case when the pLCP models a
parametric linear program~\cite{Colin_pLP_Algorithm}.

This paper extends the result of~\cite{klatte} by removing the restrictive and untestable general position
assumption, allowing the algorithm to operate on any pLCP which is defined by a sufficient matrix and an
affine subspace.  This is achieved through a lexicographic perturbation technique, which has the effect of
symbolically shifting the affine subspace an infinitesimally small amount and into general position. We first
demonstrate that this perturbation always results in a problem that is in fact in general position and hence
has the favorable uniqueness and partitioning properties discussed above. The challenge then becomes one of
doing calculations in this perturbed space. The main optimization problem that arises as a result of the
perturbation is a linear program that is polynomially parameterized by a positive variable $\epsilon$. The
decision problem to be tackled is then the determination of the behavior of this parametric problem as the
parameter $\epsilon$ tends to zero. Section~\ref{section:algo_non_GP} discusses how this problem can be
converted into a multi-objective linear program, which can then be solved efficiently. The proposed technique
should be applicable to other algorithms that rely on lexicographic perturbation to handle degeneracy.

The resulting algorithm has the strong property that its complexity is polynomial in the size of the input
(the matrix $M$) and linear in the size of the output (the number of pieces in the piecewise-affine functions
$w$ and $z$). For this reason, we call the algorithm `output sensitive', although it should be noted that the
complexity of the functions $w$ and $z$ can be exponential in the worst case and that this complexity result
is for the lexicographically shifted affine subspace, which may be more complex than the unshifted case.

The reminder of the paper is organized as follows.  Section~\ref{section:pLCP} gives some basic notations and
a formal definition of the parametric LCP.  Section~\ref{section:sufficient_matrices} provides some useful
properties of pLCPs on sufficient matrices. Section~\ref{section:Algo_GP} then presents the proposed method
with a general position assumption, and then this is relaxed in Section~\ref{section:algo_non_GP} where the
lexicographic perturbation is introduced. Finally, Section~\ref{section:complexity} analyzes the complexity of
the algorithm.

\section{Parametric LCP, critical regions and their adjacency}\label{section:pLCP}

Let us first fix some useful notations for matrices.  For a matrix $A\in\real^{m\times n}$ and a column index
$j\in \{1,\dots,n\}$, $A\columnIndex{j}\in\real^m$ denotes the $j$-th column vector of $A$.  Similarly, for a
row index $i\in \{1,\dots,m\}$ $A\rowIndex{i}\in\real^{I\times n}$ denotes the $i$-th row vector of $A$.  For
a subset $J\subseteq\{1,\dots,n\}$, $A\columnIndex{J}\in\real^{m\times J}$ denotes the matrix formed by the
columns of $A$ indexed by $J$, and for a vector $v\in\real^n$, $v_J$ denotes the vector formed by the
components of $v$ indexed by $J$.  For $I\subseteq\{1,\dots,m\}$, we denote by $A\rowIndex{I}\in\real^{I\times
  n}$ the matrix formed by the rows of $A$ indexed by $I$.

Given a real square matrix $M$ and a vector $q$ of size $n$, the \emph{linear complementarity problem} (LCP)
is to find two nonnegative vectors $w$ and $z$ that satisfy
\begin{align}\label{eq:Def1LCP}
  w-Mz=q,\quad w\geq0,\quad z\geq0,\quad w^Tz=0\enspace.
\end{align}
In this paper, we consider the LCP~\eqref{eq:Def1LCP} where the right-hand side is allowed to vary within some
affine subspace. Specifically, the goal is to find two functions $w(\cdot)$ and $z(\cdot)$ that
solve~\eqref{eq:Def1LCP} over a given affine subspace $S$.
\begin{defi}\label{defn:pLCP}
  Let $Q \in \real^{n\times d}$ be a matrix of rank $d$, $q\in \real^n$ a vector and $M\in\real^{n\times n}$ a
  matrix of order $n$.  The functions $w(\cdot)$ and $z(\cdot)$ are a solution to the
  pLCP~\eqref{eq:pLCP_equations} if for every $\theta\in \Theta_f$, $w(\theta)$ and $z(\theta)$ satisfy the
  relations
  \begin{subequations}\label{eq:pLCP_equations}
    \begin{align}
      w(\theta)-Mz(\theta)&=q+Q\theta\enspace, \label{eq:pLCP_equatiom}\\
      w(\theta),z(\theta) &\geq 0\enspace, \label{eq:pLCP_nonNegativity} \\
      w(\theta)^Tz(\theta)&=0\enspace, \label{eq:pLCP_complementarity}
    \end{align}
  \end{subequations}
  where $\Theta_f\subseteq\real^d$ is the set of feasible parameters $\theta$, that is, those for which a
  solution to~\eqref{eq:pLCP_equations} exists.
\end{defi}

For the remainder of the paper we assume that the problem data $M$, $q$ and $Q$ are given and we define
$A\in\real^{n \times 2n}$ to be the matrix $\begin{bmatrix}I&-M\end{bmatrix}$. Consider the following system
of linear equality constraints in non-negative variables
\begin{equation} \label{eq:Basis} Ax=q \enspace, \ \ \ x\geq 0\enspace.
\end{equation}
A \emph{basis} is a set $B\subset \{1,2, \dots,2n\}$ such that $|B|=n$ and $rank(A\columnIndex{B})=n$;
$N:=\{1,\dots,2n\}\backslash B$ is its complement and we call $x_B$ and $x_N$ the \emph{basic} and
\emph{non-basic} variables respectively.  Every basis $B$ defines a \emph{basic solution} to the linear system
(\ref{eq:Basis})
\begin{equation}\label{feasibleB}
  x_B=A\columnIndex{B}\inv q\enspace, \ \ \ x_N=0\enspace.
\end{equation}
A basis $B$ is called \emph{complementary} if $|\{i, i+n\} \cap B|= 1$ for all $i=1,\ldots, n$, and
\emph{feasible} if the associated basic solution satisfies the nonnegativity constraint in (\ref{eq:Basis}),
i.e. $A\columnIndex{B}^{-1}q\geq 0$. Every complementary feasible basis defines a solution of the
LCP~\eqref{eq:Def1LCP}, by setting $(w^T,z^T)=x^T$. In the parametric case, each basis is feasible for a set
of parameters, which leads to the notion of a critical region.
\begin{defi}
  The \emph{critical region} $\ERRE_B$ of a complementary basis $B$ is defined as the set of all parameter
  values for which $B$ is feasible, i.e.,
  \begin{equation}
    \ERRE_B:=\{\theta\in \real^d\,\vert\, A\columnIndex{B}\inv (q+Q\theta)\geq0\}\enspace.
  \end{equation}
  A complementary basis $B$ is called \emph{feasible} for the pLCP~\eqref{eq:pLCP_equations} if $\ERRE_B$ is
  nonempty.
\end{defi}

By definition critical regions are convex polyhedra contained in the set of feasible parameters
$\Theta_f$. Each feasible complementary basis $B$ defines a solution of the pLCP for each $\theta\in\ERRE_B$
as $\begin{bmatrix}w(\theta)\\z(\theta)\end{bmatrix}_B=A\columnIndex{B}\inv(q+Q\theta)$ and
$\begin{bmatrix}w(\theta)\\z(\theta)\end{bmatrix}_N=0$, which is an affine function in $\ERRE_B$. As a result,
if $\Theta_f$ can be partitioned into a set of critical regions whose interiors are disjoint, then we have
immediately a piecewise affine solution of pLCP~\eqref{eq:pLCP_equations} defined over these critical regions.

In this paper we define a set of conditions under which such a partitioning can be achieved and introduce an
efficient algorithm for this class of problems. The algorithm is based on the tracing of a graph whose nodes
are the full-dimensional critical regions and whose edges are the pairs of adjacent regions (having a
$(d-1)$-dimensional intersection).

\begin{defi} \label{def:cr_adjacency} Two critical regions $\ERRE_1, \ERRE_2$ are called \emph{adjacent} if
  their intersection $\ERRE_1\cap \ERRE_2$ is of dimension $d-1$.
\end{defi}
\begin{defi}\label{def:cr_adjacency_graph}
  Let $V$ be the set of complementary bases $B$ of pLCP~\eqref{eq:pLCP_equations} such that $\ERRE_B$ is
  full-dimensional and let $E$ be the set of pairs of bases in $V$ whose critical regions are adjacent. The
  graph $\GI:=(V,E)$ is called the \emph{critical region graph} of the pLCP~\eqref{eq:pLCP_equations}.
\end{defi}

The proposed algorithm enumerates all full-dimensional critical regions by tracing the above graph. This
tracing requires that we are able to enumerate all neighbors of a given complementary basis.  The following
section discusses the properties of this graph and investigates restrictions on matrices $M$ under which the
neighbor search can be done efficiently.

\section{Well behaving matrix classes for parametric LCPs}\label{section:sufficient_matrices}
The goal of solving a parametric LCP is to compute functions $w(\cdot)$ and $z(\cdot)$ that
satisfy~\eqref{eq:pLCP_equations} for all feasible values of the parameter $\theta\in\Theta_f$. As discussed
in the introduction, linear complementarity problems include a very large set of difficult optimization
problems and so we cannot hope for a solution in the general case. In this section, we identify classes of
LCPs that are `well-behaving', or that have properties which guarantee that the algorithm given in
Section~\ref{section:Algo_GP} will find a solution.

The two key properties that will be needed are convexity of the feasible set $\Theta_f$ and the existence of a
``canonical'' single-valued mapping from parameters to critical regions. The latter essentially means that the
relative interiors of critical regions do not intersect. In this section we will formalize these notions and
discuss a well-known matrix class that has the appropriate properties when the affine subspace $S
\subset\real^n$ of all possible right hand sides is the whole space $\real^n$. In Section~\ref{sec:critical
  domains} we will then generalize this and give conditions such that these properties still hold when the
right hand side is restricted to lie in some lower-dimensional affine subspace.\footnote{Throughout the paper,
  we use the same notation regarding matrix classes as in~\cite{cottle} and we use the properties of each
  class proved there. At the end of the paper we append an auxiliary section, where the relevant definitions
  and theorems are mentioned.}

\subsection{Complementary cones}
We begin by describing the set of right hand sides $q$ in \eqref{eq:pLCP_equations} that are feasible for a
given set of active constraints.

For any index $i\in \{1,\dots , 2n\}$ we denote with $\bar i$ the \emph{complementary index} of $i$,
i.e. $\bar i = (i+n) \mod 2n$. For a set $I\subseteq \{1,\dots , 2n\}$, the set $\bar I $ is defined as the
set of all complementary indices of elements in $I$.  A set $J\subset\{1,\dots,2n\}$ is called
\emph{complementary\/} if $i\in J $ implies $\bar i\not\in J$.

\begin{defi}
  For any complementary set $J$, the cone $\CI(J):=\cone(A\columnIndex{J})$ is called a \emph{complementary
    cone\/} (relative to $M$), where $\cone(T)$ denotes the cone of all nonnegative combinations of the
  columns of a matrix $T$.
\end{defi}

If $B$ is a complementary basis, then the complementary cone $\CI(B)$ is full-dimensional, and conversely if
the complementary cone $\CI(J)$ is full-dimensional then the submatrix $A\columnIndex{J}$ has full rank,
i.e. $J$ is a complementary basis. For a complementary basis $B$, we have
\begin{align}\label{eq:compl_cone}
  \CI(B)=\{y\in\real^n\,\vert\, A\columnIndex{B}\inv y\geq 0\}\enspace.
\end{align}
In the remainder of the paper we will denote by $\beta$ the matrix $ A\columnIndex{B}\inv $, where $B$ is the
considered basis. Therefore we will write $\CI(B)=\{y\in\real^n\,\vert\, \beta y\geq 0\}$.

One can see that for a given basis $B$, the cone $\CI(B)$ is the set of all right hand sides that are feasible
for LCP~\eqref{eq:Def1LCP}. We are interested in LCPs that have complementary cones with disjoint interiors
and so we introduce the class of sufficient matrices, which has this property.
\begin{defi}
  A matrix $M\in \real^{n\times n}$ is called \emph{column sufficient} if it satisfies the implication
  \begin{equation} [z_i(Mz)_i\leq0 \mbox{ for all }i]\; \Longrightarrow \;[z_i(Mz)_i=0\mbox{ for all
    }i]\enspace.
  \end{equation}
  The matrix $M$ is called \emph{row sufficient} if its transpose is column sufficient. If $M$ is both column
  and row sufficient, then it is called \emph{sufficient}.
\end{defi}
\begin{rem}
  We note that both positive semidefinite (abbreviated by PSD) and $\PI$-matrices are sufficient. For a given
  matrix $M$ it is possible to test in finite time whether it is sufficient, although no polynomial time test
  is currently known.
\end{rem}

The class of LCPs with sufficient matrices has been studied extensively, partly because this class appears to
capture all critical structures for LCPs to behave nicely.  In particular, this class admits many fruitful
results ranging from combinatorial algorithms and duality \cite{ft-lcom-92,fnt-eptlc-98} to the efficient
solvability by interior-point methods \cite{kmny-uaipa-91}.  We will see that this class is ideal also for the
investigation of parametric LCPs.  We start with a key fact.

\begin{prop} [{\cite[Theorem~6.6.6]{cottle}}] \label{prop:disjointCC} If $M$ is a sufficient matrix, then the
  relative interiors of any two distinct complementary cones are disjoint.
\end{prop}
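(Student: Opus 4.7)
The plan is to argue by contradiction. Suppose there exist distinct complementary cones $\CI(J_1)$ and $\CI(J_2)$ whose relative interiors share a common point $q$. Since the relative interior of a finitely generated cone is the set of its strictly positive conic combinations, I would write
\[
  q=\sum_{i\in J_1}\lambda^1_i A\columnIndex{i}=\sum_{i\in J_2}\lambda^2_i A\columnIndex{i},\qquad \lambda^k_i>0\text{ for all }i\in J_k.
\]
Because $A=[I,-M]$ and the $J_k$ are complementary, the coefficients in the two halves of each representation assemble into two pairs $(w^k,z^k)$ of nonnegative, complementary vectors solving $w^k-Mz^k=q$, whose supports are exactly the ``$w$-side'' and ``$z$-side'' indices picked by $J_k$.

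The heart of the argument is to apply column sufficiency to the difference. Setting $z:=z^1-z^2$ and $w:=w^1-w^2$, the common right-hand side $q$ cancels and one obtains $w=Mz$. Using nonnegativity of each $(w^k,z^k)$ together with the individual complementarity identities $w^k_iz^k_i=0$, a direct expansion yields
\[
  z_i(Mz)_i \;=\; z_iw_i \;=\; -\bigl(z^1_iw^2_i + z^2_iw^1_i\bigr)\;\le\; 0 \qquad \text{for every }i.
\]
Column sufficiency of $M$ then upgrades each of these inequalities to an equality, so $z^1_iw^2_i=0$ and $z^2_iw^1_i=0$ for all $i$. These are the \emph{cross-complementarity} relations between the two solutions.

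Translated back to $J_1$ and $J_2$, these relations forbid the ``opposite-choice'' configurations where $i\in J_1$ while $i+n\in J_2$ (or the symmetric case): $J_1$ and $J_2$ never disagree on a complementary pair, and in particular $J_1\cup J_2$ is still complementary. For the principal case of interest in the paper, where $J_1,J_2$ are complementary \emph{bases} (picking exactly one element from each pair $\{i,i+n\}$), this compatibility immediately forces $J_1=J_2$, contradicting distinctness. The main obstacle lies in extending this to the general case where the $J_k$ need not be bases. I would handle this by looking at the convex combination $(1-t)(w^1,z^1)+t(w^2,z^2)$: the cross-complementarity above guarantees it is again a nonnegative complementary solution of $w-Mz=q$, and for $t\in(0,1)$ its support is precisely $J_1\cup J_2$. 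This places $q$ in $\relINT \CI(J_1\cup J_2)$; combined with $q\in\relINT\CI(J_k)$ and the face structure of the larger cone, this forces $\CI(J_1)=\CI(J_2)=\CI(J_1\cup J_2)$, contradicting that the two complementary cones were distinct.
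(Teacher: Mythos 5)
The paper does not give its own proof of this proposition; it simply cites Theorem~6.6.6 of Cottle--Pang--Stone. Your argument via cross-complementarity is the standard route to that result, and the key step (setting $z=z^1-z^2$, $w=w^1-w^2$ and applying column sufficiency to obtain $z^1_iw^2_i=z^2_iw^1_i=0$ for all $i$) is exactly right; it is the content of the appendix's Theorem~\ref{thm:characterisation_CSu}. For the case you call ``the principal case,'' namely complementary sets of cardinality $n$ (picking exactly one of each pair $\{i,\bar i\}$, whether or not they are bases), your conclusion $J_1=J_2$ is correct and the proof is complete. This is precisely the scope of the cited source: in Cottle--Pang--Stone a complementary cone is always generated by a full complementary set of $n$ columns.

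Your attempted ``extension'' to complementary sets of cardinality less than $n$ does not go through, for two reasons. First, the final deduction has a gap: from $q\in\relINT\CI(J_1)\cap\relINT\CI(J_1\cup J_2)$ with $\CI(J_1)\subseteq\CI(J_1\cup J_2)$ one cannot conclude $\CI(J_1)=\CI(J_1\cup J_2)$ --- for cones $C\subseteq D$, a common relative interior point does not force $C=D$ (consider the ray through $(1,1)$ inside the nonnegative quadrant). Second, and more importantly, the statement for sub-$n$ complementary sets is in fact \emph{false}, so no fix can close this gap. Take $n=3$ and
\[
  M=\begin{pmatrix} 0 & 1 & 1\\ -1 & 1 & 0\\ -1 & 0 & 1\end{pmatrix},
\]
which is positive semidefinite (its symmetric part is $\operatorname{diag}(0,2,2)$) and hence sufficient. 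With $A=[I,-M]$ one has $A\columnIndex{4}=-M\columnIndex{1}=(0,1,1)^T$. Take $J_1=\{4\}$ and $J_2=\{2,3\}$; both are complementary. Then $\CI(J_1)$ is the ray through $(0,1,1)^T$ while $\CI(J_2)=\{(0,y,z):y,z\ge 0\}$, and the point $q=(0,1,1)^T$ lies in the relative interior of both, even though the two cones are distinct. Note also that $J_1\cup J_2=\{2,3,4\}$ gives $\CI(J_1\cup J_2)=\CI(J_2)\supsetneq\CI(J_1)$, so your convex-combination construction is realized here and exhibits exactly the failure of the last step. So you should restrict the statement (as the cited theorem does) to complementary sets of cardinality $n$; under that reading your proof is correct, and the remaining paragraph of the proposal should simply be deleted.
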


The union of all complementary cones forms a set known as the \emph{complementary range} $\KAPPA(M)$. The
complementary range is equal to the set of all right hand sides of the LCP for which a feasible solution
exists \cite{cottle}
\begin{equation}\label{eq:compl_range}
  \KAPPA(M):=\{q\,\vert\, \text{the LCP~\eqref{eq:Def1LCP} with matrix $M$ and right hand side $q$ is feasible}\}\enspace.
\end{equation}

\begin{prop}\label{prop:convex}
  If $M$ is a sufficient matrix, then the complementary range $\KAPPA(M)$ is a convex polyhedral cone
  $\KAPPA(M)=\cone([I\quad-M])$.
\end{prop}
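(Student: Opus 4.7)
The plan is to prove the equality of sets in two directions, from which convexity and polyhedrality follow automatically, since $\cone([I\ -M])$ is manifestly a finitely generated (hence convex polyhedral) cone.

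The inclusion $\KAPPA(M)\subseteq \cone([I\ -M])$ is essentially immediate from the definitions. By \eqref{eq:compl_range}, every $q\in\KAPPA(M)$ admits a feasible complementary pair $(w,z)\geq 0$ with $w-Mz=q$ and $w^Tz=0$. Reading off the support of $(w,z)$ gives a complementary basis $B$ (or at least a complementary set, completing it to a basis if necessary) with $q\in\CI(B)=\cone(A\columnIndex{B})$, and $\CI(B)$ is generated by $n$ of the $2n$ columns of $A=[I\ -M]$. So $q$ is a nonnegative combination of columns of $[I\ -M]$. No use of sufficiency is needed here.

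The reverse inclusion $\cone([I\ -M])\subseteq \KAPPA(M)$ is where the assumption that $M$ is sufficient enters. Given $q\in\cone([I\ -M])$, there exist $\lambda,\mu\geq 0$ with $q=\lambda-M\mu$. Setting $w'=\lambda$, $z'=\mu$ produces a nonnegative (but not necessarily complementary) solution of the linear system $w-Mz=q$, i.e., the LCP data $(M,q)$ is \emph{feasible} in the sense of having its nonnegativity constraints satisfied. The key classical fact to invoke is that every sufficient matrix belongs to the class $\QU_0$, so that feasibility of the linear system already implies solvability of the LCP itself. Applying this to our $q$ yields $(w,z)\geq 0$ with $w-Mz=q$ and $w^Tz=0$, which by \eqref{eq:compl_range} means $q\in\KAPPA(M)$. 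This membership of sufficient matrices in $\QU_0$ is presumably the result the authors plan to cite from the auxiliary section referenced in the earlier footnote, so I would simply invoke it by reference.

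Combining the two inclusions gives $\KAPPA(M)=\cone([I\ -M])$, and the right-hand side is visibly a convex polyhedral cone. The main (only) obstacle is the $\QU_0$ step, which is not a calculation one performs on the spot but rather the standard structural theorem about sufficient matrices; everything else is definition chasing.
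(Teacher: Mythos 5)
Your argument is correct and follows essentially the same route as the paper: the paper's proof is a one-line citation of the fact that sufficient matrices are $\QU_0$ (Theorem~\ref{thm:rowsuff P0Q0}), combined with the appendix characterization that $M\in\QU_0$ is equivalent to $\KAPPA(M)=\cone([I\;{-M}])$; you have simply unpacked that equivalence into the two explicit inclusions, with sufficiency entering only through the $\QU_0$ property exactly as in the paper.
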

\begin{proof}
  The statement follows from the fact that sufficient matrices are in $\QU_0$, see Theorem \ref{thm:rowsuff
    P0Q0}.
\end{proof}

\begin{rem}
  Throughout the paper we will draw upon the properties of two matrix classes extensively. The first class is
  the $\QU_0$-matrices, whose complementary range $\KAPPA(M)$ is a convex cone and the second is the
  \emph{fully semi-monotone} matrices, denoted by $\fullySemimo$ which have complementary cones that are all
  disjoint in their interiors.  The class of sufficient matrices is contained in $\QU_0\cap\fullySemimo$ and
  is perhaps the largest known subclass defined by a simple set of conditions, which is why sufficiency is
  assumed for the majority of the results in this paper. It should be noted, however, that many of the results
  hold under slightly relaxed assumptions.
\end{rem}

We will study now the adjacency relationship of complementary cones for the case of sufficient
matrices. Specifically, since our goal is to compute the critical region graph $\GI$, finding all neighbors of
any given region is a crucial issue.  We first look at the neighbors of a complementary cone that determine
possible candidates for the neighbors for a critical region.

\begin{defi}
  Two complementary bases $B_1$ and $B_2$ are called \emph{adjacent} if their cones $\CI(B_1)$ and $\CI(B_2)$
  are \emph{adjacent}, that is, the dimension of $\CI(B_1)\cap \CI(B_2)$ is $n-1$.
\end{defi}

The following lemma is important in narrowing down the candidates of the neighbor search.

\begin{lem} \label{thm:adjacency_CSU_necessary} If $M\in\real^{n \times n}$ is a sufficient matrix and $B_1$
  and $B_2$ are adjacent complementary bases, then $|B_1\cap B_2 | \geq n-2$.
\end{lem}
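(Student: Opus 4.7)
The plan is to reduce to a normal form by principal pivoting, then parametrize the cone intersection and bound one factor using column sufficiency of a principal submatrix.

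First I would apply a sequence of principal pivots (which preserve sufficiency, see~\cite{cottle}) and reindex columns so that without loss of generality $B_1=\{1,\dots,n\}$ and $B_2=\{m{+}1,\dots,n,\,n{+}1,\dots,n{+}m\}$, where $m:=n-|B_1\cap B_2|$. The goal then becomes showing $m\le 2$. Let $M'$ denote the leading $m\times m$ principal submatrix of $M$. Then $M'$ is sufficient (principal submatrices of sufficient matrices are sufficient), and a block determinant calculation on $A\columnIndex{B_2}$ shows that $M'$ must be invertible.

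Next I would parametrize $\CI(B_1)\cap\CI(B_2)$. Writing any $y\in\CI(B_2)$ as $y=\sum_{l>m}\mu_l\,e_l+\sum_{l\le m}\nu_l(-M\columnIndex{l})$ with $\mu,\nu\ge 0$, the extra condition $y\in\CI(B_1)$, i.e.\ $y\ge 0$, splits into $M'\nu\le 0$ on the first $m$ coordinates of $y$ and a shifted-orthant condition on the remaining $n-m$. Since $M'$ is invertible, the map $(\nu,\mu)\mapsto y$ is a bijection, yielding the dimension formula $\dim(\CI(B_1)\cap\CI(B_2))=\dim(N)+(n-m)$, where $N:=\{\nu\ge 0:M'\nu\le 0\}$. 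Combined with the adjacency hypothesis $\dim(\CI(B_1)\cap\CI(B_2))=n-1$, this forces $\dim(N)=m-1$.

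The crux is then to show $\dim(N)\le m-2$ whenever $m\ge 3$, which will contradict the previous line. Any $\nu\in N$ satisfies $\nu_l(M'\nu)_l\le 0$ for all $l$, so column sufficiency of $M'$ forces $\nu_l(M'\nu)_l=0$ for all $l$. Thus $\nu$ lies in some set $N_Z:=\{\nu\ge 0:\nu_l=0\ \text{for}\ l\in Z,\ (M'\nu)_l=0\ \text{for}\ l\notin Z,\ M'\nu\le 0\}$ for some $Z\subseteq[m]$, and a straightforward linear-algebra count bounds $\dim(N_Z)\le(m-|Z|)-\rank(M'_{\bar Z,\bar Z})$, where $\bar Z:=[m]\setminus Z$.

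The hard part will be the case analysis showing no $N_Z$ can reach dimension $m-1$ when $m\ge 3$: the case $|Z|=0$ demands $\rank(M')\le 1$, impossible as $M'$ is invertible of size $m\ge 2$; the case $|Z|=1$ demands $M'_{\bar Z,\bar Z}=0$, which leaves the nonzero entries of $M'$ concentrated in a single row and a single column, hence $\rank(M')\le 2<m$, again contradicting invertibility; and for $|Z|\ge 2$ the bound $\dim(N_Z)\le m-|Z|\le m-2$ holds automatically. The most delicate ingredients will be justifying the bijective parametrization underlying the dimension formula and proving the rank-two bound in the $|Z|=1$ subcase.
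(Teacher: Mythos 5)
Your proposal is correct, but it takes a genuinely different route from the paper's. The paper's proof is a short counting argument: since $\dim(\CI(B_1)\cap\CI(B_2))=n-1$, one can pick $q$ in the relative interior of that intersection so that both basic solutions at $q$ have exactly $n-1$ positive components; the cross-complementarity characterization of column sufficiency (Theorem~\ref{thm:characterisation_CSu}) then forces the complements $\bar J$ of the $n-1$ positive indices $J\subseteq B_1$ of the first solution to vanish in the second solution, and since the second basic solution has only one zero component, at least $n-2$ indices of $\bar J$ are outside $B_2$, whence their complements in $J$ lie in $B_1\cap B_2$. You instead normalize by a principal pivot transform so that $\CI(B_1)$ becomes the nonnegative orthant, translate adjacency into $\dim\{\nu\ge 0 : M'\nu\le 0\}=m-1$ for the invertible $m\times m$ principal submatrix $M'$ (with $m=n-\vert B_1\cap B_2\vert$), and rule out $m\ge 3$ by applying the defining implication of column sufficiency pointwise and bounding each piece $N_Z$ by a rank count. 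This works, but it leans on two invariance facts the paper never needs (sufficiency is preserved under principal pivot transforms and under taking principal submatrices), and two steps you flagged or glossed over should be made explicit: the fiber argument giving $\dim(\CI(B_1)\cap\CI(B_2))=\dim(N)+(n-m)$ (routine, since every $\mu$-fiber over $N$ is full-dimensional and the set is convex), and the passage from ``each $N_Z$ has dimension $\le m-2$'' to ``$\dim N\le m-2$'', which requires observing that a convex set of dimension $m-1$ cannot be covered by finitely many convex sets of smaller dimension. What your approach buys is that it uses only the definition of column sufficiency rather than the two-solution characterization, and it exposes the geometry of the exchange-pivot case ($m=2$) explicitly; what the paper's approach buys is brevity, since Theorem~\ref{thm:characterisation_CSu} does all the work in three lines and no pivoting normalization or dimension bookkeeping is needed.
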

\begin{proof}
  By the definition of adjacency the intersection $\CI(B_1)\cap \CI(B_2)$ has dimension $n-1$ and therefore
  there exists a $q\in \KAPPA(M)$ that lies in the relative interior of a facet of both complementary cones,
  which means that both basic solutions $(w_1,z_1),\,(w_2,z_2)$ have exactly $n-1$ strictly positive
  components. Recall that basic solutions can be stated as:
  \begin{align*}
    \begin{bmatrix}
      w_1\\
      z_1
    \end{bmatrix}_{B_1} &=A\columnIndex{B_1}\inv q\enspace, &
    \begin{bmatrix}
      w_2\\
      z_2
    \end{bmatrix}_{B_2} &=A\columnIndex{B_2}\inv q\enspace.
  \end{align*}
  Let $J$ be a subset of $B_1$ such that $(w_1^T,z_1^T)_{J}>0$ and $\vert J\vert =n-1$. By Theorem
  \ref{thm:characterisation_CSu}, we have $(w_2^T,z_2^T)_{\bar J}=0$. Since $(w_2^T,z_2^T)_{B_2}$ has exactly
  one zero component, at least $n-2$ elements of $\bar J$ are not in $B_2$ and therefore their complements
  are.  This shows $|B_1\cap B_2 | \geq n-2$.
\end{proof}

\begin{rem}
  It can be shown for $\PI$-matrices that two bases are adjacent if and only if they differ by exactly one
  element.  This implies that the set of all complementary cones for a $\PI$-matrix LCP together with their
  faces forms a polyhedral complex.  Unfortunately, this polyhedral complex property is not satisfied in
  general for sufficient matrices, nor in fact for the proper subclass of PSD matrices.  More precisely, the
  intersection of two critical regions may not be a common face, see \cite{Jones_Morari}.
\end{rem}

\begin{lem}\label{thm:adjacency_CSU_necessary_2}
  Let $M\in\real^{n\times n}$ be sufficient and $B$ be a complementary basis.  If $B'=B\backslash\{i\}\cup
  \{\bar i\}$ is a basis then $\CI(B)$ and $\CI(B')$ intersect in their common facet
  $\CI(B\backslash\{i\})$. Moreover no other full-dimensional complementary cones intersect the relative
  interior of $\CI(B\backslash\{i\})$, i.e. $\CI(B')$ is the unique complementary cone adjacent to $\CI(B)$
  along this facet.
\end{lem}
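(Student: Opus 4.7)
My plan is to split the statement into two claims: (a) $\CI(B)\cap\CI(B')=\CI(B\setminus\{i\})$, and (b) no other full-dimensional complementary cone meets the relative interior of that facet. Throughout I will write $J:=B\setminus\{i\}$ and $H:=\mathrm{span}(A\columnIndex{J})$; since $B$ and $B'$ are both bases, $\{A\columnIndex{j}\}_{j\in J}$ is linearly independent, $H$ is an $(n-1)$-dimensional subspace, and neither $A\columnIndex{i}$ nor $A\columnIndex{\bar i}$ lies in $H$ (otherwise $B$ or $B'$ would fail to have full rank). Consequently $\CI(J)$ is a simplicial cone of dimension $n-1$ that is a facet of both $\CI(B)$ and $\CI(B')$.

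For (a), the key observation is that $A\columnIndex{i}$ and $A\columnIndex{\bar i}$ must lie on opposite open sides of $H$. If they lay on the same side, then around any point $y\in\relINT(\CI(J))$ one could find an open neighborhood contained in both $\CI(B)$ and $\CI(B')$, contradicting the disjointness of their relative interiors given by Proposition~\ref{prop:disjointCC}. Once the two columns sit on opposite sides, $\CI(B)$ and $\CI(B')$ occupy opposite closed half-spaces cut by $H$, so their intersection lies in $H$; but $H\cap\CI(B)=\CI(J)$, because any nonzero coefficient on $A\columnIndex{i}$ would push the combination out of $H$, and this gives $\CI(B)\cap\CI(B')=\CI(J)$.

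For (b), I would take an arbitrary full-dimensional complementary basis $B''$ with $\CI(B'')\cap\relINT(\CI(J))\neq\emptyset$, pick a point $q$ in the intersection, and show $B''\in\{B,B'\}$. Writing $q=\sum_{j\in J}\mu_j A\columnIndex{j}$ with all $\mu_j>0$ (the defining property of $\relINT(\CI(J))$) and $q=\sum_{k\in B''}\lambda_k A\columnIndex{k}$ with $\lambda_k\geq 0$, the $B$-basic solution at $q$ is positive precisely on $J$, so Theorem~\ref{thm:characterisation_CSu} for sufficient matrices forces $\lambda_{\bar j}=0$ for every $\bar j\in B''\cap\bar J$. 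I would then project the $B''$-expansion onto any one-dimensional complement of $H$: since $q\in H$ its projection vanishes, columns $A\columnIndex{j}$ with $j\in J$ project to zero, and the $\bar J$-coefficients are already zero, so the only surviving term is $\lambda_k A\columnIndex{k}$ for $k\in B''\cap\{i,\bar i\}$. Because such a column has nonzero projection, the corresponding $\lambda_k$ must also vanish. Hence $q$ is expressible using columns from $J\cap B''$ alone; comparing with the unique positive expansion $\sum_{j\in J}\mu_j A\columnIndex{j}$ in the linearly independent family $\{A\columnIndex{j}\}_{j\in J}$ forces $J\subseteq B''$. Complementarity together with $|B''|=n$ then leaves only $B''=B$ or $B''=B'$.

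I expect the main obstacle to be this last part of (b): correctly combining the combinatorial information from Theorem~\ref{thm:characterisation_CSu} with the linear-algebraic projection in order to conclude $J\subseteq B''$. The opposite-sides argument in (a) is an almost immediate consequence of Proposition~\ref{prop:disjointCC}, so the real substance lies in identifying $B''$ among all complementary bases meeting the facet.
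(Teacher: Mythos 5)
Your proof is correct, but its substantive half, claim (b), takes a genuinely different route from the paper. The paper's own proof is two sentences: the facet $\CI(B\setminus\{i\})$ is common simply because $B\setminus\{i\}\subseteq B\cap B'$, and the uniqueness claim ``follows directly'' from Proposition~\ref{prop:disjointCC} --- any other full-dimensional complementary cone touching $\relINT(\CI(B\setminus\{i\}))$ would have its interior meet $\INT(\CI(B))$ or $\INT(\CI(B'))$, which together cover a neighborhood of that relative interior, contradicting disjointness. You instead identify the intruding basis $B''$ algebraically: cross-complementarity for column sufficient matrices (Theorem~\ref{thm:characterisation_CSu}) kills the coefficients on $\bar J$, projection along $H=\mathrm{span}(A\columnIndex{J})$ kills the coefficient on $B''\cap\{i,\bar i\}$, and uniqueness of the representation in the independent family $\{A\columnIndex{j}\}_{j\in J}$ forces $J\subseteq B''$, hence $B''\in\{B,B'\}$. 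This is essentially the toolkit the paper uses for the neighboring Lemma~\ref{thm:adjacency_CSU_necessary}, and it buys an explicit, self-contained argument, plus the exact equality $\CI(B)\cap\CI(B')=\CI(B\setminus\{i\})$ in (a), which the paper never spells out. What it costs is a little generality: the paper's interior-disjointness argument works for all fully semimonotone matrices (as remarked after Proposition~\ref{prop:adjacencyCC}), whereas yours invokes column sufficiency through Theorem~\ref{thm:characterisation_CSu}. One phrasing in (a) should be tightened: if $A\columnIndex{i}$ and $A\columnIndex{\bar i}$ lay on the same side of $H$, there is still no open neighborhood of a point $y\in\relINT(\CI(B\setminus\{i\}))$ contained in $\CI(B)$, since such $y$ is a boundary point of the cone; the correct statement is that the two \emph{interiors} would then intersect arbitrarily close to $y$ (write $A\columnIndex{\bar i}=h+cA\columnIndex{i}$ with $h\in H$ and $c>0$, and perturb $y$ slightly), which contradicts Proposition~\ref{prop:disjointCC} just the same, so the gap is purely one of wording.
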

\begin{proof}
  Since $B\backslash\{i\}$ is a subset of $B$ and $B'$, $\CI(B\backslash\{i\})$ is a common facet of $\CI(B)$
  and $\CI(B')$. The second statement follows directly from the fact that the interior of any other
  complementary cone can intersect neither $ \CI(B)$ nor $\CI(B')$, since $M$ is sufficient.
\end{proof}

\begin{rem}\label{rem:bounded_adj_cones}
  Lemmas~\ref{thm:adjacency_CSU_necessary} and~\ref{thm:adjacency_CSU_necessary_2} imply that a complementary
  basis has at most $n+\frac{n^2-n}{2}$ adjacent complementary bases.
\end{rem}
Given a complementary basis $B$ one can see that replacing any index $i\in B$ with its complement $\bar i$
preserves complementarity, i.e. $B\backslash\{i\}\cup\{\bar i\}$ is still a complementary set. This operation
is called a \emph{diagonal pivot}. If we substitute two different indices $i,j\in B$ with their complements,
then the operation is called an \emph{exchange pivot}.  Lemma~\ref{thm:adjacency_CSU_necessary} ensures that
for a given basis $B$ we can reach all adjacent bases by a single diagonal pivot or by a single exchange pivot
operation. However, for some $i\in B$ the set $B\cup\{\bar i\}\backslash\{i\}$ may not be a basis, or for some
pair $(i,j)\in B$ the basis $B\cup\{\bar i, \bar j\}\backslash\{i,j\}$ may not be adjacent to $B$. Therefore,
in order to determine whether a set given by a diagonal or an exchange pivot is in fact an adjacent feasible
basis we need a further condition. Such a condition can be easily derived from the \emph{dictionary} of the
basis $B$.

\begin{defi}
  Given a complementary basis $B$ and its complement $N$ the matrix $D:=-A\columnIndex{B}\inv
  A\columnIndex{N}\in \real^{B\times N}$ is called the \emph{dictionary} of $B$.
\end{defi}
We begin by examining the diagonal pivot, for which a well-known adjacency condition can be derived.

\begin{fact}
  If $B$ is a complementary basis, then for any $i\in B$, the set $B\backslash\{i\}\cup \{\bar i\}$ is a basis
  if and only if $D_{i,\bar i}\neq0$.
\end{fact}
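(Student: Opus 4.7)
The plan is to use the standard simplex-style change-of-basis identity, which reduces the rank question for $A\columnIndex{B'}$ to a single determinant computation. Since $B$ is a complementary basis, $i\in B$ implies $\bar i\notin B$, so $\bar i$ lies in $N$ and hence the entry $D_{i,\bar i}=(-A\columnIndex{B}^{-1}A\columnIndex{N})_{i,\bar i}$ is well defined.

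First I would express $A\columnIndex{\bar i}$ in the basis $A\columnIndex{B}$. From the definition of the dictionary, $A\columnIndex{N}=-A\columnIndex{B}D$, so in particular
\begin{equation*}
A\columnIndex{\bar i} \;=\; -A\columnIndex{B}\, D\columnIndex{\bar i}.
\end{equation*}
Then I would write $A\columnIndex{B'}$, where $B'=B\setminus\{i\}\cup\{\bar i\}$, as $A\columnIndex{B'}=A\columnIndex{B}\,P$, with $P\in\real^{n\times n}$ obtained from the identity matrix by replacing its $i$-th column by $-D\columnIndex{\bar i}$. The columns of $B'$ other than $\bar i$ coincide with columns of $B$, giving identity columns in $P$, and the column corresponding to $\bar i$ picks out $-D\columnIndex{\bar i}$.

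The key step is the determinant calculation: expanding $\det(P)$ along the $i$-th column, every other column of $P$ is a standard basis vector, so only the diagonal entry $P_{ii}=-D_{i,\bar i}$ survives, giving $\det(P)=-D_{i,\bar i}$. Therefore
\begin{equation*}
\det\bigl(A\columnIndex{B'}\bigr) \;=\; \det\bigl(A\columnIndex{B}\bigr)\cdot\det(P) \;=\; -D_{i,\bar i}\,\det\bigl(A\columnIndex{B}\bigr).
\end{equation*}
Since $B$ is a basis, $\det(A\columnIndex{B})\neq 0$, and we conclude that $A\columnIndex{B'}$ has rank $n$ if and only if $D_{i,\bar i}\neq 0$, which is the desired equivalence.

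There is no real obstacle here; the result is the textbook single-pivot rule for dictionaries, and the only care needed is to write $A\columnIndex{B'}$ as a product $A\columnIndex{B}\,P$ with the right permutation of columns so that the cofactor expansion cleanly isolates $D_{i,\bar i}$.
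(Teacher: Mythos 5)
The paper states this as a \textbf{Fact} without supplying a proof, treating it as the well-known single-pivot criterion from simplex/dictionary theory, so there is nothing in the paper to compare against. Your argument is correct and is exactly the standard derivation: writing $A\columnIndex{B'}=A\columnIndex{B}P$ with $P$ the identity except that column $i$ is replaced by $-D\columnIndex{\bar i}$, observing that in the cofactor expansion of $\det(P)$ along column $i$ every minor with a deleted row $k\neq i$ contains a zero column (the truncated $e_k$), so only the $(i,i)$ term survives, giving $\det(P)=-D_{i\bar i}$ and hence $\det(A\columnIndex{B'})=-D_{i\bar i}\det(A\columnIndex{B})$, which is nonzero iff $D_{i\bar i}\neq 0$.
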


We now consider the exchange pivot and derive necessary and sufficient conditions for adjacency, which are
again based on examining elements of the dictionary.

\begin{prop}\label{prop:adjacencyCC}
  Let $M\in\real^{n\times n}$ be a sufficient matrix, $B$ be a complementary basis and $D$ be its
  dictionary. Consider the complementary basis $B^\prime=B\backslash\{i,j\}\cup\{\bar i,\bar j\}$, where
  $i,j\in B$ are distinct. The following condition holds:
  \begin{eqnarray}
    dim(\CI(B\backslash\{i\})\cap \CI(B^\prime))=n-1 \iff D_{i \bar i}=0 \text{ and } D_{j \bar i}<0.
  \end{eqnarray}
\end{prop}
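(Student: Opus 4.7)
My plan is to translate the dimension condition into explicit (in)equalities on the coefficients $c \geq 0$ in the representation $v = \sum_{r \in B'} c_r A\columnIndex{r}$ of a point in $\CI(B')$, and to combine this with the uniqueness statement of Lemma~\ref{thm:adjacency_CSU_necessary_2}. Using $\beta := A\columnIndex{B}\inv$, the condition $v \in \CI(B \setminus \{i\})$ becomes $\beta v \geq 0$ together with $(\beta v)_i = 0$. Since $\beta A\columnIndex{s} = e_s$ for $s \in B$ and $\beta A\columnIndex{\bar{i}} = -D\columnIndex{\bar{i}}$, $\beta A\columnIndex{\bar{j}} = -D\columnIndex{\bar{j}}$, this system reduces to: the equality $c_{\bar{i}} D_{i \bar{i}} + c_{\bar{j}} D_{i \bar{j}} = 0$, the inequality $c_{\bar{i}} D_{j \bar{i}} + c_{\bar{j}} D_{j \bar{j}} \leq 0$, the sign conditions $c_s \geq c_{\bar{i}} D_{s \bar{i}} + c_{\bar{j}} D_{s \bar{j}}$ for $s \in B \setminus \{i,j\}$, and $c \geq 0$ throughout. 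Because the parameterizing map from $c$ to $v$ is injective (its matrix is $A\columnIndex{B'}$, which is invertible), the dimension of the intersection equals the dimension of the admissible $c$-set.

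For the forward direction I would argue in two steps. First, $D_{i \bar i} = 0$ is necessary: if instead $D_{i \bar i} \neq 0$, then by the Fact just before the proposition, $B'' := B \setminus \{i\} \cup \{\bar i\}$ is a basis, and Lemma~\ref{thm:adjacency_CSU_necessary_2} identifies $\CI(B'')$ as the \emph{unique} full-dimensional complementary cone meeting $\relINT \CI(B \setminus \{i\})$. Since $B' \neq B''$ (they differ in the index $j$), $\CI(B') \cap \CI(B \setminus \{i\})$ must lie in the relative boundary of the $(n-1)$-dimensional cone $\CI(B \setminus \{i\})$, hence has dimension at most $n-2$, contradiction. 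Second, $D_{j \bar i} < 0$ is necessary: the basis property for $B'$ forces the $2 \times 2$ minor $\Delta := D_{i \bar i} D_{j \bar j} - D_{i \bar j} D_{j \bar i}$ to be nonzero, which combined with $D_{i \bar i} = 0$ gives $D_{i \bar j} \neq 0$ and $D_{j \bar i} \neq 0$. The equality then collapses to $c_{\bar j} = 0$, and the remaining inequality reads $c_{\bar i} D_{j \bar i} \leq 0$; if $D_{j \bar i} > 0$ this forces $c_{\bar i} = 0$, leaving only the $n-2$ parameters $(c_s)_{s \in B \setminus \{i,j\}}$ and giving dimension at most $n-2$, contradicting the hypothesis.

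For the backward direction, assuming $D_{i \bar i} = 0$ and $D_{j \bar i} < 0$, I would exhibit an $(n-1)$-parameter family of admissible $c$: set $c_{\bar j} = 0$, let $c_{\bar i} \geq 0$ vary freely, and take any $c_s \geq \max(0, c_{\bar i} D_{s \bar i})$ for $s \in B \setminus \{i,j\}$. All translated constraints hold, and linear independence of $A\columnIndex{\bar i}$ together with $\{A\columnIndex{s}\}_{s \in B \setminus \{i,j\}}$ follows because $A\columnIndex{\bar i}$ has a nonzero $A\columnIndex{j}$-coefficient equal to $-D_{j \bar i}$; so these $n-1$ free parameters sweep out a set of dimension $n-1$. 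The main obstacle is conceptual rather than computational: recognizing that the case $D_{i \bar i} \neq 0$ is best dispatched via the uniqueness from Lemma~\ref{thm:adjacency_CSU_necessary_2} rather than through a direct determinant/sign analysis, which would otherwise require additional sufficient-matrix-specific restrictions on the entries of $D$.
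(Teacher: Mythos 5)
Your proof is correct and rests on the same core idea as the paper's: express a generic point of the common facet in the coordinates of the other complementary basis and analyze the sign of the single crucial coefficient, which is $\pm D_{j\bar i}$. The main difference is one of direction. The paper parametrizes a point $q(\lambda)=\sum_{k\in B\setminus\{i\}}\lambda_k A_{\cdot k}$ of $\CI(B\setminus\{i\})$ and rewrites it in $B'$-coordinates, reading off the sign of $\alpha_j=-D_{j\bar i}$; you instead start from a point $v=\sum_{r\in B'}c_r A_{\cdot r}$ of $\CI(B')$, pull back via $\beta=A_{\cdot B}^{-1}$, and analyze the resulting system in $c$. These are mirror images of the same calculation.

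Where your writeup is genuinely tighter is on two points that the paper glosses over. First, the paper's key identity $A_{\cdot\bar i}=\sum_{k\in B\setminus\{i\}}\alpha_k A_{\cdot k}$ already presupposes $D_{i\bar i}=0$; the paper never establishes that $D_{i\bar i}=0$ is necessary for the stated adjacency. You supply this cleanly by invoking the Fact together with Lemma~\ref{thm:adjacency_CSU_necessary_2}: if $D_{i\bar i}\neq 0$ then the diagonal-pivot cone is the unique one meeting $\relINT\CI(B\setminus\{i\})$, and it is distinct from $\CI(B')$, so $\dim(\CI(B\setminus\{i\})\cap\CI(B'))\le n-2$. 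Second, the paper's ``Sufficiency'' step is a one-line assertion; you make the dimension count explicit by exhibiting the $(n-1)$-parameter family $c_{\bar j}=0$, $c_{\bar i}\ge 0$, $c_s\ge\max(0,c_{\bar i}D_{s\bar i})$ and checking that $A_{\cdot\bar i}$ and $\{A_{\cdot s}\}_{s\in B\setminus\{i,j\}}$ are linearly independent (via the nonzero $A_{\cdot j}$-coefficient $-D_{j\bar i}$). Your use of the $2\times 2$ minor $D_{i\bar i}D_{j\bar j}-D_{i\bar j}D_{j\bar i}\neq 0$ to derive $D_{i\bar j}\neq 0$ and $D_{j\bar i}\neq 0$ is also a clean substitute for the paper's unexplained assertion that $\alpha_j\neq 0$. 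In short: correct, same strategy, but with two gaps of the published proof filled.
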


\begin{proof}
  Define $\alpha_k:=-D_{k\bar i}$, then the following holds:
  \begin{equation}\label{eq:coeff_adjacencyCC}
    A\columnIndex{\bar i}=\sum_{k\in B\backslash \{i\}}\alpha_k A\columnIndex{k},
  \end{equation}
  Let $j\in B\backslash\{i\}$, since $\alpha_j\neq 0$ (we have assumed $B^\prime$ to be a basis) we can
  rewrite (\ref{eq:coeff_adjacencyCC}) as
  \begin{equation}\label{eq:coeff_adjacencyCC_respect_j}
    A\columnIndex{j}=\frac{1}{\alpha_j}\left(A\columnIndex{\bar i}-\sum_{k\in B\backslash\{i,j\}} \alpha_k A\columnIndex{k}\right)\enspace.
  \end{equation}
  Let us consider
  \begin{equation}\label{eq:def_of_q_lambda}
    q(\lambda)=\sum_{k\in B\backslash\{i\}} \lambda_k A\columnIndex{k}\enspace,
  \end{equation}
  which lies in the relative interior of $\CI(B\backslash\{i\})$ if and only if $\lambda_k>0$ for all $k$.

  We can express $q(\lambda)$ in following way by substituting (\ref{eq:coeff_adjacencyCC_respect_j}) in
  (\ref{eq:def_of_q_lambda}):
  \begin{eqnarray}
    q(\lambda)=\sum_{k\in B\backslash\{i,j\}}\lambda_k A\columnIndex{k}+\lambda_j(1/\alpha_j A\columnIndex{\bar i}-\sum_{k\in B\backslash\{i,j\}} \alpha_k/\alpha_j A\columnIndex{k})\\ 
    =\sum_{k\in B\backslash\{i,j\}}(\lambda_k-\alpha_k/\alpha_j ) A\columnIndex{k}+\lambda_j/\alpha_j A\columnIndex{\bar i}\label{eq:q_lambda_lin_comb_B2}
  \end{eqnarray}
  Sufficiency: if $\alpha_j>0$ then there exists a $q(\lambda)$ that lies in the relative interior of both facets $\CI(B\backslash\{i\})$ and $\CI(B^\prime\backslash\{\bar j\})$.\\
  Necessity: since $B^\prime$ is a basis the unique way to express $q(\lambda)$ as a linear combination of the
  vector indexed by $B^\prime$ is (\ref{eq:q_lambda_lin_comb_B2}). If $\alpha_j<0$ any $q (\lambda)\in
  \relINT(\CI(B\backslash\{i\}))$ can not lie in $\CI(B^\prime)$. The case $\alpha_j=0$ is impossible since we
  have assumed $B^\prime$ to be a basis.
\end{proof}
Corollary~\ref{cor:No_adj_cone} follows directly from the proposition above and allows the detection of the
boundaries of the complementary range.
\begin{cor}\label{cor:No_adj_cone}
  Let $M\in \real^{n\times n}$ be sufficient, $B$ be a complementary basis and denote $A\columnIndex{B}\inv$
  as $\beta$. Consider the facet $\CI(B\backslash \{i\})$, for any $i\in B$. The hyperplane
  $\aff(\CI(B\backslash \{i\}))=\{y\in\real^n\,\vert\, \beta\rowIndex{i}y=0\}$ defines a facet of the
  complementary range $\KAPPA(M)$ if and only if $D_{i \bar i}=0$ and $D\rowIndex{i}\geq \mbzero$.
\end{cor}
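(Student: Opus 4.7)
The plan is to recognize that the hyperplane $H := \{y\in\real^n : \beta\rowIndex{i} y = 0\}$ is the affine hull of a facet of the convex cone $\KAPPA(M)$ iff $H$ supports $\KAPPA(M)$ on one side, and then to use the adjacency analyses of Lemma~\ref{thm:adjacency_CSU_necessary_2} and Proposition~\ref{prop:adjacencyCC} to translate this support condition into the stated dictionary condition.

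First I would observe that $\CI(B)=\{y:\beta y\geq \mbzero\}\subseteq\{y:\beta\rowIndex{i} y\geq 0\}$, so $\CI(B)$ lies in the closed half-space $\{\beta\rowIndex{i} y\geq 0\}$, while $\CI(B\setminus\{i\})\subseteq H\cap\KAPPA(M)$ already has dimension $n-1$. Hence $H$ is the affine hull of a facet of $\KAPPA(M)$ iff $H$ supports $\KAPPA(M)$, equivalently iff no complementary cone $\CI(B')$ contains a point of the open half-space $\{\beta\rowIndex{i} y<0\}$. Any such offending cone $\CI(B')$ must in fact be adjacent to $\CI(B)$ across the specific facet $\CI(B\setminus\{i\})$: by Proposition~\ref{prop:disjointCC} the relative interiors of distinct complementary cones are disjoint, and by Proposition~\ref{prop:convex} the union $\KAPPA(M)$ is a convex cone, so any $\CI(B')$ with interior on the $\{\beta\rowIndex{i} y<0\}$ side must share an $(n-1)$-dimensional piece of boundary with $\CI(B)$ lying in $H$, and that piece is contained in the unique facet $\CI(B\setminus\{i\})$ of $\CI(B)$ which lies in $H$. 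Lemma~\ref{thm:adjacency_CSU_necessary} then restricts $B'$ to differ from $B$ in at most two indices, leaving only diagonal and exchange pivots as candidates.

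Finally, I would enumerate the two pivot cases using results already established. The diagonal pivot $B\setminus\{i\}\cup\{\bar i\}$ is a basis iff $D_{i\bar i}\ne 0$ (by the preceding Fact), in which case Lemma~\ref{thm:adjacency_CSU_necessary_2} says it is automatically adjacent to $\CI(B)$ across $\CI(B\setminus\{i\})$; its absence therefore forces $D_{i\bar i}=0$. Assuming $D_{i\bar i}=0$, Proposition~\ref{prop:adjacencyCC} says that the exchange pivot $B\setminus\{i,j\}\cup\{\bar i,\bar j\}$ is adjacent across $\CI(B\setminus\{i\})$ iff $D_{j\bar i}<0$; ruling this out for every $j\in B\setminus\{i\}$ yields exactly the stated nonnegativity condition on the relevant entries of $D$. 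Combining both constraints delivers the corollary. The main obstacle will be the geometric reduction sketched above: since, as noted in the Remark after Lemma~\ref{thm:adjacency_CSU_necessary}, the complementary cones of a sufficient matrix need not form a polyhedral complex, one has to invoke Proposition~\ref{prop:disjointCC} carefully in order to rule out cones that do not share an exact $(n-1)$-dimensional face with $\CI(B)$ and yet still carry $\KAPPA(M)$ past $H$.
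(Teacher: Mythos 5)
Your overall plan matches the paper's intent (the paper dismisses this as ``follows directly from the proposition above''), and your geometric reduction is the right one: since $\CI(B)\subseteq\{\beta\rowIndex{i}y\geq 0\}$ and $\CI(B\setminus\{i\})$ is already $(n-1)$-dimensional, the hyperplane is a facet-defining hyperplane of $\KAPPA(M)$ precisely when no complementary cone reaches into $\{\beta\rowIndex{i}y<0\}$; by convexity of $\KAPPA(M)$ (Proposition~\ref{prop:convex}), disjointness of interiors (Proposition~\ref{prop:disjointCC}) and Lemma~\ref{thm:adjacency_CSU_necessary}, any offending cone must arise from a diagonal or exchange pivot at $i$, whose (non)existence is read off the dictionary via the Fact, Lemma~\ref{thm:adjacency_CSU_necessary_2} and Proposition~\ref{prop:adjacencyCC}. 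Your caution about the cones not forming a complex is also well placed.

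However, the final identification is where a real gap appears, and you glide past it. Ruling out the diagonal pivot gives $D_{i\bar i}=0$; ruling out every exchange pivot via Proposition~\ref{prop:adjacencyCC} gives $D_{j\bar i}\geq 0$ for all $j\in B$, which is a nonnegativity condition on \emph{column} $\bar i$ of $D$. The corollary instead asserts $D\rowIndex{i}\geq\mbzero$, i.e.\ nonnegativity of \emph{row} $i$ (the entries $D_{i\bar j}$), and these are not the same set of entries. Your sentence ``yields exactly the stated nonnegativity condition on the relevant entries of $D$'' therefore conflates two different conditions, and for sufficient (even $\PI_0$) matrices with $D_{i\bar i}=0$ one only gets $D_{i\bar j}D_{j\bar i}\le 0$, which does not make the two conditions equivalent. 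Concretely, take $M=\begin{pmatrix}0&1\\-1&1\end{pmatrix}$ (sufficient), $B=\{1,2\}$, $i=1$: then $D=M$, $D_{1\bar 1}=0$ and $D\rowIndex{1}=(0,1)\geq\mbzero$, yet $D_{2\bar 1}=-1<0$ and indeed $\KAPPA(M)=\real^2$ has no facets, so the row criterion gives the wrong answer while the column criterion $D_{\cdot\bar i}\geq\mbzero$ gives the right one. You should have noticed this mismatch rather than asserting the derived condition equals the stated one; the argument you carried out actually proves the corollary with $D_{j\bar i}\geq 0$ for all $j\in B$ in place of $D\rowIndex{i}\geq\mbzero$.
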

\begin{rem}
  Lemma~\ref{thm:adjacency_CSU_necessary} and Proposition~\ref{prop:adjacencyCC} are valid also for column
  sufficient matrices (i.e. it is not necessary that $M$ is row
  sufficient). Lemma~\ref{thm:adjacency_CSU_necessary_2} holds for all fully semimonotone matrices.
\end{rem}

\subsection{Critical Domains}\label{sec:critical domains}
We study now the parametric case where the right hand side of LCP~\eqref{eq:Def1LCP} is restricted to lie
within some affine subspace $S := \{q+Q\theta\,\vert\, \theta\in\real^d\}$. We will see, under some
assumptions on $S$, that the properties of the complementary cones discussed in the previous section still
hold in this case.

\begin{defi}
  If $B$ is a complementary basis, then the \emph{critical domain} $S_B$ is the intersection of the affine
  subspace $S$ with the complementary cone $\CI(B)$
  \begin{equation}\label{eq:def_CriticalDomain}
    S_B:=\CI(B)\cap S=\{y\,\vert\, A\columnIndex{B}\inv y\geq 0,\, y=Q\theta+q ,\,\theta\in\real^d \}\enspace.
  \end{equation}
\end{defi}

Since we have assumed $Q$ to be full column rank, the parametrisation $q+Q\theta$ is an invertible function
and it is not hard to see that a critical domain is therefore the image of a critical region, i.e. $S_B=Q
\ERRE_B+q$.  Since the parametrisation is a bijection, $S_B$ and $\ERRE_B$ have the same combinatorial
structure for any complementary basis $B$. In particular, we have:
  
\begin{rem} \label{rem:affinebijection} The inequality $\beta\rowIndex{i} y \geq 0$ is redundant in $S_B$ if
  and only if $\beta\rowIndex{i} (Q \theta + q) \geq 0$ is redundant in $\ERRE_B$, where
  $\beta=A\columnIndex{B}\inv$.
\end{rem}

We now define a key assumption, which will allow the extension of the properties of complementary cones to
critical domains.
\begin{defi}
  The affine subspace $S$ is said to lie in \emph{general position} if for every complementary basis $B$ the
  following condition holds
  \begin{equation}
    S \text{ intersects }\CI(B)\Rightarrow S \text{ intersects }\operatorname{int}(\CI(B))\enspace.
  \end{equation}
\end{defi}

If a critical domain has dimension $d=\dim(S)$, we simply say that it is \emph{full-dimensional}.  By the
definition above, we have:

\begin{rem}
  If $S$ lies in a general position, then every critical domain is either full-dimensional or empty.
\end{rem}
\begin{prop}\label{prop:disjointCD}
  If $M$ is sufficient and $S$ lies in general position, then the relative interiors of critical domains
  $S_{B_1}$ and $S_{B_2}$ are disjoint for any two distinct complementary bases $B_1$ and $B_2$.
\end{prop}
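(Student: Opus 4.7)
My plan is to argue by contradiction: suppose $y \in \relINT(S_{B_1}) \cap \relINT(S_{B_2})$ with $B_1 \neq B_2$, and derive that $y$ also lies in $\operatorname{int}(\CI(B_1)) \cap \operatorname{int}(\CI(B_2))$, which contradicts Proposition~\ref{prop:disjointCC}. So the burden of the proof is really to promote a point in the relative interior of a critical domain (an object living in the $d$-dimensional affine subspace $S$) to a point in the $n$-dimensional interior of the ambient complementary cone, and it is exactly here that the general position hypothesis is used.

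Concretely, first I observe that since $S_{B_1}, S_{B_2}$ are nonempty, the general position assumption implies $S$ meets $\operatorname{int}(\CI(B_i))$ for $i=1,2$, hence both $S_{B_i}$ are full-dimensional (their relative interiors are relatively open subsets of $S$). In particular, $\aff(S_{B_i}) = S$, so $\relINT(S_{B_i})$ is open in $S$, and I can pick a $d$-dimensional ball $U \subseteq S$ around $y$ lying in $S_{B_1}\cap S_{B_2}$.

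The heart of the argument is the following claim: $y \in \operatorname{int}(\CI(B_1))$. Writing $\beta := A\columnIndex{B_1}\inv$ so that $\CI(B_1) = \{y \in \real^n : \beta y \geq 0\}$ and $\operatorname{int}(\CI(B_1)) = \{y : \beta y > 0\}$, I assume for contradiction that some row satisfies $\beta\rowIndex{k}\, y = 0$. Because $U$ is an open neighborhood of $y$ inside $S$ on which the affine function $\beta\rowIndex{k}$ is nonnegative and vanishes at $y$, linearity forces $\beta\rowIndex{k}$ to vanish identically on the affine subspace $S$. But then $S$ lies entirely in the hyperplane $\{\beta\rowIndex{k} = 0\}$, so $S$ cannot meet $\operatorname{int}(\CI(B_1))$, contradicting general position together with $S_{B_1} \neq \emptyset$. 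The same argument, applied to $A\columnIndex{B_2}\inv$, gives $y \in \operatorname{int}(\CI(B_2))$, and Proposition~\ref{prop:disjointCC} yields the final contradiction.

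The main obstacle I expect is carefully handling the mismatch between the dimension of $S$ (where the relative interior is taken) and the ambient dimension $n$ of the cones: one must be careful not to confuse interior-in-$S$ with interior-in-$\real^n$, and the general position assumption is precisely what rules out the pathological case where $S$ slides along a facet of $\CI(B)$, making $\relINT(S_B)$ larger than $S \cap \operatorname{int}(\CI(B))$. Once this dimension-bridging step is secured, the conclusion is an immediate appeal to Proposition~\ref{prop:disjointCC}.
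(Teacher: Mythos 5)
Your argument is correct and is essentially the paper's proof: both reduce the claim to Proposition~\ref{prop:disjointCC} via the inclusion $\relINT(S_{B_i})\subseteq \INT(\CI(B_i))$. The only difference is that the paper asserts this inclusion without comment, while you justify it explicitly (the affine function $\beta\rowIndex{k}$ vanishing at a point of a $d$-ball in $S$ on which it is nonnegative must vanish on all of $S$, contradicting general position), which is a worthwhile elaboration of the same route.
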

\begin{proof}
  The statement is a direct consequence of Proposition~\ref{prop:disjointCC}, i.e. $\INT(\CI(B_1))$ and
  $\INT(\CI(B_2))$ are disjoint, and from $\relINT(S_{B_i})\subseteq \INT(\CI(B_i))$ for $i=1,2$.
\end{proof}
We denote the set of the feasible points of $S$ by $S_f$.  By (\ref{eq:compl_range}) it follows that
$S_f=S\cap \KAPPA(M)$.
\begin{cor}\label{cor:Sf is convex}
  If $M\in\real^{n\times n}$ is sufficient then $S_f$ is a convex polyhedron.
\end{cor}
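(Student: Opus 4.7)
The plan is to observe that $S_f$ is expressed as an intersection $S \cap \KAPPA(M)$ of two convex polyhedra, and the intersection of convex polyhedra is a convex polyhedron.

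More concretely, I would first recall from the comment just before the corollary that $S_f = S \cap \KAPPA(M)$, which follows from (\ref{eq:compl_range}) since a parameter $\theta$ is feasible exactly when the corresponding right hand side $q + Q\theta$ lies in the complementary range. The affine subspace $S = \{q + Q\theta : \theta \in \real^d\}$ is itself a convex polyhedron (it is described by finitely many linear equalities once one eliminates $\theta$, or equivalently it is an affine image of $\real^d$). By Proposition \ref{prop:convex}, the hypothesis that $M$ is sufficient (hence in $\QU_0$) implies that $\KAPPA(M) = \cone([I\ -M])$ is a convex polyhedral cone.

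Since both $S$ and $\KAPPA(M)$ are convex polyhedra, their intersection $S_f$ is also a convex polyhedron, which is what we had to show. Strictly speaking one should note that $S_f$ is more naturally viewed as a subset of parameter space $\real^d$ via the bijection $\theta \mapsto q + Q\theta$; since $Q$ has full column rank this parametrisation is an affine isomorphism onto $S$, so the preimage of $S \cap \KAPPA(M)$ under this map is again a convex polyhedron in $\real^d$.

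There is no real obstacle here: the corollary is a direct consequence of Proposition \ref{prop:convex} together with the elementary fact that intersections of convex polyhedra are convex polyhedra. The only mild subtlety is making explicit that the statement is meant in parameter space (or equivalently in the ambient space containing $S$), but this follows from Remark \ref{rem:affinebijection}-style reasoning about the bijection between $S$ and $\real^d$.
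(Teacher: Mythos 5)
Your proof is correct and follows exactly the paper's route: the paper also deduces the corollary directly from Proposition~\ref{prop:convex} (sufficiency gives $\KAPPA(M)=\cone([I\ \ {-M}])$ convex polyhedral), with $S_f=S\cap\KAPPA(M)$ being an intersection of convex polyhedra. Your extra remarks about passing to parameter space via the full-column-rank parametrisation are a harmless elaboration of what the paper leaves implicit.
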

\begin{proof}
  The statement is a direct consequence of Proposition \ref{prop:convex}.
\end{proof}

If $M$ is a sufficient matrix and $S$ is in general position, then Proposition~\ref{prop:disjointCD} and
Corollary~\ref{cor:Sf is convex} ensure that the set $K$ of nonempty critical domains defines a
\emph{polyhedral decomposition of $S_f$\/} in the sense that
\begin{itemize}
\item each member $P$ of $K$ is a convex polyhedron,
\item $\cup_{P \in K} P = S_f$,
\item $\dim P = d$ for all $P\in K$, and
\item $\dim(P\cap P') \le d-1$ for any two distinct members $P$ and $P'$ of $K$.
\end{itemize}
It is important to note that the set $K$ may not induce a polyhedral complex, i.e. the intersection of two
critical domains may not be a common face.  Nevertheless, because $S_f$ is convex, we can define a graph
structure of the decomposition which is connected.

\begin{defi}\label{def:graph_of_CD} 
  Let $V$ be the set of complementary bases $B$ of pLCP~\eqref{eq:pLCP_equations} such that $S_B$ is
  full-dimensional and $E$ consist of edges connecting each pair of bases in $V$ whose critical domains are
  adjacent.  The graph $\GI:=(V,E)$ is called the \emph{critical domain graph} of the
  pLCP~\eqref{eq:pLCP_equations}.
\end{defi}

As stated above, each critical domain $S_B$ is the image of a critical region $\ERRE_B$ under the affine map
$\theta\mapsto q+Q\theta$, and a similar statement can be make for the feasible sets $S_f= q+Q\Theta_f$. Since
for each complementary basis $B$ the critical domain $S_B$ and the critical region $\ERRE_B$ have the same
combinatorial structure, the critical domain graph $\GI=(V,E)$ also defines the graph of critical regions and
vice versa. In the discussion of the algorithm we will mostly consider only critical domains.

\begin{cor}\label{cor:adj_graph_connected} If $M$ is a sufficient matrix and $S$ lies in
  general position, then the graph of critical domains $\GI$ is connected.
\end{cor}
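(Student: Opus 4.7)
The plan is to argue topologically using the polyhedral decomposition $K$ of $S_f$: convexity of $S_f$ lets us join any two cells by a straight segment, and a genericity argument lets us ensure the segment crosses cell boundaries only through $(d-1)$-dimensional facets, each such crossing giving an edge of $\GI$.

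Let $B_1,B_2\in V$ be arbitrary; I want to exhibit a walk from $B_1$ to $B_2$ in $\GI$. First pick points $p_1\in\relINT(S_{B_1})$ and $p_2\in\relINT(S_{B_2})$. By Corollary~\ref{cor:Sf is convex}, $S_f$ is convex, so the segment $\sigma:=[p_1,p_2]$ lies entirely in $S_f=\bigcup_{B\in V} S_B$. By Proposition~\ref{prop:disjointCD} the relative interiors of the cells are pairwise disjoint, and because there are only finitely many complementary bases the decomposition $K$ is finite. Let $F$ denote the union of all pairwise intersections $S_B\cap S_{B'}$ of dimension at most $d-2$, taken over distinct pairs $B,B'\in V$; then $F$ is a finite union of polyhedra of dimension $\le d-2$.

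Next I perturb $p_2$ inside $\relINT(S_{B_2})$ to ensure $\sigma$ avoids $F$. The set of points $p_2'\in S_{B_2}$ for which the segment $[p_1,p_2']$ meets $F$ is the intersection of $S_{B_2}$ with the cone from $p_1$ over $F$, which has dimension at most $(d-2)+1=d-1$. Since $\dim S_{B_2}=d$, the complement is a dense open subset of $\relINT(S_{B_2})$, and a suitable $p_2$ can be chosen there. For this $\sigma$ every point lies in the relative interior of some full-dimensional cell or on a $(d-1)$-dimensional face shared by exactly two full-dimensional cells (here I use that $\sigma\cap F=\emptyset$ together with the disjoint-interiors property). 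Parameterise $\sigma$ by $t\in[0,1]$ and consider the set of $t$ where $\sigma(t)\notin\relINT(S_B)$ for the current cell $B$; this is a closed set determined by finitely many linear inequalities, so $\sigma$ breaks into finitely many sub-segments, each contained in a single full-dimensional cell, with consecutive sub-segments meeting at a single point in the relative interior of a $(d-1)$-dimensional facet of both adjacent cells. This yields a finite sequence $B_1=B^{(0)},B^{(1)},\dots,B^{(k)}=B_2$ of bases in $V$ with $B^{(\ell-1)}$ and $B^{(\ell)}$ adjacent in the sense of Definition~\ref{def:cr_adjacency} for each $\ell$, i.e.\ a walk in $\GI$.

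The main obstacle is justifying the genericity step cleanly: one must ensure simultaneously that $\sigma$ misses every lower-dimensional intersection and that wherever it does cross a boundary, exactly two full-dimensional cells meet there. Both facts rest on (i) the finiteness of $K$, which bounds the number of ``bad'' pieces one has to avoid, and (ii) Proposition~\ref{prop:disjointCD}, which guarantees that once $\sigma$ leaves $F$ any point on a common boundary of cells lies in the relative interior of some $(d-1)$-face shared by precisely two full-dimensional members of $K$. Given these two ingredients, the perturbation argument above finishes the proof.
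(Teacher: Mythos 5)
Your argument is correct and follows essentially the same route as the paper: the paper's proof simply invokes convexity of $S_f$ as implying a connecting path between any two critical domains, and your proposal fills in the standard segment-plus-genericity argument that makes this implication rigorous. No discrepancy in approach, just a more detailed write-up of the same idea.
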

\begin{proof}
  The statement follows directly from the convexity of $S_f=\KAPPA(M)\cap S$, which implies that between every
  pair of critical domains there exists a path in the graph of critical domains.
\end{proof}

In the previous section we have seen that for the case of sufficient matrices we can reach all adjacent cones
from any complementary cone with a single diagonal or a single exchange pivot operation. Assuming general
position of $S$, this useful property also holds for critical domains.
\begin{prop}\label{prop:adjcencyCD_CC}
  If $M$ is sufficient and $S$ lies in general position, then for any two complementary bases $B_1$ and $B_2$
  that have nonempty critical domains the following holds: If $S_{B_1}$ and $S_{B_2}$ are adjacent in $S$ then
  $\CI(B_1)$ and $\CI(B_2)$ are adjacent cones.
\end{prop}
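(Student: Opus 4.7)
My plan is to convert the $(d-1)$-dimensional intersection in $S$ into a common facet of the two complementary cones by producing a single carefully chosen point $p\in S_{B_1}\cap S_{B_2}$ that lies in the relative interior of a facet of $\CI(B_1)$. Once such a $p$ is in hand, Lemma~\ref{thm:adjacency_CSU_necessary_2} applies immediately: since $p\in\relINT(\CI(B_1\setminus\{i\}))$ for some $i\in B_1$ while also $p\in\CI(B_2)$, the lemma forces $\CI(B_2)=\CI(B_1\setminus\{i\}\cup\{\bar i\})$, which shares the $(n-1)$-dimensional facet $\CI(B_1\setminus\{i\})$ with $\CI(B_1)$, establishing the desired cone adjacency.

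To produce $p$, I would start with any point in $\relINT(S_{B_1}\cap S_{B_2})$; this set has dimension $d-1$ and, by Proposition~\ref{prop:disjointCD}, lies in the relative boundary of $S_{B_1}$ (inside $S$). That boundary is a finite union of $(d-1)$-dimensional facets $S_{B_1}\cap\{y:\beta\rowIndex{i}y=0\}$ with $\beta:=A\columnIndex{B_1}\inv$, and the relative boundaries of those facets form a set of dimension at most $d-2$. Hence, off a negligible subset of $\relINT(S_{B_1}\cap S_{B_2})$, the point $p$ lies in the relative interior of a unique facet $F_1$ of $S_{B_1}$, i.e.\ exactly one non-redundant $B_1$-constraint is active at $p$; the same argument applied to $B_2$ lets me also require that exactly one non-redundant $B_2$-constraint is active at $p$.

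The main obstacle is upgrading ``only one non-redundant constraint active'' to $p\in\relINT(\CI(B_1\setminus\{i\}))$, i.e.\ ruling out a second $B_1$-constraint $\beta\rowIndex{k}y=0$ being tight at $p$ by virtue of being equivalent on $S$ to the $i$-th one. If this happened the two affine forms $\beta\rowIndex{i}(q+Q\theta)$ and $\beta\rowIndex{k}(q+Q\theta)$ would be proportional on $S$ by some scalar $\mu\neq 0$. The cases $\mu\leq 0$ immediately pin $S_{B_1}$ inside the hyperplane $\{y:\beta\rowIndex{i}y=0\}$ or inside its intersection with $\{y:\beta\rowIndex{k}y=0\}$, each contradicting general position applied to $B_1$. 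The delicate case is $\mu>0$: provided both diagonal-pivot bases $B_1\setminus\{i\}\cup\{\bar i\}$ and $B_1\setminus\{k\}\cup\{\bar k\}$ actually are complementary bases (if not, Corollary~\ref{cor:No_adj_cone} places the facet on $\partial\KAPPA(M)$, so no $B_2$ with $\CI(B_2)$ on the far side can exist and the hypothesis becomes vacuous), their cones both meet $S$ on the far side of $F_1$; general position then produces two full-dimensional critical domains $S_{B'}$ and $S_{B''}$ sitting on the same side of $F_1$, whose $d$-dimensional interiors must overlap in any neighborhood of $p$, contradicting Proposition~\ref{prop:disjointCD}. With that claim secured on both the $B_1$ and $B_2$ sides, Lemma~\ref{thm:adjacency_CSU_necessary_2} closes the argument as described in the first paragraph.
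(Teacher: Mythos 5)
Your overall strategy is close to the paper's: both proofs argue by contradiction via Proposition~\ref{prop:disjointCD}, using Lemma~\ref{thm:adjacency_CSU_necessary_2} to pin down the unique cone on the far side of a facet of $\CI(B_1)$. The paper works directly with the $(d-1)$-dimensional set $S_{B_1}\cap S_{B_2}$ (it lies in a facet of $S_{B_1}$, hence in a facet of $\CI(B_1)$; the unique adjacent cone $C$ along that facet would give a full-dimensional $C\cap S$ that overlaps $\relINT(S_{B_2})$), whereas you reduce to a single carefully chosen point $p$ and try to force $p \in \relINT(\CI(B_1\setminus\{i\}))$. Your route introduces extra bookkeeping (the $\mu$-case analysis) that the paper avoids, but the logical skeleton is the same.

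There is, however, a genuine gap in your argument. You invoke Corollary~\ref{cor:No_adj_cone} to claim that if $B_1\setminus\{i\}\cup\{\bar i\}$ fails to be a basis, then the facet lies on $\partial\KAPPA(M)$ and the situation is vacuous. That corollary needs \emph{both} $D_{i\bar i}=0$ \emph{and} $D\rowIndex{i}\geq\mbzero$. When $D_{i\bar i}=0$ but $D\rowIndex{i}$ has negative entries, the diagonal pivot fails, yet $\CI(B_1\setminus\{i\})$ is \emph{not} on the boundary of $\KAPPA(M)$; instead there are exchange-pivot cones adjacent to $\CI(B_1)$ along pieces of that facet (Proposition~\ref{prop:adjacencyCC}). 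Your $\mu>0$ contradiction is never reached in that branch, and, more seriously, the concluding step of your first paragraph also breaks: Lemma~\ref{thm:adjacency_CSU_necessary_2} applies only when $B_1\setminus\{i\}\cup\{\bar i\}$ is a basis, so when $D_{i\bar i}=0$ you cannot conclude $\CI(B_2) = \CI(B_1\setminus\{i\}\cup\{\bar i\})$ — indeed that set is not even a basis, and several exchange-pivot cones may meet $\relINT(\CI(B_1\setminus\{i\}))$. You would need a separate argument (e.g., that near a generic point of the facet exactly one exchange-pivot cone sits on the far side, and that $\CI(B_2)$ must coincide with it) to cover this case. A secondary, smaller point: in the $\mu>0$ case, asserting that the interiors of $S_{B'}$ and $S_{B''}$ overlap near $p$ needs the observation that both contain the entire $(d-1)$-dimensional set $S_{B_1}\cap S_{B_2}$ (so each has a full half-ball at $p$ on the far side of $F_1$), not merely that both contain the point $p$; as written, two full-dimensional wedges sharing only a vertex on $F_1$ need not have intersecting interiors.
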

\begin{proof}
  If $S_{B_1}$ and $S_{B_2}$ are adjacent critical domains, then their intersection is contained in
  $\CI(B_1)\cap\CI(B_2)$. If $\CI(B_1)$ and $\CI(B_2)$ are not adjacent cones then there exists a
  complementary cone $C$ adjacent to $\CI(B_1)$ that contains $\CI(B_1)\cap\CI(B_2)$. In this case $C\cap S$
  would be adjacent to $S_{B_1}$ and would overlap with the relative interior of $S_{B_2}$, which is a
  contradiction of Proposition~\ref{prop:disjointCD}.
\end{proof}
Given a complementary feasible basis $B$, Proposition~\ref{prop:adjcencyCD_CC} ensures that all the critical
domains adjacent to $S_B$ can be reached by exploring complementary bases adjacent to $B$.

\section{Description of the generic algorithm}\label{section:Algo_GP}
Now we are able to present an algorithm that enumerates all complementary bases whose critical domains define
a polyhedral partition of $S_f$ with the following two sets of assumptions:
\begin{assu}[Regularity] \label{assu:regularity} The matrix $M$ is sufficient and the matrix of the
  parametrisation $Q\in \real^{n\times d}$ has full column rank.
\end{assu}

\begin{assu}[General Position] \label{assu:generalposition} The affine subspace $S:=\{q+Q\theta\,\vert\,
  \theta\in\real^d\}$ lies in general position with respect to the complementary cones relative to $M$.
\end{assu}

Assumption \ref{assu:regularity} is essential for our algorithm to work, whereas Assumption
\ref{assu:generalposition} will be relaxed in the next section where an extension of the algorithm simulating
general position for any given affine subspace via a symbolic perturbation is presented.

The proposed algorithm given in Algorithm~\ref{alg:graph_search} is based on a standard graph search
procedure.  It assumes a given function $\textit{neighbors}(B)$, which returns all bases whose critical
domains are adjacent to that of a given basis $B$.  The validity follows immediately from the connectivity of
the critical domain graph, Corollary \ref{cor:adj_graph_connected}.  As input it takes a matrix $M$ and an
affine subspace $S$ that satisfy the above assumptions, as well as an initial feasible complementary basis
$B_0$ such that $S_{B_0}$ is full-dimensional. The basis $B_0$ is flagged as ``unexplored'' and added to the
set of discovered bases $\BI$. In each iteration of the algorithm an unexplored basis $B$ is selected from
$\BI$, marked as ``explored'' and all bases that have adjacent critical domains are enumerated and added to
$\BI$, marking the new bases as ``unexplored''. Once all bases in $\BI$ have been explored, then we have found
all bases with full-dimensional critical domains.

\begin{algorithm}[ht]
  \caption{Enumerate all critical domains by graph search}\label{alg:graph_search}
  \inputbox{A feasible basis $B_0$ with $dim(S_{B_0})=d$, a sufficient matrix $M\in\real^{n \times n}$ and an affine subspace $S$ that lies in general position.} \\[1ex]
  \outputbox{The critical domain graph $\GI=(\BI,E)$.}
  \begin{algorithmic}[1]
    \STATE \textbf{Initialise} the set of nodes $\BI:=\{B_0\}$ and edges $E:=\emptyset$. %
    \STATE Flag $B_0$ as ``unexplored''.  \WHILE{there exists an unexplored basis $B$ in $\BI$}%
    \STATE Flag $B$ as ``explored'' \STATE $\BI_{\text{new}}:=\textit{neighbors}(B)$\COMMENT
    {\parbox[t]{3in}{$\BI_{\text{new}}:=\textit{neighbors}^\epsilon(B)$ if $S$ does not lie in general
        position}}\label{line:exploration}%
    \STATE Flag each $B^\prime \in \BI_\text{new} \backslash \BI$ as
    ``unexplored''\label{line:explored_condition}%
    \STATE $\BI:=\BI\cup \BI_{\text{new}}$ \STATE $E:=E\cup (B,B')$ for each $B'\in\BI_{\text{new}}$
    \ENDWHILE
    \STATE \textbf{return} $\GI=(\BI,E)$
  \end{algorithmic}
\end{algorithm}

The remainder of this section describes how the results of the previous sections can be exploited to
efficiently enumerate all adjacent critical domains of a given basis, i.e., how the function
$\textit{neighbors}(\cdot)$ can be properly implemented. The following section will then detail how the method
can be extended so that the general position assumption can be relaxed.

\subsection{Neighborhood computation of a critical domain}\label{subsection:Neighborhood_comput_GP_case}
This section details a computational method that enumerates all bases that define adjacent critical domains of
a given basis, i.e. how the basis is ``explored'', under both Assumption \ref{assu:regularity} and Assumption
\ref{assu:generalposition}.

The function \emph{neighbors} is given as Algorithm~\ref{alg:neighbors}.  Let $B$ be a basis whose critical
domain is full-dimensional.  By Proposition~\ref{prop:adjcencyCD_CC}, each adjacent critical domain must have
a $(d-1)$ dimensional intersection with a facet of $S_B$. We begin therefore by first computing all facets of
$S_B$ and then by determining the critical domains that intersect each one.

Given a complementary feasible basis $B$ we determine which facets of $\CI(B)$ define the facets of $S_B$ by
removing the redundant inequalities of $S_B=\{y\in\real^n\,\vert\, \beta y\geq 0, \; y=Q\theta+q \}$, where
$\beta:= A\columnIndex{B}\inv$. The hyperplane $h_i=\{y\,\vert\, \beta\rowIndex{i}y=0 \}, i\in B$ intersected
with $S_B$ is a facet of $S_B$ if there exists a $y^*\in S_B$ such that $\beta\rowIndex{j} y^*>0 $ for all
$j\in B\backslash\{i\}$ and $\beta\rowIndex{i}y^*=0$.  This fact relies on the general position assumption,
Assumption \ref{assu:generalposition}.  Therefore $h_i\cap S_B$ is a facet of $S_B$ if and only if the
following LP:
\begin{equation}\label{eq:LP_facet}
  \begin{array}{rccc}
    t^*=&\text{max }& t &\\ 
    &\text{s.t.}& -\beta\rowIndex{j} Q\theta+t\leq \beta\rowIndex{j}q,&\forall j\in B\backslash\{i\} \\
    & &-\beta\rowIndex{i}Q\theta=\beta\rowIndex{i}q&
  \end{array}
\end{equation}
has an optimal value $t^*>0$ strictly positive.

\begin{algorithm}[ht]
  \caption{Function $\textit{neighbors}(B)$: returns all bases whose critical domains are adjacent to
    $S_B$.}\label{alg:neighbors}
  \inputbox{A complementary basis $B$, the matrix $M$ and the affine subspace $S$. $M$ is assumed
    to be sufficient and $S$ to lie in general position.}\\[1ex]
  \outputbox{The set $\BI$ of complementary bases, whose critical domains are adjacent to $S_B$.}
  \begin{algorithmic}[1]
    \STATE $\BI:=\emptyset$ \STATE $\beta:=A\columnIndex{B}\inv$ \STATE $D:=-A\columnIndex{B}\inv
    A\columnIndex{N}\in \real^{B\times N}$\COMMENT{where $N=\{1,\dots,2n\}\backslash B$} \FOR{each $i\in B$}
    \IF[by solving LP (\ref{eq:LP_facet})] {$\beta\rowIndex{i} y\geq 0$ is non-redundant in
      $S_B$ \label{line:alg:redcheck} }\label{line:alg:redundancy} \IF{$D_{i\bar i}>0$} \STATE add
    $B\backslash\{i\}\cup\{\bar i\}$ to $\BI$ \ELSE \FOR {each $j\in B$ with $D_{i\bar j}<0$}
    \STATE{$B'':=B\backslash\{i,j\}\cup\{\bar i,\bar j\}$ } \IF [by solving LP
    (\ref{eq:adjacencyCheck_exchange_pivot_non_degenerate})]{$dim(S_B \cap
      S_{B''})=d-1$}\label{line:alg:adjacency} \STATE add $B''$ to $\BI$
    \ENDIF
    \ENDFOR
    \ENDIF
    \ENDIF
    \ENDFOR
    \RETURN{$\BI$}
  \end{algorithmic}
\end{algorithm}

By solving LP~\eqref{eq:LP_facet} for each $i\in B$ we can determine if $\CI(B\backslash\{i\})$ defines a
facet of $S_B$ or not; see Line~\ref{line:alg:redcheck} of Algorithm~\ref{alg:neighbors}. If it does, then the
goal is to determine which bases, if any, have critical domains that intersect this facet. From the previous
section, we saw that there are three possible cases:

\paragraph{Diagonal pivot.} If {$D_{i\bar i}>0$} then the cone $\CI(B')$ defined by
$B':=B\backslash\{i\}\cup\{\bar i\}$ is the unique complementary cone adjacent to $\CI(B)$ along the facet
$\CI(B)\cap h_i$, due to Lemma~\ref{thm:adjacency_CSU_necessary_2}, and therefore $S_{B'}$ is the unique
critical domain adjacent to $S_B$ along the facet $S_B\cap h_i$. Since $S_{B'}$ is nonempty (as $S_B\cap h_i$
is included in $S_{B'}$) it is full-dimensional by Assumption \ref{assu:generalposition}.
\paragraph{Boundary of $\KAPPA(M)$.} $\CI(B\backslash\{i\})$ is a facet of the complementary range and
therefore no other complementary cones intersect it. From Corollary~\ref{cor:No_adj_cone}, this is the case
when $D_{i\bar i} = 0$ and $D_i \ge \mbzero$.

\paragraph{Exchange pivot.} By looking at the dictionary $D$ of $B$ all complementary cones adjacent to $C(B)$
that contain the index $\bar i$ can be determined (see Theorem~\ref{prop:adjacencyCC}). However, not all such
cones intersect $S_B$ with dimension $d-1$ and for this reason we need to test for each cone $ \CI(B'')$
adjacent to $C(B)$ whether $ S_{B''}$ is adjacent to $S_B$, i.e. whether the condition $\dim (S_B\cap
S_{B''})=d-1$ holds. Now assume that the basis $B'':=B\backslash\{i,j\}\cup\{\bar i,\bar j\}$, where $j\in
B\backslash\{i\}$, defines such an adjacent complementary cone according to Theorem~\ref{prop:adjacencyCC}. In
order to determine whether $S_{B''}$ is adjacent to $S_B$ we can solve following LP:

\begin{equation}\label{eq:adjacencyCheck_exchange_pivot_non_degenerate}
  \begin{array}{ccccc}
    \text{max} & & t& &\\
    \text{s.t.}& -\beta\rowIndex{k}Q\theta +t&\leq &\beta\rowIndex{k} q&\forall k\in B\backslash\{i\}\\
    & -\beta\rowIndex{k}''Q\theta +t&\leq &\beta\rowIndex{k} q&\forall k\in B''\backslash\{j\}\\
    & -\beta\rowIndex{i}Q\theta& = &\beta\rowIndex{i} q& \\
    & -\beta\rowIndex{j}''Q\theta& = &\beta''\rowIndex{j} q\enspace,& 
  \end{array}
\end{equation}
where $\beta=A\columnIndex{B}\inv$ and $\beta''=A\columnIndex{B''}\inv$. As in the simpler case
(\ref{eq:LP_facet}) above, the intersection $S_B\cap S_{B''}$ has dimension $d-1$ if and only if the optimal
value of (\ref{eq:adjacencyCheck_exchange_pivot_non_degenerate}) is strictly positive. In this case
$S_{B''}=\CI(B'')\cap S$ is nonempty and by the general position assumption, it is also full-dimensional.%

\begin{rem}\label{rem:redundant_equality}
  Since $\CI(B)$ and $\CI(B'')$ are adjacent cones and the hyperplanes $\{y\vert \beta\rowIndex{i}y =0\}$ and
  $\{y\vert \beta\rowIndex{j}'' y=0\}$ defines their shared facet, the two hyperplanes must be
  equivalent. Therefore in (\ref{eq:adjacencyCheck_exchange_pivot_non_degenerate}) one of the equality
  constraints $-\beta\rowIndex{j}''Q\theta = \beta''\rowIndex{j} q$ or $-\beta\rowIndex{i}Q\theta =
  \beta\rowIndex{i} q$ can be removed.
\end{rem}

\begin{figure}
  \centering \subfigure[Two-dimensional slice of a three-dimensional example. The cones $C_2$ and $C_3$ are
  both adjacent to $C_1$ along the same facet. However the affine space $S$ does not intersect $C_2$ and
  therefore $S\cap C_2$ is not adjacent to $S\cap C_1$. This situation arises in the case of an exchange pivot
  and requires that adjacency much be checked by solving an LP.\label{subfig:adjacency_exchange_pivot}]
  {\parbox[t]{0.45\columnwidth}{\includegraphics[width=0.35\columnwidth]{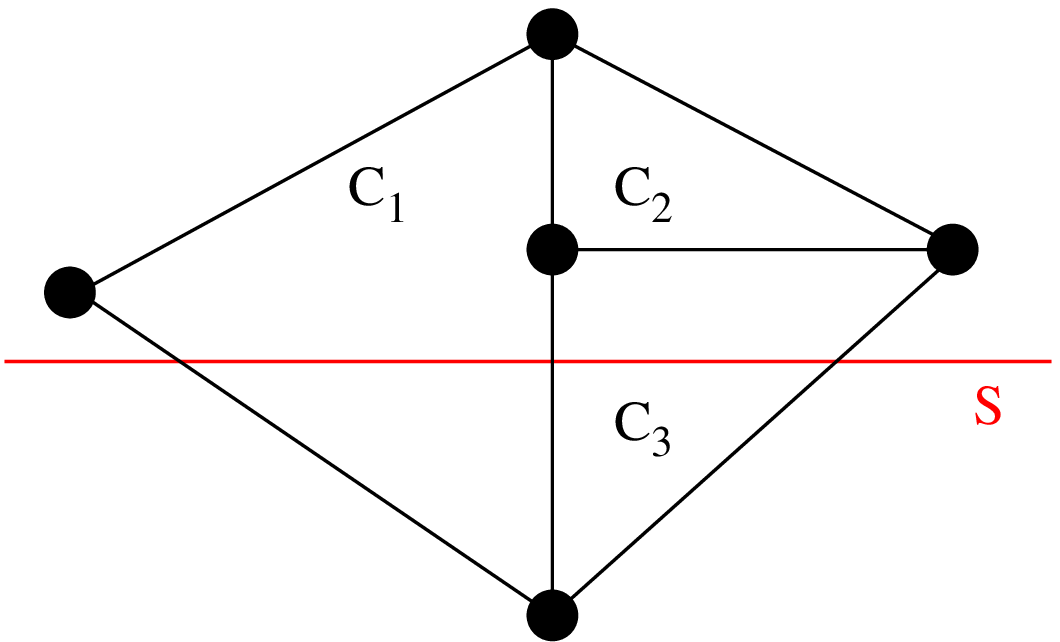}}}%
  \hspace{0.05\columnwidth}%
  \subfigure[Example of diagonal pivot: no adjacency check is needed.\label{subfig:redundant_inequality}]
  {\parbox[t]{0.45\columnwidth}{\includegraphics[width=0.35\columnwidth]{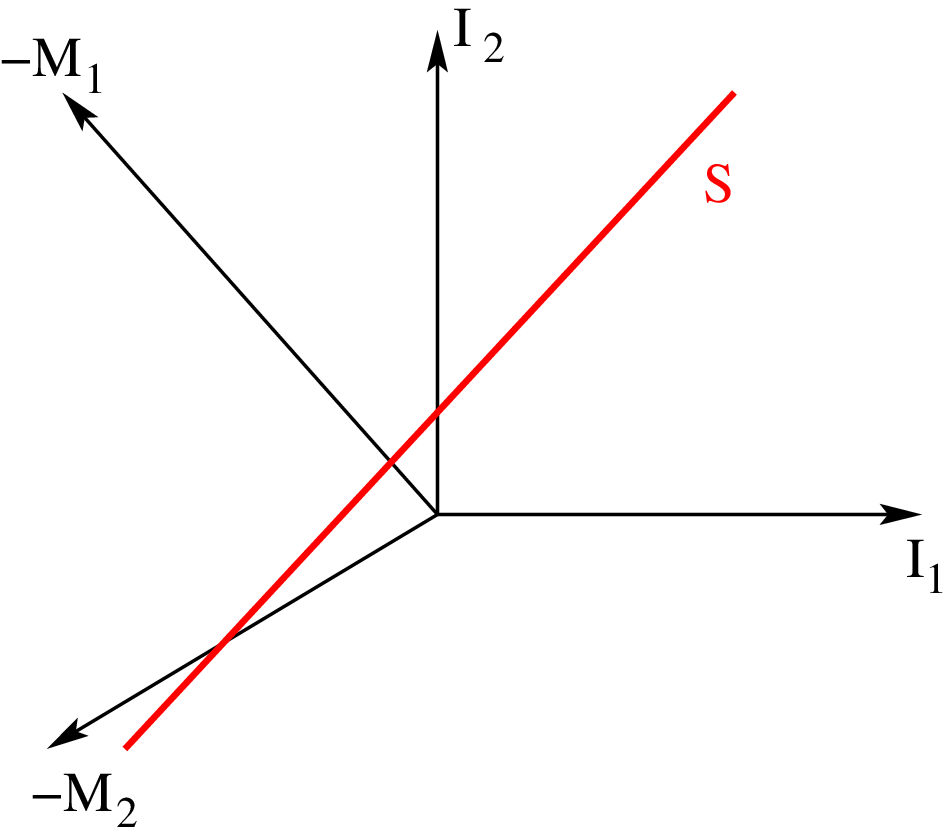}}}
  \caption{Adjacency of critical domains. See Example~\ref{ex:adjacency_pivot} for details.}
\end{figure}

\begin{ex}\label{ex:adjacency_pivot}
  In Figure~\ref{subfig:adjacency_exchange_pivot} a two-dimensional slice of three, three-dimensional cones is
  shown. The cones $C_2$ and $C_3$ are both adjacent to $C_1$ along the same facet. However the affine
  subspace $S$ does not intersect $C_2$ and therefore $S\cap C_2$ is not adjacent to $S\cap C_1$.

  In Figure~\ref{subfig:redundant_inequality} we consider the complementary basis $B_1=\{1,2\}$ and the cone
  $\CI(B_1)=\cone(I)=\{y\geq 0\}$. The goal is to find the critical domain that is adjacent to
  $S_{B_1}=\CI(B_1)\cap S$. The inequality $y_1\geq0$ is not redundant in $S_{B_1}$ and therefore the
  hyperplane $h_1=\{y_1=0\}$ defines a facet of $S_{B_1}$. Since $\cone(-M_1,I_2)\cap h_1$ is equal to
  $\CI(B_1)\cap h_1$, we have that $\cone(-M_1,I_2)\cap S$ is adjacent to $S_{B_1}$. The other inequality
  $y_2\geq0$ is redundant in $S_{B_1}$ and therefore the critical domain $\cone(I_1,-M_2)\cap S$ is not
  adjacent to $S_{B_1}$.
\end{ex}

\section{Extension of the algorithm for $S$ not in general position}\label{section:algo_non_GP}

The previous section presented an algorithm that enumerates all feasible bases and returns the graph of
critical domains.  The algorithm works only under the assumption that the image of the parametrisation $S$
lies in general position; Assumption \ref{assu:generalposition}.  However, this assumption is not realistic
and it is highly desirable to remove it.

In the case of degeneracy (i.e. $S$ is not in general position), Propositions~\ref{prop:disjointCD}
and~\ref{prop:adjcencyCD_CC} are no longer valid, as can be seen in
Example~\ref{ex:no_GP_counterexample}. Therefore, during neighborhood computation it is not sufficient to
explore only the adjacent complementary cones. In order to extend the algorithm to the degenerate case, we
apply a symbolic perturbation technique (the lexicographic perturbation) which has the effect of shifting $S$
into general position.

The next subsection will demonstrate how to handle the perturbation for neighborhood computation, in
particular lines~\ref{line:alg:redundancy} and~\ref{line:alg:adjacency} of Algorithm~\ref{alg:neighbors}. By
using this technique we obtain a graph of critical domains $\GI^\epsilon$ relative to the perturbed affine
subspace $S^\epsilon$, which can differ from the graph of critical domains $\GI$ relative to $S$.  In
particular, some full-dimensional critical domains in $S^\epsilon$ may be non full-dimensional in $S$. We will
see that there exists a subgraph $\GI$ of $\GI^\epsilon$ that is a graph of critical domains relative to $S$
and which can be obtained by postprocessing $\GI^\epsilon$.

\begin{ex}\label{ex:no_GP_counterexample}
  This example demonstrates the effect when a parametric LCP is not in general position. Consider the
  parametric LCP defined by the matrices
$$M=\begin{bmatrix}
  1&-1\\
  1&1
\end{bmatrix}
, \; Q=\begin{bmatrix}
  1\\
  -1
\end{bmatrix} \text{ and }\, q=\begin{bmatrix}
  0\\
  0
\end{bmatrix}\enspace.$$
A figure depicting the complementary cones relative to $M$ and of the affine subspace $S$ is shown in
Figure~\ref{fig:NO_GP_counterexample}. Let $B_1=\{1,2\}$, $B_2=\{2,3\}$, $B_3=\{3,4\}$ and $B_4=\{1,4\}$. For
notational simplicity, we denote by $C_i=\CI(B_i)$ the complementary cones and by $S_i=S\cap C_i$ for
$i=1,\dots ,4$ the critical domains. Clearly $S$ does not lie in general position because it intersects
$C_1,C_3$ and $C_4$ on their boundary but not in their interiors.  Proposition \ref{prop:disjointCD} is
violated because $S_1$ is neither empty nor full-dimensional, and furthermore $S_3$ and $S_4$ are equal and
hence $\relINT(S_3)=\relINT(S_4)$.  Theorem \ref{prop:adjcencyCD_CC} is violated because $S_2$ and $S_4$ are
adjacent, but $C_2$ and $C_4 $ are not.
\end{ex}
\begin{figure}[ht]\label{fig:NO_GP_counterexample}
  \begin{center}
    \includegraphics[width=0.45\columnwidth]{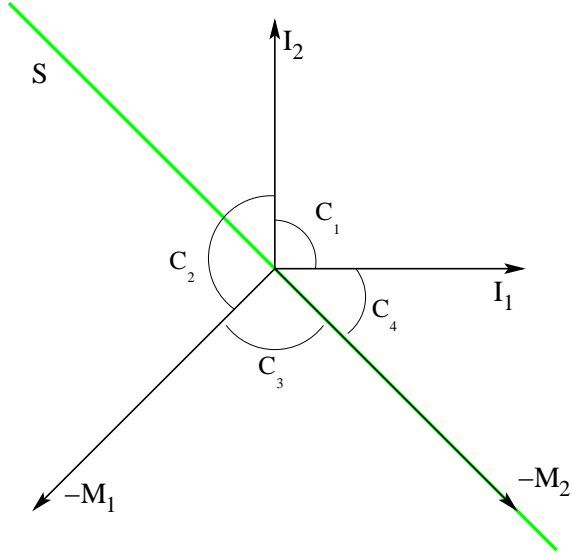}
  \end{center}
  \caption{An affine subspace $S$ that is not in general position. (See Example
    \ref{ex:no_GP_counterexample})}
\end{figure}

\subsection{Lexicographic perturbation}
This section presents a well-known method that permits the perturbation of the image of the parametrisation
$S$ into general position and which can be treated symbolically: the lexicographic perturbation.

We introduce the following notation that will be used for the reminder of the paper.
\begin{defi}
  The vector $\epsLexVector:=(\epsilon, \epsilon^2 , \epsilon^3,\dots,\epsilon^n)^T$ is called the
  \emph{lexicographic perturbation vector\/} and is a function of a positive real number $\epsilon$.
\end{defi}
We denote with $S^\epsilon$ the affine subspace $S$ perturbed by $\epsLexVector$
\begin{equation}
  S^\epsilon:=S+ \epsLexVector=\{y\in\real^n\,\vert\, 
  y=Q\theta+q+\epsLexVector, \theta \in \real^d\}\enspace.
\end{equation}

\begin{thm}\label{thm:lexpert_GP}
  Let $M$ be a sufficient matrix and $S$ be the affine subspace $\{Q\theta+q\,\vert\, \theta\in \real^d\}$ for
  a given matrix $Q$ and a vector $q$.  There exists a $\delta>0$ such that $S^\epsilon:=S+\epsLexVector$ lies
  in general position for each $\epsilon\in(0,\delta)$.
\end{thm}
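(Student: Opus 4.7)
The plan is to fix an arbitrary complementary basis $B$ and show that the set of $\epsilon>0$ at which general position fails for the pair $(S^\epsilon,\CI(B))$ is finite; since there are only finitely many complementary bases, we can then take $\delta$ to be smaller than the smallest positive bad value across all $B$. The key device will be to encode ``general position fails'' as ``an LP optimum equals zero'' and then exploit the polynomial dependence of that LP on $\epsilon$.

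Concretely, with $\beta:=A\columnIndex{B}\inv$, I would work with the parametric LP
\[
t_B^*(\epsilon):=\sup\{\,t\in\real:\beta(Q\theta+q+\epsLexVector)\geq t\mbone\text{ for some }\theta\in\real^d\,\}.
\]
Since $\INT(\CI(B))=\{y:\beta y>0\}$, one checks directly that $S^\epsilon\cap \CI(B)\neq\emptyset$ iff $t_B^*(\epsilon)\geq0$ and $S^\epsilon\cap \INT(\CI(B))\neq\emptyset$ iff $t_B^*(\epsilon)>0$. Hence general position fails at $\epsilon$ for basis $B$ precisely when $t_B^*(\epsilon)=0$.

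Next I would dualize. The primal is always feasible (pick any $\theta$ and take $t$ sufficiently negative), so strong duality gives
\[
t_B^*(\epsilon)=\inf\bigl\{\,y^T\beta(q+\epsLexVector):y\in Y_B\,\bigr\},\qquad Y_B:=\{\,y\in\real^n_{\geq 0}:Q^T\beta^T y=0,\ \mbone^T y=1\,\}.
\]
The polytope $Y_B$ is independent of $\epsilon$. If $Y_B=\emptyset$ then $t_B^*\equiv+\infty$ and nothing needs to be shown for this $B$; otherwise the infimum is attained at one of the finitely many vertices of $Y_B$, and
\[
t_B^*(\epsilon)=\min_{y^*\in\operatorname{vert}(Y_B)}P_{y^*}(\epsilon),\qquad P_{y^*}(\epsilon):=y^{*T}\beta(q+\epsLexVector),
\]
where each $P_{y^*}$ is a univariate polynomial in $\epsilon$ of degree at most $n$.

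The one step that genuinely uses the structure of the lexicographic perturbation, and that I expect to be the main conceptual content, is showing that no $P_{y^*}$ is the zero polynomial. If $P_{y^*}\equiv 0$, then equating the coefficient of $\epsilon^k$ to zero for $k=1,\dots,n$ forces every component of the row vector $y^{*T}\beta$ to vanish; since $\beta$ is invertible this gives $y^*=0$, contradicting $\mbone^T y^*=1$. The linear independence of the monomials $\epsilon,\epsilon^2,\dots,\epsilon^n$ is precisely what lets us read off each entry of $y^{*T}\beta$ from the identity $y^{*T}\beta\,\epsLexVector\equiv 0$. Once each $P_{y^*}$ is known to be nonzero, it has at most $n$ real roots, so $\{\epsilon>0:t_B^*(\epsilon)=0\}\subseteq\bigcup_{y^*}\{P_{y^*}=0\}$ is finite; taking the union over the finitely many complementary bases and choosing $\delta$ less than the minimum positive element of that union finishes the argument.
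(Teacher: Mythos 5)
Your argument is correct and reaches the same conclusion by a genuinely different route. The paper also reduces the theorem to the finiteness, for each fixed complementary basis $B$, of the set of $\epsilon$ at which $S^\epsilon$ meets $\CI(B)$ only in its boundary, and the paper's Lemma~\ref{lemma_finitely_many_eps} likewise passes through the Chebyshev-center LP and its dual. But the decompositions differ: the paper partitions the bad $\epsilon$ according to the subset $J\subseteq B$ of indices that become implicit equalities, then in each case applies strong duality together with a rank argument and a determinant computation to bound the bad set, and the proof of Theorem~\ref{thm:lexpert_GP} itself still needs a further three-way case split plus a continuity (intermediate-value) step. You avoid all of this by observing that general position fails at $B$ precisely when the Chebyshev value $t_B^*(\epsilon)$ equals zero (using that the primal sup is always attained, so ``$=0$'' correctly captures ``touches $\CI(B)$ but not its interior''), that the dual feasible set $Y_B$ is an $\epsilon$-independent polytope (it sits inside the standard simplex), so that $t_B^*$ is a pointwise minimum of finitely many degree-$\le n$ polynomials indexed by the vertices of $Y_B$, and finally that none of these polynomials is identically zero because the coefficients of $\epsilon,\dots,\epsilon^n$ are exactly the entries of $y^{*T}\beta$, which cannot all vanish for $y^*\neq 0$ since $\beta=A\columnIndex{B}\inv$ is invertible. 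What your route buys is a single clean argument in place of the paper's two-lemma, multi-case structure; both proofs, incidentally, rely only on the invertibility of the basis submatrix and never actually use sufficiency of~$M$.
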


\begin{rem}
  In the remainder of the paper we will use the standard expression ``property A holds for all
  \emph{sufficiently small} $\epsilon>0$'' rather than the more cumbersome ``there exists $\delta>0$ such that
  property A holds for each $\epsilon\in(0,\delta)$''. Therefore the claim of Theorem~\ref{thm:lexpert_GP} can
  be written as: $S^\epsilon$ lies in general position for all sufficiently small $\epsilon>0$.
\end{rem}

To prove Theorem \ref{thm:lexpert_GP}, we need the following lemma.
\begin{lem}\label{lemma_finitely_many_eps}
  Let $Q\in\real^{n\times d}$, $q\in\real^{n}$ and $S^\epsilon=\{Q\theta+q +\epsLexVector \vert\ \theta \in
  \real^d\}$. For any complementary cone $\CI$, there exist finitely many $\epsilon$ such that $S^\epsilon$
  intersects $\CI$ but not $\INT(\CI)$.
\end{lem}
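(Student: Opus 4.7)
The plan is to reduce the geometric statement to an assertion about the roots of finitely many non-zero univariate polynomials in $\epsilon$, bridging the two via Farkas--Gordan duality. I focus on the case where $\CI=\CI(B)$ is full-dimensional, which is the only case relevant for the proof of Theorem~\ref{thm:lexpert_GP} (general position concerns only complementary bases); the lower-dimensional case reduces to the same argument applied to the full-dimensional cone $\CI+\range(Q)$ with the appropriate interpretation of ``interior''. Write $\beta:=A\columnIndex{B}\inv$ and $v(\epsilon):=\beta(q+\epsLexVector)\in\real^n$. Then $\CI=\{y\in\real^n : \beta y\geq 0\}$, and the conditions $S^\epsilon\cap\CI\neq\emptyset$ and $S^\epsilon\cap\INT(\CI)\neq\emptyset$ translate, via the bijection $\theta\mapsto Q\theta+q+\epsLexVector$, into feasibility of the weak system $\beta Q\theta\geq -v(\epsilon)$ and of its strict counterpart $\beta Q\theta>-v(\epsilon)$, respectively.

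The key step, and the main technical obstacle, is the Farkas--Gordan alternative: the weak system is feasible while the strict one is not precisely when there exists $\mu\in\real^n$ with $\mu\geq 0$, $\mu\neq 0$, $\mu^T\beta Q=0$, and $\mu^T v(\epsilon)=0$. Collect the homogeneous part into $W:=\{\mu\geq 0 : \mu^T\beta Q=0\}$, a pointed polyhedral cone (it is contained in the non-negative orthant), with finitely many extreme rays $\mu^{(1)},\dots,\mu^{(r)}$. Because $\mu\mapsto\mu^T v(\epsilon)$ is non-negative on $W$ by feasibility of the weak system and vanishes at some non-zero $\mu\in W$, expanding $\mu$ as a conic combination of the extreme rays and using non-negativity of each summand forces $\mu^{(k)T}v(\epsilon)=0$ for at least one $k$. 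Hence the set of ``bad'' $\epsilon$ is contained in $\bigcup_{k=1}^{r}\{\epsilon : g_k(\epsilon)=0\}$, where $g_k(\epsilon):=\mu^{(k)T}\beta(q+\epsLexVector)$.

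Finally,
\[
g_k(\epsilon) \;=\; \mu^{(k)T}\beta q \;+\; \sum_{j=1}^{n}\bigl(\mu^{(k)T}\beta\bigr)_j\,\epsilon^{j}
\]
is a univariate polynomial in $\epsilon$ of degree at most $n$. It cannot be identically zero: otherwise all coefficients vanish, giving $\mu^{(k)T}\beta=0$, and multiplying on the right by the invertible matrix $A\columnIndex{B}$ yields $\mu^{(k)}=0$, contradicting the fact that $\mu^{(k)}$ is an extreme ray of $W$. Each $g_k$ therefore has at most $n$ real roots, and since $r$ is finite, so is the set of $\epsilon$ for which $S^\epsilon$ meets $\CI$ but not $\INT(\CI)$. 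The underlying reason is that the components $\epsilon,\epsilon^2,\ldots,\epsilon^n$ of $\epsLexVector$ are linearly independent over $\real$, which prevents cancellation with the constant term $\mu^{(k)T}\beta q$ over any interval of positive $\epsilon$.
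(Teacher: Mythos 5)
Your proof is correct, and it takes a genuinely different route from the paper's. Both arguments reduce the claim to finitely many non-trivial univariate polynomial equations in $\epsilon$ and both pass through LP duality, but the combinatorial decomposition is different. The paper fixes the set $J\subseteq B$ of implicit-equality indices, distinguishes sub-cases according to whether $\beta\rowIndex{J}Q$ has a zero row, runs the Chebyshev-center dual, and then needs a rank/determinant argument to manufacture a non-trivial polynomial. You instead invoke the Farkas--Gordan alternative directly and observe that any dual multiplier $\mu$ certifying strict infeasibility lives in the fixed polyhedral cone $W=\{\mu\geq 0:\mu^T\beta Q=0\}$, which is pointed and hence generated by finitely many extreme rays $\mu^{(1)},\dots,\mu^{(r)}$. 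The conic-decomposition step (non-negativity of $\mu\mapsto\mu^Tv(\epsilon)$ on $W$ from weak feasibility, together with vanishing at one non-zero $\mu$) neatly transfers the vanishing to an extreme ray, and the non-degeneracy of each $g_k$ then follows cleanly from invertibility of $\beta$. This avoids the paper's sub-casing and the determinant/rank bookkeeping entirely, and it makes the role of the finiteness explicit (finitely many extreme rays of a cone depending only on $\beta Q$, each contributing a non-zero degree-$\le n$ polynomial). Two minor remarks: the ``precisely when'' in your alternative is an over-claim — existence of such a $\mu$ alone shows strict infeasibility but not weak feasibility — but this is harmless since you only use the forward implication, and you correctly inject weak feasibility separately to get non-negativity on $W$. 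Also, the closing sentence about linear independence of $\epsilon,\epsilon^2,\dots,\epsilon^n$ over $\real$ is a loose paraphrase; the operative fact is simply that a non-zero univariate polynomial of degree $\le n$ has at most $n$ roots. Your restriction to full-dimensional complementary cones is the right one (the paper's proof also opens with ``Let $B$ be a complementary basis'' and general position only concerns full-dimensional cones), though the claimed reduction of the lower-dimensional case via $\CI+\range(Q)$ is not really meaningful and is best omitted.
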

\begin{proof}
  Let $B$ be a complementary basis. We denote with $h_i$ the hyperplane $h_i:=\{y\vert\beta\rowIndex{i}y=0\}$
  for all $i\in B$. Therefore $\CI(B)\cap h_i$ is a facet of $\CI(B)=\{y\in\real^n\vert \beta y\geq 0\}$. We
  will prove that for any subset $J\subseteq B$ there are finitely many $\epsilon$ such that the following
  condition holds
  \begin{equation}\label{eq:S_B^eps_included_Face}
    \emptyset\neq S_B^\epsilon=\CI(B)\cap S^\epsilon\subseteq  \CI(B)\cap (\bigcap_{i\in J}h_i)\enspace\text{and}\enspace  S_B^\epsilon \not \subseteq h_i \text{ for all }i\not\in J \enspace.
  \end{equation}
  The statement of the lemma will then follow directly, since there are finitely many subsets of $B$.

  Let $J$ be any nonempty subset of $B$. For any $\epsilon$ for which (\ref{eq:S_B^eps_included_Face}) holds,
  $J$ contains the indices of all inequalities of $S^\epsilon$ that are implicit equalities and it holds that
  \begin{equation}\label{eq:inclusion_prop_S_B}
    \beta\rowIndex{J}(Q\theta + q+\epsLexVector)=0\text{ for all }\theta\in\ERRE_B^\epsilon \quad,
  \end{equation}
  where $\ERRE_B^\epsilon=\{ \theta \, | \, Q\theta+q+\epsilon\in S_B^\epsilon \}$.
  
  We can distinguish two cases. The case 1: there exists $i\in J$ such that $\beta\rowIndex{i}Q$ is a zero row
  vector. Since $S_B^\epsilon$ is nonempty, $\beta_i(q+\epsLexVector)=0$ for the
  condition~\eqref{eq:inclusion_prop_S_B} to be valid. This non-trivial polynomial equation holds for at most
  $n$ values of $\epsilon$.
  
  Now consider the case 2: $\beta\rowIndex{J}Q$ has no zero rows.  We will prove that the matrix
  $\beta\rowIndex{J}Q$ does not have full row rank.  Since $S_B^\epsilon$ is nonempty and has dimension
  $d-\rank(\beta\rowIndex{J}Q)<d$, the Chebyshev center problem has an optimal value of zero,
  i.e. $\max\{t~\vert~\beta_i Q\theta -t \geq -\beta_i(q+\epsilon)\, \forall i \}=0$. We consider its dual
  problem
  \begin{equation}\label{eq:dual_chebyshev_proof_Lemma5-4}
    \begin{array}{crcl}
      \text{min} &  (\beta(q+\epsLexVector))^T\,y& & \\
      \text{s.t.}& -(\beta \,Q)^Ty&=&0\\ 
      & \sum y_i&=&1\\ 
      & y &\geq &0 \enspace.
    \end{array}
  \end{equation}
  From strong duality, there exists a non-zero optimal solution $y^*\gneq0$ such that $(\beta Q)^Ty^*=0$ and
  $(\beta(q+\epsilon))^Ty^*=0$. We now claim that all indices of the strictly positive components of $y^*$ are
  contained in $J$. Assume that there is an index $i\not\in J$ with $y^*_i>0$. Then, for any $\theta \in
  \ERRE_B^\epsilon$, $\beta\rowIndex{i}(Q \theta+q+\epsilon)=0$, i.e. $S_B^\epsilon\subseteq h_i$, which
  contradicts the maximality condition in~\eqref{eq:S_B^eps_included_Face}. Therefore, $y^*_J\gneq 0 $ and
  $(\beta Q)^Ty^*_J=0$, i.e. there exists a non-trivial combination of rows of $\beta\rowIndex{J}Q$.

  Let $\{v_1,\dots, v_s\}$ be a set of columns of $Q$ such that $\beta\rowIndex{J}v_1$, $\ldots$,
  $\beta\rowIndex{J}v_s$ form a basis of the column space of $\beta\rowIndex{J}Q$. Since $\beta\rowIndex{J}Q$
  does not have full row rank, $s<\abs{J}$.  If
  $\beta\rowIndex{J}(q+\epsLexVector),\beta\rowIndex{J}v_1,\dots,\beta\rowIndex{J}v_s$ are linearly
  independent then for any $\theta \in \Real^d$ the equation~\eqref{eq:inclusion_prop_S_B} cannot hold.  We
  claim these vectors are linearly dependent for finitely many $\epsilon$.

  First we consider the case $s=\vert J\vert -1$.  Then, these vectors are linearly dependent if and only if
  \begin{equation}
    \det(\beta\rowIndex{J}(q+\epsLexVector),\beta\rowIndex{J}v_1,\dots,\beta\rowIndex{J}v_s)=0\enspace .
  \end{equation}
  This condition is a polynomial equation in $\epsilon$ and holds for finitely many $\epsilon$. Finally, if
  $s<\vert J\vert -1$, we use the same
  argument by adding a proper number of vectors $\bar v_1, \dots, \bar v_{\vert J\vert -1-s}$ such that\\
  $\beta\rowIndex{J}v_1,\dots,\beta\rowIndex{J}v_s,\bar v_1, \dots, \bar v_{\vert J\vert -1-s}$ are $\vert
  J\vert -1$ linearly independent vectors.
\end{proof}

\begin{proof}[Proof of Theorem \ref{thm:lexpert_GP}]
  We can assume without loss of generality that $S$ does not lie in general position. For any complementary
  cone $\CI$ exactly one of the following cases holds:
  \begin{enumerate}
  \item $S$ does not intersect $\CI$ \label{item:disjoint_to_C},
  \item $S$ intersects the interior of $\CI$ \label{item:intersect_int_C},
  \item $S$ intersects the boundary of $\CI$ and $S \cap\INT(\CI)=\emptyset$ ,\label{item:enu_case_only_bound}
    which can be differentiated into two subcases:
    \begin{enumerate}\setcounter{enumii}{0}
    \item $\exists \delta>0$ such that $\CI\cap S^\epsilon=\emptyset$ for all
      $\epsilon\in(0,\delta)$\label{item:enum_case_3a}
    \item For all $\delta>0$ there exists an $\epsilon\in(0,\delta)$ such that $\CI\cap
      S^\epsilon\neq\emptyset$ .\label{item:enum_case_3b}
    \end{enumerate}
  \end{enumerate}
  For each of these cases we need to prove that there exists $\delta>0$ such that either $S^\epsilon$
  intersects $\INT(\CI)$ for all $\epsilon\in(0,\delta)$ or $\CI\cap S^\epsilon=\emptyset$ for all
  $\epsilon\in(0,\delta)$. Clearly, this condition holds for cases~\ref{item:disjoint_to_C},
  \ref{item:intersect_int_C} and~\ref{item:enum_case_3a}.  Therefore, it suffices to prove that for
  case~\ref{item:enum_case_3b} there exists a $\delta>0$ such that $S^\epsilon$ intersects $\INT(\CI)$ for all
  $\epsilon\in(0,\delta)$.

  Let $\CI$ be any complementary cone that satisfies condition~\ref{item:enum_case_3b}. From
  Lemma~\ref{lemma_finitely_many_eps} and by assumption there exists a $\delta>0$ such that $S^\delta$
  intersects the interior of $\CI$ and for any $\epsilon\in(0,\delta)$ either $\INT(\CI)\cap
  S^\epsilon\neq\emptyset$ or $\CI\cap S^\epsilon=\emptyset$.  More precisely, one can select any $\delta>0$
  smaller than the smallest $\epsilon>0$ for which $S^\epsilon$ intersects $\CI$ but not $\INT(\CI)$. Since
  $S^\epsilon$ shifts continuously with $\epsilon$, there exists no $\epsilon\in(0,\delta)$ such that $\CI\cap
  S^\epsilon=\emptyset$.
\end{proof}

\begin{ex}
  Figure~\ref{fig:lexPerturbation} shows two examples in which $S$ does not lie in general position. In the
  first example (Figure~\ref{subfig:lexPerturbation1}) the cones $\CI(\{2,3\})=\cone(I_2, -M_1)$ and
  $\CI(\{1,4\})=\cone(I_1, -M_2)$ contain adjacent critical domains, although they are not adjacent cones.  In
  the second example (Figure~\ref{subfig:lexPerturbation2}) two different critical domains coincide. In higher
  dimensions the critical domains can overlap in several ways and therefore it is not evident how to choose an
  appropriate decomposition when this situation occurs. In both cases the affine subspace $S$ can be
  artificially and symbolically shifted into general position through the use of lexicographic perturbation.
\end{ex}
\begin{figure}[t]\label{fig:lexPerturbation}
  \centering
  \subfigure[\label{subfig:lexPerturbation1}]{\includegraphics[width=0.4\columnwidth]{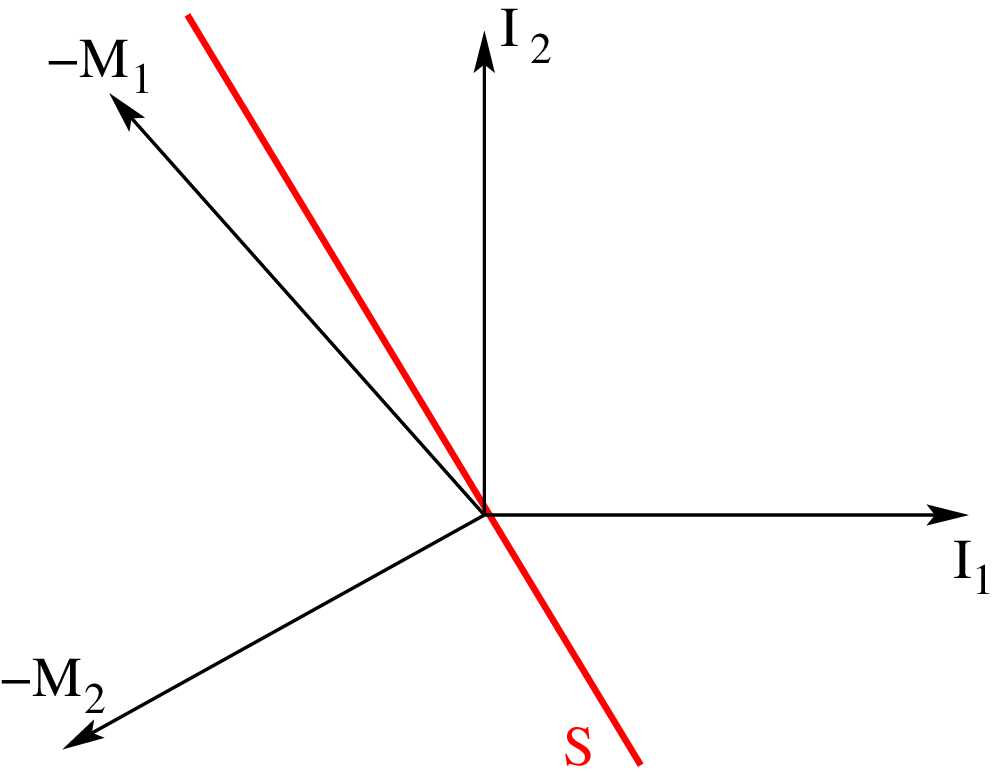}
    \includegraphics[width=0.4\columnwidth]{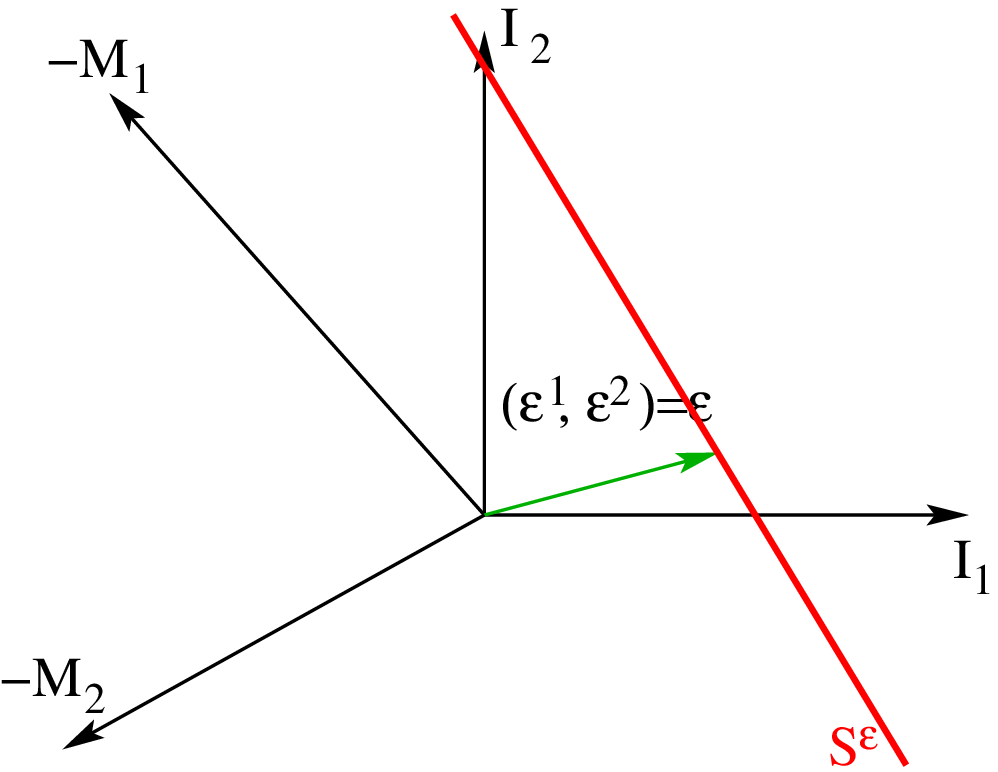}
  }\\
  \subfigure[\label{subfig:lexPerturbation2}]{\includegraphics[width=0.4\columnwidth]{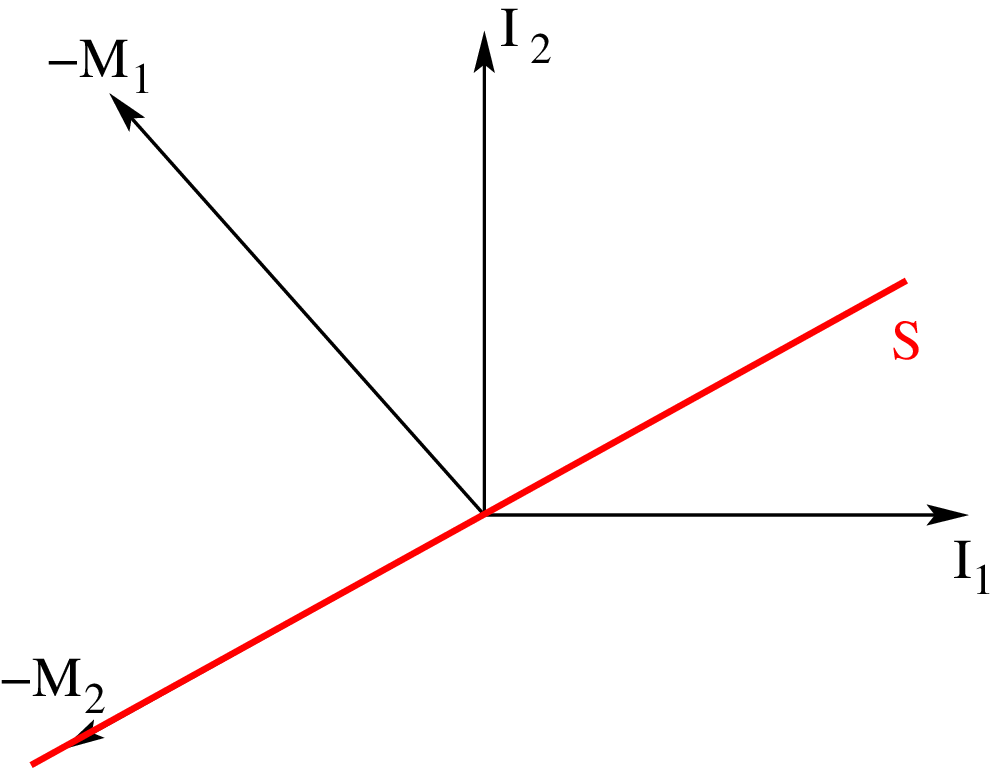}
    \includegraphics[width=0.4\columnwidth]{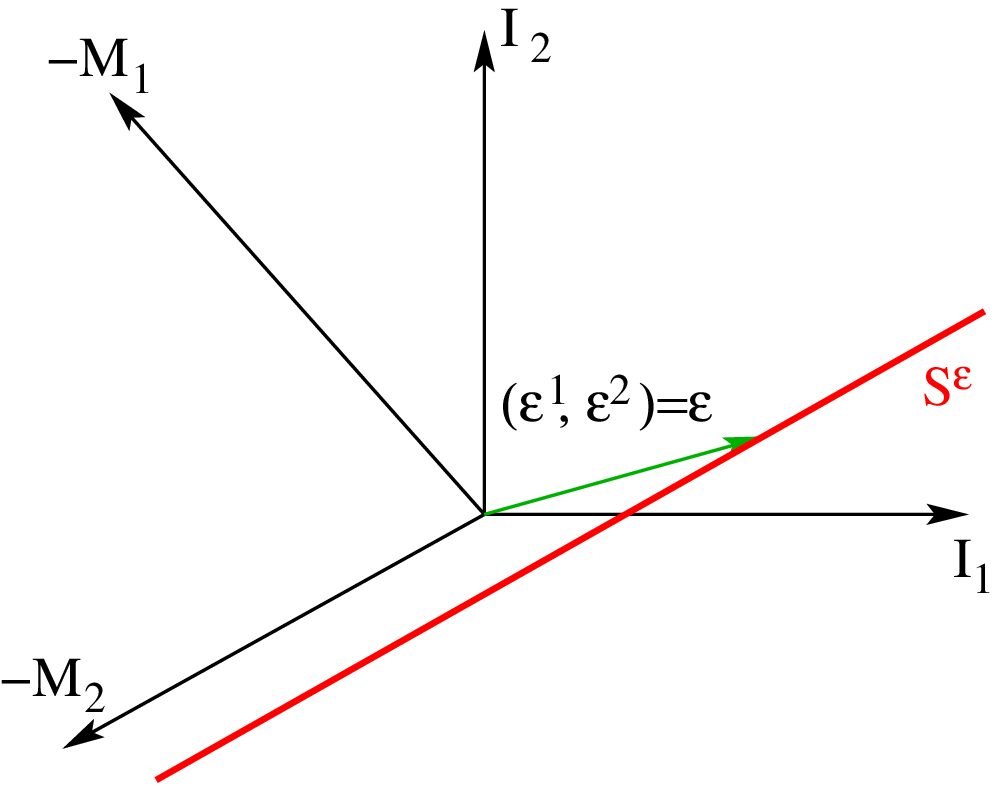}}
  \caption{Lexicographic perturbation of the affine subspace $S$}
\end{figure}

\subsection{Neighborhood computation in $S^\epsilon$}
\begin{algorithm}[t]
  \caption{function $\textit{neighbors}^\epsilon(B)$: returns all bases whose critical domains are adjacent to
    $S_B^\epsilon$.}\label{alg:neighbors_lex}
  \inputbox{A complementary basis $B$, a sufficient matrix $M$ and an affine subspace $S^\epsilon$.}\\
  \outputbox{The set of complementary bases $\BI$, whose critical domains are adjacent to $S^\epsilon_B$.}
  \begin{algorithmic}[1]
    \STATE $\BI:=\emptyset$%
    \STATE $\beta:=A\columnIndex{B}\inv$%
    \STATE $D:=-A\columnIndex{B}\inv A\columnIndex{N}\in \real^{B\times N}$%
    \COMMENT{where $N=\{1,\dots,2n\}\backslash B$}%
    \FOR{each $i\in B$}%
    \IF[if $\beta\rowIndex{i} y\geq 0$ is non-redundant]{$\textit{isLexPositive}(\redundancy(B,i))$ is
      true}\label{line:alg:redundancy_lex} \IF{$D_{i\bar i}>0$}%
    \STATE add $B':=B\backslash\{i\}\cup\{\bar i\}$ to $\BI$ 
    \ELSE
    \FOR {each $j\in B$ with $D_{i\bar j}<0$}
    \STATE{$B'':=B\backslash\{i,j\}\cup\{\bar i,\bar j\}$ }%
    \IF[if $dim(S^\epsilon_B \cap S^\epsilon_{B''})=d-1$]{$\textit{isLexPositive}(\adjacency(B,B''))$}\label{line:alg:adjacency_lex} %
    \STATE add $B''$ to $\BI$
    \ENDIF
    \ENDFOR
    \ENDIF
    \ENDIF
    \ENDFOR
    \RETURN{$\BI$}
  \end{algorithmic}
\end{algorithm}

Given a complementary basis $B$ that is feasible in $S^\epsilon$, the goal is to determine the adjacent
critical domains to $S^\epsilon_B$. Since $S^\epsilon$ lies in general position, it suffices to explore the
adjacent bases of the basis $B$. Similarly to the non degenerate case, we first determine the facets of
$S^\epsilon_B$ at Line~\ref{line:alg:redundancy_lex} of Algorithm~\ref{alg:neighbors_lex} and then compute the
adjacent critical domains that intersect with each facet.

Let $B$ be a complementary basis and consider $S^\epsilon_B=\{y\in\real^n\,\vert\, \,\beta y \geq 0 , \, y=
Q\theta + q+\epsLexVector\}$, where $\beta:=A\columnIndex{B}\inv $. The hyperplane $h_i:=\{y\,\vert\,
\beta\rowIndex{i}y=0 \}$, for some $i\in B$ intersected with $S^\epsilon_B$ forms a facet of $S^\epsilon_B$ if
there exists a $y^*\in S^\epsilon_B$ such that $\beta\rowIndex{j} y^*>0 $ for all $j\in B\backslash\{i\}$ and
$\beta\rowIndex{i}y^*=0$. Therefore $h_i\cap S_B^\epsilon$ is a facet of $S_B^\epsilon$ if and only if
\begin{equation}\label{eq:parametricLP_facet}
  \begin{array}{rcrclc}
    t^*(\epsilon)=&\text{max }& &t & &\\ 
    &\text{s.t.}& -\beta\rowIndex{j} Q\theta+t& \leq&\beta\rowIndex{j}(q+\epsLexVector),&\forall j\in B\backslash\{i\} \\
    & &-\beta\rowIndex{i}Q\theta& =& \beta\rowIndex{i}(q+\epsLexVector)&
  \end{array}
\end{equation}
has a positive optimal value for all $\epsilon>0$ sufficiently small.

This decision problem is no longer an LP, because the right hand side of the constraints depends on a
polynomial in $\epsilon$ and we want to know the behavior of $t^*(\cdot)$ in the neighborhood of zero.  In the
next subsection we will propose an efficient method for determining if $t^*(\cdot)$ is positive for
sufficiently small $\epsilon$.
 
If the hyperplane $h_i$ defines a facet of $S_B^\epsilon$, then we can distinguish the same three cases as
discussed in Section~\ref{subsection:Neighborhood_comput_GP_case}.
\paragraph{Diagonal pivot.}
If there is exactly one adjacent complementary basis $B'$, then $S_{B'}^\epsilon$ is full-dimensional (by the
general position of $S^\epsilon$) and is the unique adjacent critical domain to $S_B^\epsilon$ along
$S_B^\epsilon\cap h_i$.
\paragraph{Boundary of $\KAPPA(M)$.}
If there are no adjacent complementary cones to $\CI(B)$ along $h_i$ (see Corollary~\ref{cor:No_adj_cone}),
then there is no adjacent critical domain to $S_B^\epsilon$ along the facet $S_B^\epsilon\cap h_i.$
\paragraph{Exchange pivot.}
If there are adjacent bases $B''$ with $|B''\cap B|=2$ and $dim(\CI(B)\cap\CI(B''))=d-1$, then we must check
for each such basis $B''$ whether $dim(S_{B''}^\epsilon\cap S_{B}^\epsilon)=d-1$
(Line~\ref{line:alg:adjacency_lex} of Algorithm~\ref{alg:neighbors_lex}). Assume
$B'':=B\backslash\{i,j\}\cup\{\bar i,\bar j\}$ where $j\in B\backslash\{i\}$. As in the non-degenerate case,
we formulate a decision problem similar to LP~\eqref{eq:adjacencyCheck_exchange_pivot_non_degenerate} to test
the dimension of the intersection:
\begin{equation}\label{eq:parametricLP_adjacency}
  \begin{array}{rcrclc}
    t^*(\epsilon)=&\text{max }& & t & &\\ 
    &\text{s.t.}&\beta\rowIndex{k} Q\theta-t&\geq&-\beta\rowIndex{k}(q+\epsLexVector),& \forall k\in B\backslash\{i\} \\
    & &\beta\rowIndex{i}Q\theta& = &-\beta\rowIndex{i}(q+\epsLexVector)&\\
    & &\beta''\rowIndex{k} Q\theta-t& \geq& -\beta''\rowIndex{k}(q+\epsLexVector),& \forall k\in B''\backslash\{\bar j\} \\
    & &\beta\rowIndex{\bar j}Q\theta& = &-\beta\rowIndex{\bar j}(q+\epsLexVector),&
  \end{array}
\end{equation}
where $\beta:=A\columnIndex{B}\inv$ and $\beta'':=A\columnIndex{B''}\inv$.  The two critical domains are
adjacent, i.e. $\dim(S^\epsilon_B\cap S^\epsilon_{B''})=d-1$, in $S^\epsilon$ for all $\epsilon>0$
sufficiently small if and only if $t^*(\epsilon)>0$ for all sufficiently small
$\epsilon>0$. 

\bigskip
\subsubsection{Symbolic computation of the parametric Chebyshev center problem}
As seen in the previous subsection, the goal is to decide whether the optimal value $t^*(\epsilon)$
of~\eqref{eq:parametricLP_facet} (and of~\eqref{eq:parametricLP_adjacency}) is positive for all sufficiently
small $\epsilon>0$.  We call this decision problem a {\em parametric Chebyshev center problem}.  Here we
introduce a method that can compute exactly the behavior of $t^*(\cdot)$ for sufficiently small positive
$\epsilon$; the proposed approach is summarized as Algorithm~\ref{alg:solve_dual_lex_chebyshev}. The procedure
is explained only for~\eqref{eq:parametricLP_facet}, since the same method can be easily applied
for~\eqref{eq:parametricLP_adjacency}. The goal is to transform~\eqref{eq:parametricLP_facet} into a
multi-objective LP that can then be solved with any LP-solver. Recall the parametric LP
\eqref{eq:parametricLP_facet}:
\begin{equation}
  \label{eq:parametricLP_facet_bis}
  \begin{array}{rcrclc}
    t^*(\epsilon):=&\text{max }& t &\\ 
    &\text{s.t.}&-\beta\rowIndex{j} Q\theta+t&\leq&\beta\rowIndex{j}(q+\epsLexVector)\enspace,&\text{for all }j\in B\backslash\{i\} \\
    & &-\beta\rowIndex{i}Q\theta&=&\beta\rowIndex{i}(q+\epsLexVector)\enspace.&
  \end{array}
\end{equation}
For a fixed value of $\epsilon$, this is a linear program and its dual is:
\begin{equation}\label{eq:facet_dualLP_eps}
  \begin{array}{rcrclc}
    t^*(\epsilon)=&\text{min} &  (\beta(q+\epsLexVector))^T\,y& & & \\
    &\text{s.t.}& -(\beta \,Q)^Ty&=&0&\\ 
    & & \sum_{j\neq i }y_j&=&1&\\ 
    & & y_j&\geq&0&\text{for all }j\in B\backslash\{i\}\\
    & & y_i& & \text{free}\enspace.&
  \end{array}
\end{equation}
Let us denote its feasible region by $F(i)$, that is,
\begin{equation}
  F(i) = \{y \in \Real^n | -(\beta \,Q)^Ty = 0, \sum_{j\neq i }y_j=1, y_j \geq 0 \text{ for all } j\in B\backslash\{i\} \}.
\end{equation}

In order to solve~\eqref{eq:facet_dualLP_eps} symbolically we introduce the following standard notion.
\begin{defi}
  [Lexico-positive] A vector $a\in \real^s$ is lexico-positive (denoted by $a\succ 0$), if $a \not= 0$ and the
  first non-zero component of $a$ is strictly positive. Given two vectors $x$ and $y \in \real^s$, we write
  $x\succ y$ if and only if $x-y \succ 0$. A matrix is called lexico-positive if all its rows are
  lexico-positive. If $S=\{s^i\}_{i\in I}$ is a set of vectors, then $s^j$ is the lexico minimum of S if and
  only if $s^i\succeq s^j$ for each $i\in I$.
\end{defi}

The following theorem demonstrates that minimizing the polynomial cost function of~\eqref{eq:facet_dualLP_eps}
is equivalent to computing the lexicographic minimum of a vector.
\begin{thm}\label{thm:lexpositivenss1}
  If $y$ is a feasible vector of the dual problem
  (\ref{eq:facet_dualLP_eps}), 
  then the two statements below are equivalent:
  \begin{enumerate}
  \item $\exists \delta>0 $ such that $(\beta(q+\epsLexVector))^Ty>0$ for all $\epsilon\in(0,\delta)\enspace$,
  \item $(\beta[q\,I])^Ty\succ 0$\enspace.
  \end{enumerate}
\end{thm}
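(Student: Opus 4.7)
My plan is to unpack both sides as facts about a single univariate polynomial in $\epsilon$ and then invoke the elementary principle that a polynomial is positive near $0^+$ if and only if its coefficient vector is lexico-positive.

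First, I would rewrite the quantity on the left. Since $\epsLexVector=(\epsilon,\epsilon^2,\dots,\epsilon^n)^T=\sum_{k=1}^n e_k\,\epsilon^k$ where $e_k$ is the $k$-th standard basis vector of $\real^n$, we get
\begin{equation*}
(\beta(q+\epsLexVector))^T y \;=\; (\beta q)^T y \;+\; \sum_{k=1}^n (\beta e_k)^T y\,\epsilon^k \;=:\; p(\epsilon),
\end{equation*}
which is a polynomial of degree at most $n$ in $\epsilon$. On the other side, the columns of the matrix $\beta[q\,I]\in\real^{n\times(n+1)}$ are precisely $\beta q,\beta e_1,\dots,\beta e_n$, so
\begin{equation*}
(\beta[q\,I])^T y \;=\; \bigl((\beta q)^T y,\;(\beta e_1)^T y,\dots,(\beta e_n)^T y\bigr)^T \;=\; (a_0,a_1,\dots,a_n)^T,
\end{equation*}
where $a_k$ is exactly the coefficient of $\epsilon^k$ in $p$. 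Thus the theorem reduces to the purely one-variable claim: $p(\epsilon)>0$ for all sufficiently small $\epsilon>0$ if and only if $(a_0,a_1,\dots,a_n)\succ 0$.

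For that reduced claim I would argue in both directions. If $(a_0,\dots,a_n)\succ 0$, then by definition this vector is nonzero and, letting $k^*$ be the index of its first nonzero entry, $a_{k^*}>0$ while $a_0=\dots=a_{k^*-1}=0$. Factor $p(\epsilon)=\epsilon^{k^*}\bigl(a_{k^*}+a_{k^*+1}\epsilon+\dots+a_n\epsilon^{n-k^*}\bigr)$; the bracketed tail tends to $a_{k^*}>0$ as $\epsilon\to 0^+$, so there is $\delta>0$ making the bracket strictly positive on $(0,\delta)$, whence $p(\epsilon)>0$ on $(0,\delta)$. Conversely, if $p(\epsilon)>0$ on some $(0,\delta)$, then $p\not\equiv 0$, so a first nonzero coefficient $a_{k^*}$ exists, and the same factorization forces $a_{k^*}>0$ (otherwise the bracket would be negative near $0$, contradicting positivity of $p$); together with $a_0=\dots=a_{k^*-1}=0$ this says $(a_0,\dots,a_n)\succ 0$.

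There is no real obstacle in this argument; the only thing to be careful about is the trivial case where all coefficients vanish, in which case $p\equiv 0$ (so statement~1 fails) and $(a_0,\dots,a_n)=0$ is not lexico-positive (so statement~2 fails), keeping the equivalence intact. Feasibility of $y$ for \eqref{eq:facet_dualLP_eps} is not used in the proof of the equivalence itself; it is only the enabling assumption that makes the quantity $p(\epsilon)$ meaningful as the dual objective of the parametric Chebyshev problem.
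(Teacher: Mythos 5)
Your argument is correct and follows the same route as the paper: rewrite $(\beta(q+\epsLexVector))^T y$ as the polynomial $(1,\epsilon,\dots,\epsilon^n)\,(\beta[q\,I])^T y$ and then apply the elementary fact that such a polynomial is positive near $0^+$ exactly when its coefficient vector is lexico-positive. You simply spell out the one-variable lemma (including the factorization argument and the all-zero edge case) that the paper states without proof.
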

\begin{proof}
  The statement follows from the equality
  $(\beta(q+\epsLexVector))^T\,y=(1,\epsilon,\epsilon^2,\dots,\epsilon^n)(\beta[q\; I]) ^T y$, which holds for
  all $y$ and for all $\epsilon$. For every polynomial $p(x)=(1,x,x^2,\dots,x^n) (p_0,p_1,p_2,\dots,p_n)^T$
  the following holds: there exists a $\delta>0$ such that $p(x)>0$ for all $x\in(0,\delta)$ if and only if
  the first non-zero coefficient of $(p_0,p_1,p_2,\dots,p_n)$ is positive, i.e. $(p_0,p_1,p_2,\dots,p_n)$ is
  lexico positive.
\end{proof}
We can now consider an equivalent problem that we call a \emph{lexicographic linear program} (lexLP):
\begin{equation}\label{eq:lexfacet_dualLP_symb}
  \redundancy(B,i):
  \left\{\begin{array}{rcrc}
      T^*:= &\operatorname{lexmin} &  (\beta[q,\,I])^T\,y&\\
      &\text{s.t.}& y&\in F(i)
    \end{array}\right.
\end{equation}
The cost of this optimization problem is vector valued and the operator $\operatorname{lexmin}$ means to
compute the lexicographic minimum vector $(\beta[q,\,I])^Ty$ over all feasible decision variables $y$. We will
denote this particular lexLP, which tests the redundancy of the $i$-th inequality in $S_B^\epsilon$, as the
function $\redundancy(B,i)$.
\begin{thm}
  If $t^*(\cdot)$ is the optimal value of~\eqref{eq:parametricLP_facet_bis} as a function of $\epsilon$ and
  $T^*$ is the optimal value of~\eqref{eq:lexfacet_dualLP_symb}, then the following holds:
  \begin{equation}
    \exists \delta>0 \text{ such that } t^*(\epsilon)>0 \mbox{ for all } \epsilon\in(0,\delta)\iff T^*\succ 0\enspace.
  \end{equation}
\end{thm}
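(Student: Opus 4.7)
The plan is to connect $t^*(\epsilon)$ to the lexLP \eqref{eq:lexfacet_dualLP_symb} via strong LP duality applied to \eqref{eq:parametricLP_facet_bis} and its dual \eqref{eq:facet_dualLP_eps}. For each fixed $\epsilon>0$, strong duality gives $t^*(\epsilon)=\min_{y\in F(i)}(\beta(q+\epsLexVector))^T y$. The key identity is that, for every $y\in\real^n$,
\[
(\beta(q+\epsLexVector))^T y \;=\; (1,\epsilon,\epsilon^2,\dots,\epsilon^n)\,(\beta[q,I])^T y\enspace,
\]
so the dual objective at $y$ is a polynomial in $\epsilon$ whose coefficient vector is exactly $(\beta[q,I])^Ty\in\real^{n+1}$. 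Theorem~\ref{thm:lexpositivenss1} therefore translates positivity-for-small-$\epsilon$ of the dual objective into lexico-positivity of the coefficient vector, and, applied to differences, gives the comparison form: if $(\beta[q,I])^T y \succeq (\beta[q,I])^T y'$ then $(\beta(q+\epsLexVector))^T y \ge (\beta(q+\epsLexVector))^T y'$ for all sufficiently small $\epsilon>0$.

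For the forward direction ($\Rightarrow$) I take any $y^*\in F(i)$ attaining the lexmin, so $T^*=(\beta[q,I])^T y^*$. Weak duality together with the hypothesis gives $(\beta(q+\epsLexVector))^T y^*\ge t^*(\epsilon)>0$ for all sufficiently small $\epsilon>0$, and Theorem~\ref{thm:lexpositivenss1} applied to $y^*$ yields $T^*\succ 0$ at once. The converse ($\Leftarrow$) is the substantive direction: assuming $T^*\succ 0$, the plan is to show that the lex-optimal $y^*$ remains an LP-optimum of \eqref{eq:facet_dualLP_eps} for all sufficiently small $\epsilon>0$, so that by strong duality $t^*(\epsilon)=(\beta(q+\epsLexVector))^T y^*$, which is positive for all small $\epsilon>0$ by Theorem~\ref{thm:lexpositivenss1}. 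To establish the optimality, I use that the LP minimum is attained at a basic feasible solution of $F(i)$ and that $F(i)$ has only finitely many vertices $V$. For every $v\in V$, lex-optimality of $y^*$ forces $(\beta[q,I])^T(v-y^*)\succeq 0$; either this difference is zero (in which case the two objectives agree identically in $\epsilon$), or it is strictly lexico-positive, and the extended form of Theorem~\ref{thm:lexpositivenss1} furnishes a threshold $\delta_v>0$ with $(\beta(q+\epsLexVector))^T v > (\beta(q+\epsLexVector))^T y^*$ on $(0,\delta_v)$. Setting $\delta:=\min_{v\in V}\delta_v>0$ makes $y^*$ optimal over $V$, hence over all of $F(i)$, for every $\epsilon\in(0,\delta)$.

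The main obstacle is exactly this stability argument in the converse: a priori, the identity of the LP-optimal vertex could depend on $\epsilon$, so we must pin down that the single lex-optimal vertex $y^*$ is simultaneously optimal for all sufficiently small $\epsilon>0$. Finiteness of the vertex set $V$ of the polyhedron $F(i)$ is what rescues the argument, since it allows us to take a positive minimum of finitely many thresholds $\delta_v$. A secondary bookkeeping subtlety is the handling of degenerate cases in which $F(i)$ is empty (dual infeasible, so the primal is unbounded or infeasible for each small $\epsilon$) or has a lineality space (extreme rays rather than vertices); these reduce to the pointed, bounded case by standard LP-duality considerations and do not affect the equivalence.
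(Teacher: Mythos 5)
Your proof is correct and supplies exactly the detail that the paper's one-line proof (``direct consequence of Theorem~\ref{thm:lexpositivenss1}'') leaves implicit. Theorem~\ref{thm:lexpositivenss1} on its own is a pointwise statement about a fixed feasible $y$, so the naive application yields only a $y$-dependent threshold $\delta_y$, and one cannot take the infimum over the infinite set $F(i)$; your restriction to the finite vertex set $V$ of $F(i)$ --- together with the fact that the dual minimum is attained at a vertex, which the paper secures by assuming $\beta\rowIndex{i}Q\neq 0$ so that $F(i)$ is bounded --- is precisely what produces the uniform $\delta$ and makes the claimed equivalence rigorous. The forward direction via weak duality at the lex-optimal $y^*$ and the converse via vertex stability (that $y^*$ remains LP-optimal for all sufficiently small $\epsilon$) are both sound, so this is a correct expansion of the paper's terse argument rather than a genuinely different route.
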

\begin{proof}
  The statement is a direct consequence of Theorem \ref{thm:lexpositivenss1}.
\end{proof}
Note that as is the case for linear programs, the restrictions and the objective function
of~\eqref{eq:lexfacet_dualLP_symb} are linear, although the objective returns a vector instead of a scalar. We
say that $T^*$ is the \emph{optimal value} of~\eqref{eq:lexfacet_dualLP_symb}.  If the vector $\beta_iQ$ is
non-zero then the feasibility region is bounded and therefore the optimal value is always attained if the
problem is feasible.

For our purposes, it is not necessary to compute the entire vector $T^*$, but only a sufficient number of its
elements in order to determine if it is lexico-positive or not. To this end, the goal is to find the first
non-zero component of $T^*$ and therefore the lex min problem~\eqref{eq:lexfacet_dualLP_symb} can be treated
as a multi-objective LP in the following way. First (say at step $0$) we solve the LP:

\begin{equation}\label{eq:dualLP_c_0}
  \begin{array}{rcrc}
    T_0:= &\text{min} &  c_0^T\,y&\\
    &\text{s.t.}& y& \in F(i),
  \end{array}
\end{equation}
where $c_0:= \beta\,q$. If $T_0\neq0$ then we can conclude that the optimal value $T^*$ of the
problem~\eqref{eq:lexfacet_dualLP_symb} is lexico-positive or lexico-negative from the sign of
$T_0$. Otherwise, if $T_0$ does equal zero, then we must consider the next objective function
$c_1:=\beta\rowIndex{1}$ and minimise it while maintaining $T_0=0$, and so on.

If $T_0=T_1=\dots=T_{r-1}=0$, then at the step $r$ we solve:
\begin{equation}\label{eq:dualLP_c_i}
  \begin{array}{rcrclc}
    T_r:= &\text{min} &  c_r^T\,y\\
    &\text{s.t.}& y& \in & F(i)\\ 
    & &c_k^T\,y&=&0,&k=0,\dots,r-1,\\
  \end{array}
\end{equation}
where $c_0=\beta\,q$ and $c_k=\beta\rowIndex{k}$ for $k=1,\dots,n$. If $T_r\neq0$ is the first non-zero value
of $T^*=(T_0,\dots,T_n)$ then $T^*$ of~\eqref{eq:lexfacet_dualLP_symb} is lexico-positive if $T_r > 0$ and
lexico-negative otherwise.  The resulting procedure is Algorithm \ref{alg:isLexPositive}, where the feasible
region of the LP (\ref{eq:dualLP_c_i}) is denoted by $F_r$.

\begin{algorithm}[H]
  \caption{Function $\textit{isLexPositive}(lexLP)$}\label{alg:solve_dual_lex_chebyshev}
  \label{alg:isLexPositive}
  \inputbox{A lex linear program $\textit{lexLP} $ as $\redundancy(\cdot,\cdot)$
    (\ref{eq:lexfacet_dualLP_symb}) or $\adjacency(\cdot,\cdot)$ (\ref{eq:lexadjacency_dualLP_symb}).}\\
  \outputbox{Answer about lexico-positiveness of the optimal value $T^*$ of~\eqref{eq:lexfacet_dualLP_symb}
    or~\eqref{eq:lexadjacency_dualLP_symb} respectively.}
  \begin{algorithmic}[1]
    \STATE Let $c_0,c_1, \dots,c_n$ be the objective functions of $\textit{lexLP} $ and $F_0$ be the
    feasibility region of $\textit{lexLP} $
    \FOR{$r=0$ to $n$}
    \STATE {$t_r^*:=\min\{c_r^Ty\,\vert\, \, y\in F_r\}$}\COMMENT{by solving the LP (\ref{eq:dualLP_c_i})} \IF
    { $t^*_r>0$ } \RETURN{$T^*$ is lexico-positive.}
    \ELSIF{$t^*_r<0$} \RETURN{ $T^*$ is lexico-negative.}  \ELSE \STATE $F_{r+1} := F_r\cap \{y\,\vert\,
    c_k^Ty=0\}$
    \ENDIF
    \ENDFOR
  \end{algorithmic}
\end{algorithm}

\begin{rem}
  Note that the lexLP $\adjacency(B,i)$ always has a non-zero optimal value $T^*$ because zero is not feasible
  in (\ref{eq:dualLP_c_i}) and the optimal solution $y^*$ of the last LP (\ref{eq:dualLP_c_i}), with $r=n$,
  must be optimal also for all previous LPs. Since $\beta$ has full rank we must have that $\beta y^*$ is
  non-zero and therefore there must be at least one component of $T^*$ that is non-zero.
\end{rem}

The parametric LP~\eqref{eq:parametricLP_adjacency} that determines whether two critical domains are adjacent
can also be solved using the same procedure. LP~\eqref{eq:parametricLP_adjacency} can be rewritten as a lex
min LP as follows:
\begin{equation}\label{eq:lexadjacency_dualLP_symb}
  \adjacency(B,B''):
  \left\{
    \begin{array}{rcrclc}
      T^*:= &\text{lexmin} &  (\beta[q,\,I])^T\,y+ (\beta''[q,\,I])^T\,x&\\
      &\text{s.t.}& -(\beta \,Q)^Ty-(\beta'' \,Q)^Tx&=&0&\\ 
      & & \sum_{k\in B\backslash\{ i\} }y_k+\sum_{k\in B''\backslash\{ \bar j\} }x_k&=&1&\\ 
      & & y_k&\geq&0,&k\in B\backslash\{i\}\\
      &  & x_k&\geq&0,&k\in B''\backslash\{\bar j\}\\
      &  & y_i ,x_{\bar j}& &\text{free}.& 
    \end{array}\right.
\end{equation}
We will call this lexLP $\adjacency(B,B'')$ because it tests the adjacency of $S_B$ and $S_{B''}$.  Recall
that one of the two equalities in~\eqref{eq:parametricLP_adjacency} can be removed because one is redundant
and therefore one of $y_i$ and $x_j$ is also redundant. Hence,~\eqref{eq:lexadjacency_dualLP_symb} has the
same structure as~\eqref{eq:lexfacet_dualLP_symb} and can be solved as a multi-objective LP as explained above
(see Algorithm~\ref{alg:solve_dual_lex_chebyshev}).

\subsection{Post-processing of the graph of critical domains $\GI^\epsilon$}
The previous section introduced a computational method for computing the graph of critical
domains~$\GI^\epsilon$ relative to the lex-perturbed space $S^\epsilon$ for all $\epsilon>0$ sufficiently
small. The goal in this section is to recover the graph $\GI=(V,E)$ of critical domains relative to the
original space $S$ according to Definition~\ref{def:graph_of_CD}.

The following theorems will show that one can construct from $\GI^\epsilon$ the graph $\GI$ of critical
domains relative to the unperturbed space $S$.
\begin{thm}\label{thm:lexPerturbation}
  Let $M$ be a sufficient matrix, $S$ an affine subspace and consider the graph of critical domains
  $G^\epsilon=(V^\epsilon,E^\epsilon)$ relative to the lexicographically perturbed space $S^\epsilon$ for
  sufficiently small $\epsilon>0$.  Let $V$ be the set of all bases $B$ in $V^\epsilon$ with full-dimensional
  critical domains $S_B$.  Then the following statements hold.
  \begin{enumerate}
  \item For each $B\in V^\epsilon$, the complementary cone $\CI(B)$ intersects $S$ and therefore $S_B$ is
    nonempty.
  \item For any two distinct bases $B_1$ and $B_2$ in $V$, $S_{B_1}$ and $S_{B_2}$ have disjoint relative
    interiors.\label{item:disjoint_rel_interior}
  \item The set of all full-dimensional critical domains $S_{B}$ for $B\in V$ covers $S_f$, i.e.,
    $$\bigcup_{B \in V}S_{B}=S_f = \KAPPA(M)\cap S\enspace,$$
    and forms a polyhedral decomposition of $S_f$.
  \end{enumerate}
\end{thm}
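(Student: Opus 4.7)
The plan is to establish the three claims in turn, using the general position of $S^\epsilon$ from Theorem~\ref{thm:lexpert_GP} to invoke the results of Section~3 for the perturbed problem, and then passing to the limit $\epsilon \to 0^+$ to transfer conclusions back to the unperturbed $S$. The main tools will be Farkas-style LP duality (for (1)), Proposition~\ref{prop:disjointCC} combined with the general position of $S^\epsilon$ (for (2)), and a density/closure argument based on the convexity of $S_f$ from Corollary~\ref{cor:Sf is convex} (for (3)).

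For part (1), I would use Farkas's lemma. Writing $\beta := A\columnIndex{B}\inv$, the critical region $\ERRE_B^\epsilon = \{\theta \in \real^d : \beta(Q\theta + q + \epsLexVector) \geq 0\}$ is nonempty iff every $y \geq 0$ with $(\beta Q)^T y = 0$ satisfies $(\beta(q + \epsLexVector))^T y \geq 0$. The hypothesis $B \in V^\epsilon$ gives this for all sufficiently small $\epsilon > 0$; for each such $y$ the left-hand side is a polynomial in $\epsilon$ whose constant term is $(\beta q)^T y$, so letting $\epsilon \to 0^+$ yields $(\beta q)^T y \geq 0$. Applying Farkas in the opposite direction shows $\ERRE_B$ is nonempty, hence so is $S_B = Q\ERRE_B + q$, which in particular means $\CI(B)$ meets $S$.

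For part (2), I first observe that the general position of $S^\epsilon$ forces $\relINT(S_B^\epsilon) \subseteq \INT(\CI(B))$ for every $B \in V^\epsilon$: any facet inequality $\beta_i y \geq 0$ that is redundant in $S_B^\epsilon$ must actually be strictly satisfied throughout $S_B^\epsilon$, for otherwise $S^\epsilon$ would touch $\CI(B) \cap \{\beta_i y = 0\}$ without meeting $\INT(\CI(B))$ along that facet, contradicting general position. Combined with Proposition~\ref{prop:disjointCC}, this gives $\relINT(S_{B_1}^\epsilon) \cap \relINT(S_{B_2}^\epsilon) = \emptyset$ for distinct $B_1, B_2 \in V^\epsilon$. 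To transfer to $V$, I would argue by contradiction: if $p \in \relINT(S_{B_1}) \cap \relINT(S_{B_2})$ for distinct $B_1, B_2 \in V$, then a $d$-dimensional relative neighborhood $U \subseteq S$ of $p$ lies in $\CI(B_1) \cap \CI(B_2)$; a lex-sign analysis of the polynomials $\epsilon \mapsto \beta^{(j)}_i(y + \epsLexVector)$ together with the requirement $B_j \in V^\epsilon$ (which rules out the lex-negative case that would make $S_{B_j}^\epsilon$ empty) shows that $U + \epsLexVector \subseteq \INT(\CI(B_1)) \cap \INT(\CI(B_2))$ for all sufficiently small $\epsilon > 0$, contradicting Proposition~\ref{prop:disjointCC}.

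For part (3), the inclusion $\bigcup_{B \in V} S_B \subseteq S_f$ is immediate. For the reverse, take $p$ in the relative interior of $S_f$, so that $p \in \INT(\KAPPA(M))$; then $p + t\epsLexVector \in \KAPPA(M) \cap S^t = S_f^t$ for all small $t > 0$, and by the already-established perturbed decomposition $p + t\epsLexVector \in S_B^t$ for some $B \in V^t$. Since $V^\epsilon$ stabilises to a single finite set for sufficiently small $\epsilon > 0$, pigeonhole supplies a fixed $B$ working along a sequence $t_k \to 0^+$; taking the limit gives $p \in S_B$, and a further pigeonhole applied to a full-dimensional neighborhood of $p$ in $S_f$ forces $\dim S_B = d$, i.e.\ $B \in V$. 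Hence $\relINT(S_f) \subseteq \bigcup_{B \in V} S_B$; since the right-hand side is a finite union of closed polyhedra and $S_f$ is itself a polyhedron (Corollary~\ref{cor:Sf is convex}), taking closures yields equality. Combined with part (2) and the fact that each $S_B$, $B \in V$, is a $d$-dimensional polyhedron contained in $S_f$, this is exactly the polyhedral decomposition property listed in Section~\ref{sec:critical domains}. The step I expect to be most delicate is the lex-sign argument in part~(2): it is what formalises why the perturbation selects a unique representative among critical domains that coincide, or have overlapping relative interiors, in the unperturbed problem.
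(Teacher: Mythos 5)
Your overall route is the same as the paper's: establish everything for the perturbed space via general position of $S^\epsilon$ (Theorem~\ref{thm:lexpert_GP}, Propositions~\ref{prop:disjointCC} and~\ref{prop:disjointCD}) and then pass to $\epsilon\to0^+$. For parts (1) and (2) you are in fact more explicit than the paper, which settles (1) by closedness of complementary cones and (2) by a continuity remark; your Farkas argument for (1) and your lex-sign argument for (2) are sound substitutes. One small overstatement in (2): the uniform claim $U+\epsLexVector\subseteq\INT(\CI(B_1))\cap\INT(\CI(B_2))$ for \emph{all} of $U$ need not hold, because a row of $\beta^{(j)}$ that is not identically zero on $S$ may well be lex-negative and can wipe out points of $U$ close to where that row vanishes; but you only need the single point $p\in\relINT(S_{B_1})\cap\relINT(S_{B_2})$, at which every such row is strictly positive, so $p+\epsLexVector$ already lies in both interiors for small $\epsilon$ and contradicts Proposition~\ref{prop:disjointCC}.

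The step that genuinely fails is in part (3): from $p\in\relINT(S_f)$ you infer $p\in\INT(\KAPPA(M))$. This implication is false, and it fails precisely in the degenerate situations the theorem is designed for: $S_f=S\cap\KAPPA(M)$ may lie entirely in a proper face of $\KAPPA(M)$. For instance $M=0$ is sufficient with $\KAPPA(M)=\cone(I)$ the nonnegative orthant, and if $S$ is contained in the hyperplane $\{y\,\vert\,y_n=0\}$ then every point of $S_f$, relatively interior or not, is a boundary point of $\KAPPA(M)$, while the theorem still holds there. Fortunately the conclusion you draw from interiority — that the perturbed point stays in $\KAPPA(M)$ and hence in some complementary cone — does not need it: this is exactly the paper's argument, namely $\epsLexVector\geq0$ lies in $\cone(I)\subseteq\KAPPA(M)$ and $\KAPPA(M)$ is a convex cone by Proposition~\ref{prop:convex}, so $q+\epsLexVector\in\KAPPA(M)$ for \emph{every} $q\in S_f$. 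With that substitution, your pigeonhole over the finitely many bases, the limit giving $p\in S_B$, and the final closure argument discarding the non-full-dimensional domains reproduce the paper's proof of the covering statement (and, like the paper, implicitly use that the full-dimensional domains are dense in $S_f$).
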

\begin{proof}
  First, note that complementary cones are closed and so the first statement follows directly.

  To prove the second, note that for all $\epsilon>0$ sufficiently small (say $\epsilon<\delta$) $S^\epsilon$
  lies in general position. From Proposition~\ref{prop:adjcencyCD_CC} the critical domains defined by
  $S^\epsilon$ are disjoint in their interiors for any positive $\epsilon<\delta$. The second statement
  follows from the fact that for any basis $B$, $S^\epsilon\cap \CI(B)$ changes continuously in $\epsilon$,
  for $\epsilon< \delta$.

  The third statement is proven in two steps. First we prove that the critical domains whose bases are in
  $V^\epsilon$ define a covering of $S\cap \KAPPA(M)$. Let $q\in \KAPPA(M)\cap S $, since $\epsLexVector\in
  K(M)$ and $\KAPPA(M)$ is a convex cone, there exists a basis $B$ such that $q+\epsLexVector\in \CI(B)$ for
  all $\epsilon>0$ sufficiently small. Therefore $q$ is in $ \CI(B)$ and hence in $S_B$ because complementary
  cones are closed and thus $\bigcup_{B \in V^\epsilon}S_{B}= S_f=\KAPPA(M)\cap S $. Since critical domains
  are closed and $S_f$ is a convex polyhedron, the full-dimensional critical domains define a covering of
  $S_f$.
\end{proof}

The above theorem demonstrates that the bases in $V^\epsilon$ have nonempty critical domains in the
unperturbed space $S$. Moreover, there exists a subset $V$ whose critical domains form a polyhedral
decomposition of $S_f$ according to Definition~\ref{def:graph_of_CD}. The next theorem discusses how adjacency
in $\GI^\epsilon$ relates to adjacency in $\GI$.
\begin{thm}\label{thm:adjacency_path}
  Let $M$ be a sufficient matrix and $B_1, B_2$ be two complementary bases in $V\subset V^\epsilon$,
  i.e. $S_{B_1}$ and $S_{B_2}$ both have dimension $d$. If $S_{B_1}$ and $S_{B_2}$ are adjacent, then there
  exists a path $\tilde B^1,\tilde B^2,\dots,\tilde B^r$ in $\GI^\epsilon=(V^\epsilon,E^\epsilon)$ from $B_1$
  to $B_2$ with the following property: $S_{\tilde B^i}$ intersects $S_{B_1}\cap S_{B_2}$ with dimension
  $d-1$, i.e. $\dim(S_{\tilde B^i}\cap S_{B_1}\cap S_{B_2})=d-1$ for all $i=1,\dots,r$.

\end{thm}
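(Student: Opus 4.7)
My plan is to construct the path by reading off the sequence of perturbed critical domains crossed by a generic line segment that punctures the common facet $F:=S_{B_1}\cap S_{B_2}$. Since $\dim F=d-1$ and $S_{B_1},S_{B_2}$ have disjoint relative interiors by part~(2) of Theorem~\ref{thm:lexPerturbation}, $F$ is a genuine common $(d-1)$-dimensional face. Pick $p\in\relINT(F)$ and a direction $v\in\real^d$ transverse to $\aff(F)$ such that, for small $\tau>0$, the endpoints $p_1:=p-\tau v$ and $p_2:=p+\tau v$ of the segment $\ell:=[p_1,p_2]$ lie in $\relINT(S_{B_1})$ and $\relINT(S_{B_2})$ respectively; then $\ell\setminus\{p\}\subseteq\relINT(S_{B_1})\cup\relINT(S_{B_2})$ and $\ell$ avoids the relative interior of every other full-dimensional critical domain in $S$.

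For all sufficiently small $\epsilon>0$ the shifted segment $\ell+\epsLexVector\subseteq S^\epsilon$ has its endpoints in $\INT S_{B_1}^\epsilon$ and $\INT S_{B_2}^\epsilon$. Theorem~\ref{thm:lexPerturbation} guarantees that the full-dimensional critical domains of $S^\epsilon$ cover $S_f^\epsilon$ with pairwise disjoint relative interiors, so they partition the connected segment $\ell+\epsLexVector$ into a finite ordered chain of subsegments, giving a sequence $\tilde B^1=B_1,\tilde B^2,\ldots,\tilde B^r=B_2$ in $V^\epsilon$. Choosing $p$ and $v$ off a measure-zero ``bifurcation'' set ensures that each transition point between consecutive subsegments lies in the relative interior of the common facet of $S_{\tilde B^j}^\epsilon$ and $S_{\tilde B^{j+1}}^\epsilon$; this intersection has dimension $d-1$, so $(\tilde B^j,\tilde B^{j+1})\in E^\epsilon$ and the sequence is a path in $\GI^\epsilon$.

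To establish $\dim(S_{\tilde B^j}\cap F)=d-1$ the cases $j\in\{1,r\}$ are immediate, so fix a middle index $j$. Take a $(d-1)$-dimensional ball $D\subset\relINT(F)$ around $p$ and for each $p'\in D$ consider the translated segment $\ell_{p'}:=\ell+(p'-p)$. For generic $p$ the combinatorial sequence of crossings of $\ell_{p'}+\epsLexVector$ is locally constant in $p'$, so the midpoint $q^\epsilon(p')$ of the subsegment $(\ell_{p'}+\epsLexVector)\cap S_{\tilde B^j}^\epsilon$ is an affine function of $p'\in D$ of rank $d-1$, with image a $(d-1)$-dimensional subset of $S_{\tilde B^j}^\epsilon$. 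Since $\ell_{p'}\setminus\{p'\}$ is contained in $\relINT(S_{B_1})\cup\relINT(S_{B_2})$, which is disjoint from the relative interiors of all other full-dimensional critical domains, the unperturbed midpoint $q^\epsilon(p')-\epsLexVector$ converges to $p'$ as $\epsilon\to 0$; closedness of the complementary cone $\CI(\tilde B^j)$ then yields $p'\in S_{\tilde B^j}$. Hence $D\subseteq S_{\tilde B^j}\cap F$ and the intersection has dimension exactly $d-1$.

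The main obstacle is the genericity/stability argument underlying the local constancy of the combinatorial sequence of crossings, which I would formalise by noting that the bifurcation locus in $\aff(F)$ is a finite union of hyperplanes (since only finitely many critical domains intersect any fixed compact neighbourhood of $\ell$) and can therefore be avoided by a generic choice of $p$ and $v$, uniformly in all sufficiently small $\epsilon>0$.
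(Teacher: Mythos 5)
Your construction is in essence the paper's own proof: choose a generic point in $\relINT(S_{B_1}\cap S_{B_2})$, draw a short segment through it transverse to the $(d-1)$-dimensional intersection with endpoints in the two relative interiors, shift it by $\epsLexVector$, and obtain the path from the chain of perturbed critical domains decomposing the shifted segment; your first two paragraphs are a correct (and somewhat more explicit) version of that argument. (A small slip: $S_{B_1}\cap S_{B_2}$ need not be a common \emph{face} of the two domains, since the decomposition need not be a polyhedral complex; what you actually use -- that near a generic $p$ the two domains fill the two sides of $\aff(S_{B_1}\cap S_{B_2})$ -- is fine, but deserves a sentence.)

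The gaps are in your treatment of $\dim(S_{\tilde B^j}\cap S_{B_1}\cap S_{B_2})=d-1$ for the middle bases. First, the step you yourself flag is the real technical core and is only asserted: the ``bifurcation locus'' is not a fixed finite union of hyperplanes but moves with $\epsilon$, so ``generic $p$, uniformly in all sufficiently small $\epsilon$'' requires an argument -- e.g.\ that the finitely many sign conditions determining the crossing pattern of $\ell_{p'}+\epsLexVector$ are polynomial in $\epsilon$, hence have stabilized signs for all sufficiently small $\epsilon>0$, and that the stabilized pattern is locally constant in $p'$ off a set of dimension at most $d-2$. Without this, nothing guarantees that the \emph{same} basis $\tilde B^j$ is crossed by all translated segments, which is exactly what places the ball $D$ inside $S_{\tilde B^j}$. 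Second, your convergence argument for the midpoints invokes disjointness of relative interiors of full-dimensional critical domains in $S$, but the accumulation point is only known to lie in $S_{\tilde B^j}$ (not in $\relINT(S_{\tilde B^j})$), and $S_{\tilde B^j}$ need not be full-dimensional in $S$, so Theorem~\ref{thm:lexPerturbation}(2) does not apply as invoked; nor can you use $\relINT(S_{B_1})\subseteq\INT(\CI(B_1))$, which relies on general position and may fail for the unperturbed $S$. A correct route goes through the perturbed space: since $B_1\in V\cap V^\epsilon$ and $\dim S_{B_1}=d$, every implicit equality of $S_{B_1}$ comes from a row $\beta_k$ of $A\columnIndex{B_1}\inv$ vanishing identically on $S$, and then $\beta_k\epsLexVector>0$ for all small $\epsilon$ because $S^\epsilon_{B_1}\neq\emptyset$; hence every compact subset of $\relINT(S_{B_1})$ shifts into $\INT(\CI(B_1))$ for all small $\epsilon$, and disjointness of cone interiors (Proposition~\ref{prop:disjointCC}, or Proposition~\ref{prop:disjointCD} applied to $S^\epsilon$) forces the crossing subsegment with $S^\epsilon_{\tilde B^j}$ to collapse onto $p'$, giving $p'\in S_{\tilde B^j}$ by closedness. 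With these two repairs your argument goes through; note that the paper's own proof is terse at precisely these points.
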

\begin{proof}
  If $S_{B_1}^\epsilon$ and $S_{B_2}^\epsilon$ are adjacent, $(B_1,B_2)$ is clearly the desidered path. We
  assume they are not adjacent.  We choose a $\bar q \in \relINT(S_{B_1}\cap S_{B_2})$ which is not contained
  in any critical domain or in any face of dimension $d-2$ or less, and let $\bar \theta \in \real^d$ be the
  parameter with $\bar q=q+Q\bar \theta$.  We look now (for a moment) at the parameter space and at the
  critical regions. The hyperplane $f:=\{\theta\, \vert\, a^T\theta=b\}$ contains the intersecion of the two
  critical regions, i.e. $f\supseteq (\ERRE_{B_1}\cap \ERRE_{B_2})$ and consider its perpendicular (normal?)
  line $\theta(t)=\bar \theta+t a$. The image of $\theta(t)$ is $q(t)=\bar q + t Q a$ in the original space
  $S$, respectively $q^\epsilon(t)=\bar q+\epsLexVector + t Q a$ in the perturbed space $S^\epsilon$.

  We know that for each $\epsilon>0$ sufficiently small every critical domain becomes either full-dimensional
  or empty. The full-dimensional ones vary continuously in function with $\epsilon$. Consider a segment
  $[q^\epsilon(t_1),q^\epsilon(t_2)]$ of the line $\{q^\epsilon(t)~\vert~ t\in\Real\}$ such that it intersects
  either $S_{B_1}^\epsilon$ and $S_{B_2}^\epsilon$ for all $\epsilon>0$ sufficiently small. Because of the
  continuity no critical domain, which has dimension smaller than $d-1$ in the original space $S$ intersects
  this segment for all $\epsilon>0$ sufficiently small. Similarly, for any $B\in V^\epsilon$ no face of
  $S_B^\epsilon$ of dimension smaller than $d-1$ intersects $[q^\epsilon(t_1),q^\epsilon(t_2)]$ for all
  $\epsilon>0$ sufficiently small.  The desidered path is given by the critical domains which decompose the
  line segment between $S_{B_1}$ and $S_{B_2}$.
\end{proof}

Note that the last condition in the above theorem, along with Proposition~\ref{prop:disjointCD}, implies that
$dim(S_{\tilde B^i})=d-1$ for $i=2,\dots,r-1$ and therefore Theorems~\ref{thm:lexPerturbation}
and~\ref{thm:adjacency_path} imply the following corollary.
\begin{cor} \label{cor:postadjacency} Let $M$ be a sufficient matrix, $S$ an affine subspace and let
  $\GI^\epsilon=(V^\epsilon,E^\epsilon)$ be the graph of critical domains relative to $S^\epsilon$. Then, the
  graph of critical domains $\GI=(V,E)$ relative to $S$ is related to $\GI^\epsilon$ as follows:
  \begin{enumerate}
  \item $V\subseteq V^\epsilon$
  \item For every basis $B\in V$, $S_B$ has dimension $d$.
  \item For each pair of bases $B_1$ and $B_2$, the critical domains $S_{B_1}$ and $S_{B_2} $ are adjacent if
    and only if there exists a path $(\tilde B_1,\dots ,\tilde B_r)$ in $\GI^\epsilon$ with $\tilde B_1=B_1$,
    $\tilde B_r=B_2$ and $dim(S_k)=d-1$ for $k=2,\dots,r-1$ (or $(B_1,B_2)\in E^\epsilon$ ).
  \end{enumerate}
\end{cor}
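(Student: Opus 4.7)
The plan is to derive all three items as bookkeeping consequences of Theorems~\ref{thm:lexPerturbation} and~\ref{thm:adjacency_path}, together with one elementary ``openness'' argument.

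First, for items (1) and (2), recall that $V$ was introduced in Definition~\ref{def:graph_of_CD} as the set of complementary bases $B$ with $\dim(S_B)=d$. Item (2) is then tautological, so the real content of items (1)--(2) is that such a basis already appears in $V^\epsilon$, i.e., $S_B^\epsilon$ is also full-dimensional for all sufficiently small $\epsilon>0$. I would prove this by picking $\theta^*\in\relINT(\ERRE_B)$, so that $q+Q\theta^*\in\INT(\CI(B))$; since $\INT(\CI(B))$ is open, the perturbed point $q+Q\theta^*+\epsLexVector$ remains in $\INT(\CI(B))$ for all small $\epsilon>0$, which yields $S_B^\epsilon\ne\emptyset$ and (by Assumption~\ref{assu:generalposition} applied to $S^\epsilon$, guaranteed by Theorem~\ref{thm:lexpert_GP}) $\dim(S_B^\epsilon)=d$. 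Hence $B\in V^\epsilon$.

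For the forward direction of item (3), if $S_{B_1}$ and $S_{B_2}$ are adjacent in $S$ then Theorem~\ref{thm:adjacency_path} directly produces a path $(\tilde B^1,\ldots,\tilde B^r)$ in $\GI^\epsilon$ whose intermediate bases satisfy $\dim(S_{\tilde B^k}\cap S_{B_1}\cap S_{B_2})=d-1$; in particular $\dim(S_{\tilde B^k})=d-1$ for $k=2,\ldots,r-1$. The degenerate case $r=2$ gives the alternative $(B_1,B_2)\in E^\epsilon$. For the converse, suppose such a path exists. Each intermediate critical domain $S_{\tilde B^k}$ has dimension $d-1$ and sits in the common hyperplane $\aff(S_{\tilde B^k})\subseteq S$; moreover, by Proposition~\ref{prop:disjointCD} applied to $S^\epsilon$ together with the continuity of $\CI(B)\cap S^\epsilon$ in $\epsilon$, consecutive members of the path share a common $(d-1)$-dimensional face in $S$. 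Chaining these shared facets along the path shows that they all lie in a single hyperplane $H\subset S$, and the union $\bigcup_{k=2}^{r-1}S_{\tilde B^k}\cup\{B_1,B_2\}$-adjacent boundary pieces produces a $(d-1)$-dimensional piece of $S_{B_1}\cap S_{B_2}$, giving adjacency.

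I expect the converse of item (3) to be the main obstacle, because one must rule out the possibility that the $(d-1)$-dimensional pieces $S_{\tilde B^k}$ live in \emph{different} hyperplanes of $S$, which would mean the path jumps between distinct facets of $S_{B_1}$ and $S_{B_2}$. The right way to handle this is the line-perturbation trick already used in the proof of Theorem~\ref{thm:adjacency_path}: choose a generic segment transverse to the candidate common facet, track how it is decomposed by the $S_{\tilde B^k}^\epsilon$ for small $\epsilon>0$, and observe that continuity in $\epsilon$ forces all intermediate hyperplanes to coincide with $\aff(S_{B_1}\cap S_{B_2})$ (else the decomposition would fail to be consistent with the disjoint-relative-interior property, Proposition~\ref{prop:disjointCD}). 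Once this coincidence is established, the three items of the corollary are immediate.
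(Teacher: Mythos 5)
There are two genuine gaps, and the first one is fatal under your reading of the statement. For items (1)--(2) you take $V$ to be the set of Definition~\ref{def:graph_of_CD} (all bases with $\dim(S_B)=d$) and argue that $\theta^*\in\relINT(\ERRE_B)$ gives $q+Q\theta^*\in\INT(\CI(B))$, so that the perturbed point stays in the open interior. That inclusion is exactly what fails when $S$ is \emph{not} in general position, which is the whole setting here: a critical domain $S_B$ can be $d$-dimensional while lying entirely in the boundary of $\CI(B)$. The paper's own Example~\ref{ex:no_GP_counterexample} is a counterexample to your claim: there $S_{B_3}=S_{B_4}$ is a full-dimensional ray contained in a common boundary ray of $\CI(B_3)$ and $\CI(B_4)$, and for the lexicographic shift one checks that $S^\epsilon\cap\CI(B_3)=\emptyset$ for all small $\epsilon>0$, so $B_3\notin V^\epsilon$ even though $\dim(S_{B_3})=d$. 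Hence with $V$ read as in Definition~\ref{def:graph_of_CD}, item (1) is simply false; the corollary must be read with the $V$ constructed in Theorem~\ref{thm:lexPerturbation}, namely the subset of $V^\epsilon$ consisting of bases whose unperturbed $S_B$ is full-dimensional (the perturbation selects one representative among coinciding or overlapping domains). With that reading, items (1) and (2) hold by construction and need no openness argument --- which is how the paper treats them.

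For item (3), the forward direction via Theorem~\ref{thm:adjacency_path} (plus the remark preceding the corollary that forces $\dim(S_{\tilde B^k})=d-1$ for the intermediate bases) is indeed the paper's route. Your converse, however, is only a sketch: the two load-bearing claims --- that consecutive members of the path share a $(d-1)$-dimensional face in the \emph{unperturbed} $S$, and that all these pieces lie in the single hyperplane $\aff(S_{B_1}\cap S_{B_2})$ --- are asserted rather than proved. Continuity of $\CI(B)\cap S^\epsilon$ in $\epsilon$ does not yield the first claim, since a $(d-1)$-dimensional intersection $S^\epsilon_{\tilde B^k}\cap S^\epsilon_{\tilde B^{k+1}}$ can collapse to dimension $d-2$ or less at $\epsilon=0$; and Proposition~\ref{prop:disjointCD} cannot be invoked in $S$ itself, because $S$ is not in general position. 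You acknowledge this is the main obstacle, but the appeal to ``the line-perturbation trick'' is not carried out: one would have to pick a point in the relative interior of a candidate $(d-1)$-dimensional piece, avoid all lower-dimensional faces, and rerun the segment argument of Theorem~\ref{thm:adjacency_path} in reverse. Note also that the paper itself does not treat this direction as free of charge: in the post-processing it re-tests each candidate edge (by the same LPs used for redundancy/adjacency) to decide whether the connected bases are in fact adjacent in the unperturbed space, so your proposal is claiming more with less justification than the source.
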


The above corollary provides a simple procedure for computing a critical region graph $\GI=(V,E)$ relative to
the unperturbed affine set $S$ from the perturbed one $\GI^\epsilon$. We begin from the perturbed critical
region graph $\GI = (V,E) := \GI^\epsilon$ and remove each node $B$ from $\GI$ that has a critical domain
$S_B$ which is not full-dimensional and add all new edges $(B_1,B_2)$ to $E$ satisfying the statement 3 of
Corollary \ref{cor:postadjacency}. From Theorem~\ref{thm:lexPerturbation}, the critical domains of the nodes
of the resulting graph will form the desired polyhedral covering of the
$S_f$. Theorem~\ref{thm:adjacency_path} states that the resulting graph contains edges for all adjacent bases,
but may be overconnected since some of the critical domains $S_B$ of removed bases $B$ may have had a
dimension less than $d-1$. It remains, therefore, to test each edge in order to determine if the connected
bases are in fact adjacent in the unperturbed space. As discussed previously, both operations for testing
full-dimensionality and adjacency can be posed as linear programs.

\begin{rem}
  Note that much of the computation required to test for full-dimensionality of the critical domains for the
  unperturbed affine set has already been done while building the perturbed graph. Specifically, one can
  determine if a region is full-dimensional by examining the first component $T_0$ of the optimizer of
  LP~\eqref{eq:lexfacet_dualLP_symb}.
\end{rem}

\section{Complexity of the algorithm}\label{section:complexity}
In this section we will discuss the complexity of the proposed algorithm, which enumerates all
full-dimensional critical domains relative to the lexicographically perturbed affine subspace
$S^\epsilon$. The well-known example by Murty (see~\cite{murty78} or see Chapter~6
in~\cite{murty_onlinebook}), which was used to prove the non-polynomiality of the Lemke and the principal
pivoting methods, can be easily seen to demonstrate that the number of critical domains of a pLCP with an
affine subspace $S$ of dimension $1$ and $\PI$-matrix $M$ is exponential in $n$.  Since the complexity of the
graph search (Algorithm~\ref{alg:graph_search}) is a polynomial function of the number of critical domains, no
algorithm for pLCP is polynomial in $n$.  It is, however, possible to bound the number of operations required
to explore the neighborhood of each critical domain, i.e. the complexity of Algorithms~\ref{alg:neighbors}
and~\ref{alg:neighbors_lex}. Since each critical domain will be explored exactly once, we can say that the
algorithm is output sensitive in that its complexity is a polynomial function of the number of
full-dimensional critical domains and the size of input, provided that a polynomial-time algorithm for linear
programming is used.

We first consider the general position case and study the complexity of Algorithm~\ref{alg:neighbors}. Assume
that $M$ is of order $n$, the affine subspace $S$ is of dimension $d$ and let $B$ be a complementary basis
with a nonempty critical domain $S_B$.  The main computations of the function $neighbors(B)$
(Algorithm~\ref{alg:neighbors}) are:
\begin{itemize}
\item Redundancy checking at Line~\ref{line:alg:redundancy}\\
  (Solve LP~\eqref{eq:facet_dualLP_eps} with $n$ variables and $d+1$ constraints.)
\item Checking adjacency for the case of an exchange pivot at Line~\ref{line:alg:adjacency}\\
  (Solve LP~\eqref{eq:adjacencyCheck_exchange_pivot_non_degenerate} with $2n$ variables and $d+1$
  constraints.)
\end{itemize}
We denote the time necessary to solve an LP in standard form by $\timeLP{\textit{var}}{\textit{eq}}$, where
$\textit{var}$ denotes the number of (nonnegative) variables and $\textit{eq}$ is the number of equality
constraints. The time necessary to explore a critical domain can then be bounded as follows.
\begin{thm}
  Let $M\in\real^{n \times n}$ be a sufficient matrix and assume that the affine subspace $S$ lies in general
  position. For each complementary basis $B$ with a nonempty critical domain $S_B$ the time necessary to
  explore the neighborhood of $S_B$ is bounded by:
  \begin{equation}\label{eq:bound_LP_GP}
    n \, \timeLP{n}{d+1} + \frac{n^2-n}{2} \, \timeLP{2n}{d+1}.
  \end{equation}
\end{thm}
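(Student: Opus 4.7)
The plan is a direct operation-counting argument over Algorithm~\ref{alg:neighbors}. I would first discard the non-LP work: computing $\beta := A\columnIndex{B}\inv$ and the dictionary $D := -\beta A\columnIndex{N}$ are each $O(n^3)$, and the sign inspections of entries of $D$, the set manipulations and the constructions of $B'$ and $B''$ are $O(n^2)$. Each of these is dominated by a single LP solve appearing in~\eqref{eq:bound_LP_GP}, so only the two families of LP calls (Lines~\ref{line:alg:redundancy} and~\ref{line:alg:adjacency}) need to be counted carefully.

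For the redundancy checks on Line~\ref{line:alg:redundancy}, the outer loop over $i \in B$ triggers at most $n$ calls, one per basis index. Each call solves LP~\eqref{eq:LP_facet}; passing to its dual~\eqref{eq:facet_dualLP_eps} yields a program with $n$ variables indexed by $B$ and $d+1$ equality constraints (the $d$ rows of $(\beta Q)^\top y = 0$ together with the normalization $\sum_{j \neq i} y_j = 1$). This contributes $n \cdot \timeLP{n}{d+1}$. For the adjacency checks on Line~\ref{line:alg:adjacency}, the candidate basis $B'' = B \setminus \{i,j\} \cup \{\bar i, \bar j\}$ depends only on the unordered pair $\{i,j\} \subseteq B$, so after memoizing pairs already tested (equivalently, iterating only over pairs with a fixed orientation) the inner test is run at most $\binom{n}{2} = \tfrac{n^2-n}{2}$ times, consistent with the neighbor bound in Remark~\ref{rem:bounded_adj_cones}. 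Each such call solves LP~\eqref{eq:adjacencyCheck_exchange_pivot_non_degenerate}, whose dual has $2n$ variables (indexed by $B \cup B''$) and $d+1$ equalities, so it costs \timeLP{2n}{d+1}. Summing yields the second summand of~\eqref{eq:bound_LP_GP}.

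The only non-routine point, and the one I would take care to spell out, is the factor $\tfrac{1}{2}$ in the adjacency count: the literal pseudocode visits each unordered pair $\{i,j\}$ from both sides, so without some form of memoization the naive bound would be $n(n-1)$ instead of $\binom{n}{2}$. What saves us is that Proposition~\ref{prop:adjacencyCC} shows the adjacency predicate and the resulting basis $B''$ are symmetric in $i$ and $j$, hence a single test per unordered pair loses no adjacent critical domain. With this observation in place, adding the redundancy and adjacency contributions gives exactly~\eqref{eq:bound_LP_GP}.
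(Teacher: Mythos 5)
Your decomposition of the cost matches the paper's: discard the $O(n^3)$ arithmetic, count $n$ redundancy LPs of size $\timeLP{n}{d+1}$ via the dual~\eqref{eq:facet_dualLP_eps}, and count at most $\binom{n}{2}$ adjacency LPs of size $\timeLP{2n}{d+1}$. The paper's own proof is exactly this, stated tersely: ``there are at most $\frac{n^2-n}{2}$ such bases'' $B''$, each tested once.

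The one place where your reasoning diverges, and is in fact a bit off, is the justification for the $\tfrac{1}{2}$ factor. You argue that the pseudocode would naively test each unordered pair $\{i,j\}$ from both directions and invoke memoization to halve the count, citing Proposition~\ref{prop:adjacencyCC} as showing ``the adjacency predicate is symmetric in $i$ and $j$.'' But the predicate in that proposition is the adjacency of $\CI(B'')$ to $\CI(B)$ \emph{along the specific facet} $\CI(B\backslash\{i\})$, and that is \emph{not} symmetric in $i$ and $j$: its characterization is $D_{i\bar i}=0$ and $D_{j\bar i}<0$, which treats $i$ and $j$ asymmetrically. What actually prevents double-counting is geometric, not algorithmic: for a sufficient matrix the complementary cones have pairwise disjoint interiors (Proposition~\ref{prop:disjointCC}), and a short convexity argument shows a given $\CI(B'')$ cannot intersect two distinct facets $\CI(B\backslash\{i\})$ and $\CI(B\backslash\{j\})$ of $\CI(B)$ each in dimension $n-1$ --- taking a point in the relative interior of each intersection and forming neighborhoods within the two facets, their convex hull would be full-dimensional and meet $\INT(\CI(B))\cap\INT(\CI(B''))$, a contradiction. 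Hence each candidate $B''$ is reachable via exactly one facet $i$, the dictionary conditions fire for only one orientation of the pair, and the raw loop already runs at most $\binom{n}{2}$ adjacency LPs with no bookkeeping added. So your bound is right and the approach is the paper's; just replace ``memoization plus symmetry of the predicate'' with ``disjoint interiors force each exchange-pivot neighbor to appear under a unique $i$.''
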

\begin{proof}
  Redundancy checking requires the solution of LP~\eqref{eq:facet_dualLP_eps} once for each of the $n$
  inequalities of $S_B$, which takes $n \, \timeLP{n}{d+1}$ time. Adjacency checking by solving the
  LP~\eqref{eq:adjacencyCheck_exchange_pivot_non_degenerate} is necessary only in the case that the considered
  adjacent basis $B''$ differs by two elements from the basis $B$ and since there are at most
  $\frac{n^2-n}{2}$ such bases, the second term $\frac{n^2-n}{2} \, \timeLP{2n}{d+1}$ follows.
\end{proof}
If $S$ does not lie in general position, then the lexLPs (\ref{eq:lexfacet_dualLP_symb}) and
(\ref{eq:lexadjacency_dualLP_symb}) are solved instead of LPs~\eqref{eq:LP_facet}
and~\eqref{eq:adjacencyCheck_exchange_pivot_non_degenerate}. Each lexLP can be solved as a sequence of at most
$n+1$ LPs with the same variables and constraints (see Algorithm~\ref{alg:solve_dual_lex_chebyshev}), which
leads to the following complexity bound.
\begin{thm}
  If $M\in\real^{n \times n}$ is a sufficient matrix, then for each complementary basis $B$ with nonempty
  critical domain $S^\epsilon_B$ the time necessary to explore the neighborhood of $S^\epsilon_B$ can be
  bounded by
  \begin{eqnarray}\label{eq:bound_LP_nonGP}
    (n^2+n)\, \timeLP{n}{d+1} + \frac{n^3-n}{2} \,\timeLP{2n}{d+1}.
  \end{eqnarray}
\end{thm}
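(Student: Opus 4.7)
The strategy is to mirror the proof of the general-position bound, but to account for the fact that each ordinary LP is now replaced by a lexLP, which by Algorithm~\ref{alg:solve_dual_lex_chebyshev} is resolved as a sequence of at most $n+1$ standard LPs over essentially the same constraint set. So I would simply count the lexLPs that Algorithm~\ref{alg:neighbors_lex} invokes, multiply by $n+1$, and observe that the per-LP sizes are unchanged from the general-position analysis.

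First I would bound the redundancy calls. Line~\ref{line:alg:redundancy_lex} runs $\textit{isLexPositive}(\redundancy(B,i))$ once for each $i\in B$, so at most $n$ times. Each such invocation, by Algorithm~\ref{alg:solve_dual_lex_chebyshev}, solves at most $n+1$ LPs of the form~\eqref{eq:dualLP_c_i} built from the dual~\eqref{eq:facet_dualLP_eps}, and these LPs have $n$ non-negative variables and $d+1$ equality constraints (absorbing the additional $c_k^Ty=0$ equalities into the $\timeLP{n}{d+1}$ notation as in the general position case). The total cost of all redundancy checks is therefore $n\cdot(n+1)\,\timeLP{n}{d+1}=(n^2+n)\,\timeLP{n}{d+1}$, which is precisely the first term.

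Next I would bound the exchange-pivot adjacency calls. By Remark~\ref{rem:bounded_adj_cones}, $B$ has at most $\tfrac{n^2-n}{2}$ candidate neighbors obtained by exchange pivots, so Line~\ref{line:alg:adjacency_lex} runs $\textit{isLexPositive}(\adjacency(B,B''))$ at most $\tfrac{n^2-n}{2}$ times. Each such call again invokes at most $n+1$ LPs by Algorithm~\ref{alg:solve_dual_lex_chebyshev}, now of the shape derived from~\eqref{eq:lexadjacency_dualLP_symb} with $2n$ non-negative variables and $d+1$ equality constraints, taking time $\timeLP{2n}{d+1}$ each. Multiplying gives $\tfrac{(n^2-n)(n+1)}{2}\,\timeLP{2n}{d+1}=\tfrac{n^3-n}{2}\,\timeLP{2n}{d+1}$, which is the second term.

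Summing the two contributions yields the claimed bound~\eqref{eq:bound_LP_nonGP}. There is no real obstacle here beyond being careful about the $n+1$ iterations in Algorithm~\ref{alg:solve_dual_lex_chebyshev} and matching sizes to the general-position complexity; the rest is arithmetic.
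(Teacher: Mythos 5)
Your proposal is correct and matches the paper's own argument: the paper obtains the bound by replacing each LP of the general-position analysis with a lexLP solved as a sequence of at most $n+1$ LPs of the same size, exactly as you count ($n$ redundancy checks and at most $\tfrac{n^2-n}{2}$ exchange-pivot adjacency checks, each multiplied by $n+1$). The only informality — treating the extra equalities $c_k^Ty=0$ in~\eqref{eq:dualLP_c_i} as not changing the LP size — is present in the paper as well, so nothing is missing.
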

The above theorems bound the complexity of ``exploring'' a basis of the output in
Algorithm~\ref{alg:graph_search} (Line~\ref{line:exploration}). The condition at
Line~\ref{line:explored_condition}, which can be verified in time bounded by the logarithm of the size of the
output, ensures that each output basis is explored exactly once. As a result, the complexity of the algorithm
grows linearly with the size of the output and so is output sensitive.

\section{Example}
In this section we present a simple illustrative example that arises from control theory. Consider the
following discrete time constrained linear time-invariant system:
\begin{align*}
  x^+ = \begin{pmatrix}1 & 1\\0 & 1
  \end{pmatrix} x + \begin{pmatrix} 1 \\ 0.5
  \end{pmatrix}u\enspace,   \\
  ||x||_\infty \leq 5,~~~||u||_\infty \le 1\enspace,
\end{align*}
where $x \in \real^2$ is the system state, $x^+$ is the successor state and $u \in \real$ is the system
input. A common method of control for this class of systems is Model Predictive Control, in which we solve at
each point in time the following finite horizon optimal control problem:
\begin{align}\label{ex:mpc}
  J^\star (\theta) = &\min \sum_{k=0}^{N-1}\limits\begin{pmatrix} \norm{Q x_k}_2^2 + \norm{R
      u_k}_2^2 \end{pmatrix}
  + \norm{Q_f x_N}_2^2\nonumber\\
  & \text{subject to}\nonumber\\
  &\begin{aligned} &x_{k+1} = \begin{pmatrix}1 & 1\\0 & 1
    \end{pmatrix} x_{k} + \begin{pmatrix} 1 \\ 0.5
    \end{pmatrix}u_{k}   \\
    &||x_k||_\infty \leq 5\enspace,~~~||u_{k-1}||_\infty \le 1\enspace,~~~~\forall k\in\left\{1,\dots,N\right\}\enspace\\
    &x_0 = \theta\enspace,
  \end{aligned}
\end{align}
where $\theta$ is the current state of the system, the prediction horizon $N$ is $5$ and the weighting
matrices $Q$, $R$ and $Q_f$ are the identity. For high-speed systems, such as electric power converters, the
goal is to solve the above quadratic program as rapidly as possible, in some cases at rates exceeding hundreds
of kilohertz~(e.g.~\cite{BecEtal:2007:IFA_2907}). By computing the optimizer offline as an explicit
piecewise-affine function of the state $\theta$, these speeds can be
achieved~\cite{seron:goodwin:dona:2000,johansen:peterson:slupphaug:2000,BMDP02a}. The above parametric
quadratic program is easily converted to a pLCP with a positive semi-definite matrix
$M$~\cite{murty_onlinebook}, which was then solved using the proposed algorithm. The resulting polyhedral
partition and mapping from the parameter $\theta$ to the optimizer $u_0$ is shown in
Figure~\ref{fig:doubleInt}.

\begin{figure}
  \centering \resizebox{0.48\columnwidth}{!}{\includegraphics{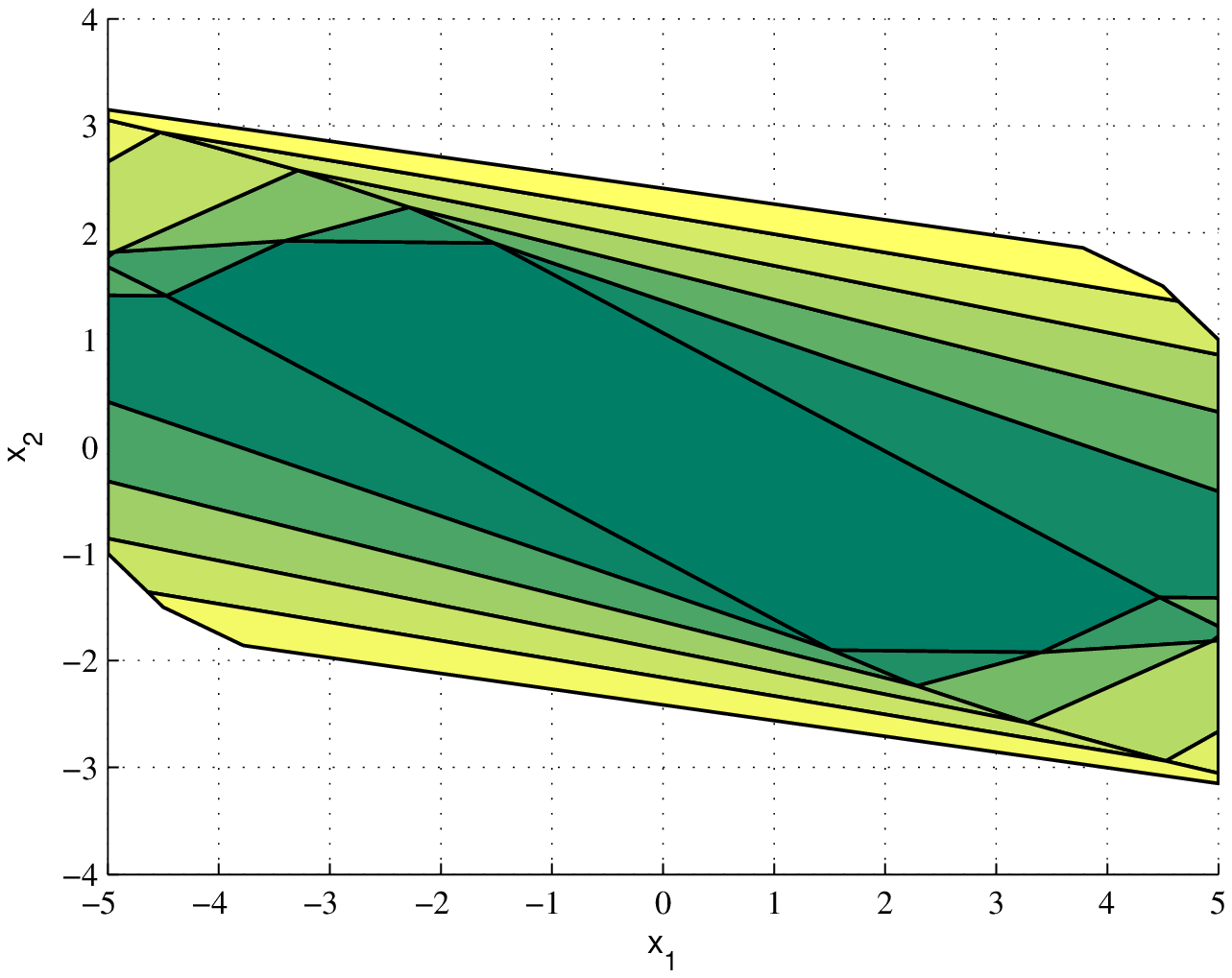}}
  \resizebox{0.48\columnwidth}{!}{\includegraphics{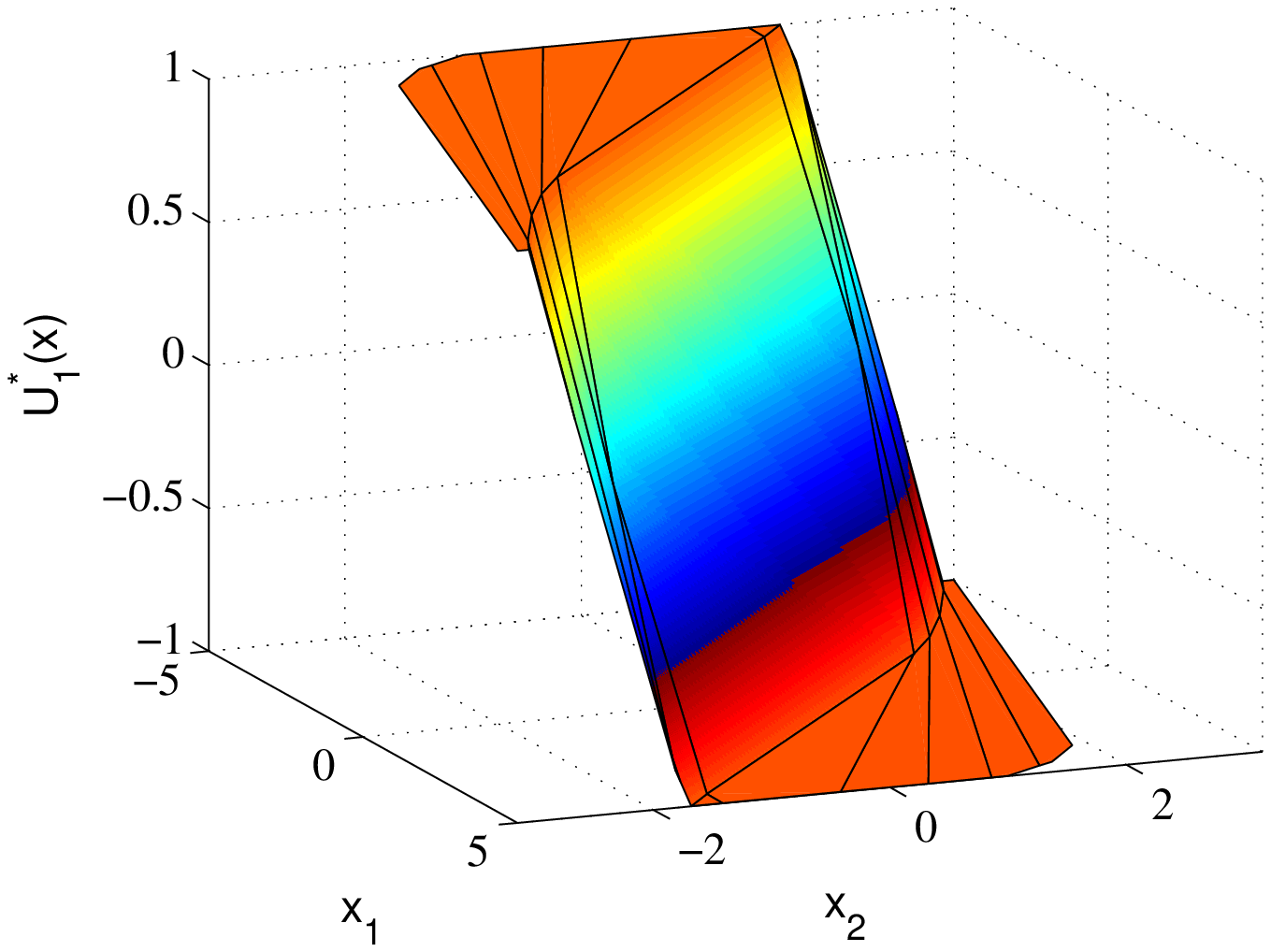}}
  \caption{Polyhedral partition (left) and optimal input (right) for the control problem~\eqref{ex:mpc}}
  \label{fig:doubleInt}
\end{figure}

\section{Conclusion}
In this paper an algorithm to enumerate all feasible bases of the parametric LCP defined by a sufficient
matrix $M$ and a lexicographically perturbed affine subspace $S$ was proposed. It has been shown that the
perturbed parametric LCP can be solved in a time linearly bounded by the size of the output and moreover, this
output can be efficiently post-processed in order to generate a polyhedral decomposition for the unperturbed
original affine subspace $S$.

One feature of the algorithm which is not ideal is the space requirement.  Namely, the proposed algorithm must
store all discovered feasible bases in the memory because it relies on the standard graph search technique.  A
great improvement can be made if we could apply the reverse search technique \cite{af-rse-96} which is
essentially memory free.  For this, it is necessary for the underlying graph to be oriented properly with
exactly one sink.  Somewhat similar to the present work, the paper \cite{fjt-cgf-07} proposed an algorithm to
compute a polyhedral complex known as the Gr\"obner fan which was shown to have such a ``reverse search
property.''  Finding such an orientation for the graph of critical domains is an excellent subject of the
future research.

\bibliographystyle{plain} \bibliography{paper}

\begin{thebibliography}{10}

\bibitem{af-rse-96}
D.~Avis and K.~Fukuda.
\newblock Reverse search for enumeration.
\newblock {\em Discrete Applied Mathematics}, 65:21--46, 1996.

\bibitem{klatte}
G.~Bank, J.~Guddat, D.~Klatte, B.~Kummer, and D.~Tammer.
\newblock {\em Non-linear Parametric Optimization}.
\newblock Akademie-Verlag Berlin, 1983.

\bibitem{BecEtal:2007:IFA_2907}
A.G. Beccuti, G.~Papafotiou, R.~Frasca, and M.~Morari.
\newblock {Explicit Hybrid Model Predictive Control of the dc-dc Boost
  Converter}.
\newblock In {\em IEEE PESC}, Orlando, Florida, USA, June 2007.

\bibitem{BMDP02a}
A.~Bemporad, M.~Morari, V.~Dua, and E.~N. Pistikopoulos.
\newblock The explicit linear quadratic regulator for constrained systems.
\newblock {\em Automatica}, 38(1):3--20, January 2002.

\bibitem{blanchini99}
F.~Blanchini.
\newblock Set invariance in control - a survey.
\newblock {\em Automatica}, 35(11):1747--1768, November 1999.

\bibitem{cottle}
R.W. Cottle, J.-S. Pang, and R.E. Stone.
\newblock {\em The Linear Complementarity Problem}.
\newblock Academic Press, 1992.

\bibitem{danao_1997_On_pLCP}
R.A. Danao.
\newblock On the parametric linear complementarity problem.
\newblock {\em Journal of Optimization Theory and Applications},
  95(2):445--454, November 1997.

\bibitem{DeSDeM:93-69}
B.~{De Schutter} and B.~{De Moor}.
\newblock The extended linear complementarity problem.
\newblock {\em Mathematical Programming}, 71(3):289--325, December 1995.

\bibitem{fjt-cgf-07}
K.~Fukuda, A.~Jensen, and R.~Thomas.
\newblock Computing {G}r{\"o}bner fans.
\newblock {\em Mathematics of Computation}, 76:2189--2212, 2007.

\bibitem{fnt-eptlc-98}
K.~Fukuda, M.~Namiki, and A.~Tamura.
\newblock {EP} theorems and linear complementarity problems.
\newblock {\em Discrete Applied Mathematics}, 84:107--119, 1998.

\bibitem{ft-lcom-92}
K.~Fukuda and T.~Terlaky.
\newblock Linear complementarity and oriented matroids.
\newblock {\em Journal of the Operations Research Society of Japan}, 35:45--61,
  1992.

\bibitem{handbook}
J.E. Goodman and J.~O'Rourke, editors.
\newblock {\em Handbook of discrete and computational geometry}.
\newblock CRC Press, Inc., Boca Raton, FL, USA, 1997.

\bibitem{hooker1992}
J.~Hooker.
\newblock Logical inference and polyhedral projection.
\newblock pages 184--200. 1992.

\bibitem{johansen:peterson:slupphaug:2000}
T.~A. Johansen, I.~Petersen, and O.~Slupphaug.
\newblock On explicit suboptimal {LQR} with state and input constraints.
\newblock In {\em Decision and Control, 2000. Proceedings of the 39th IEEE
  Conference on}, volume~1, pages 662--667 vol.1, 2000.

\bibitem{jones_2008_proj_mplp}
C.~N. Jones, E.~C. Kerrigan, and J.~M. Maciejowski.
\newblock On polyhedral projection and parametric programming.
\newblock {\em Journal of Optimization Theory and Applications}, 137(3), June
  2008.

\bibitem{Colin_pLP_Algorithm}
C.N. Jones, J.M. Maciejowski, and E.C. Kerrigan.
\newblock Lexicographic perturbation for multiparametric linear programming
  with applications to control.
\newblock {\em Automatica}, 43(10):1808--1816, October 2007.

\bibitem{Jones_Morari}
C.N. Jones and M.~Morari.
\newblock {Multiparametric Linear Complementarity Problems}.
\newblock In {\em IEEE Conference on Decision and Control}, December 2006.

\bibitem{cassandra_1998_pomdp}
L.P. Kaelbling, M.L. Littman, and A.R. Cassandra.
\newblock Planning and acting in partially observable stochastic domains.
\newblock {\em Artificial Intelligence}, 101:99--134, 1998.

\bibitem{klatte_1985_pLCP_structure}
D.~Klatte.
\newblock On the {L}ipschitz behavior of optimal solutions in parametric
  problems of quadratic optimization and linear complementarity.
\newblock {\em Optimization}, 16:819--831, 1985.

\bibitem{kmny-uaipa-91}
M.~Kojima, N.~Meggiddo, T.~Noma, and A.~Yoshise.
\newblock {\em A unified approach to interior point algorithms for linear
  complementarity problems}, volume 538 of {\em Lecture Notes in Computer
  Science}.
\newblock Springer-Verlag, 1991.

\bibitem{murty78}
K.~G. Murty.
\newblock Computational complexity of complementary pivot methods.
\newblock {\em Mathematical Programming Study 7}, pages 61--73, 1978.

\bibitem{murty_onlinebook}
K.~G. Murty and F.~T. Yu.
\newblock {\em Linear Complementarity, Linear and Nonlinear Programming}.
\newblock Helderman-Verlag, 1988.

\bibitem{ponce95}
J.~Ponce, S.~Sullivan, A.~Sudsang, J.~Boissonnat, and J.~Merlet.
\newblock On computing four-finger equilibrium and force-closure grasps of
  polyhedral objects.
\newblock {\em International Journal of Robotics Research}, February 1995.

\bibitem{program_analysis_2007}
S.~Sankaranarayanan, F.~Ivan\v{c}i\'{c}, and A.~Gupta.
\newblock Program analysis using symbolic ranges.
\newblock pages 366--383. 2007.

\bibitem{seron:goodwin:dona:2000}
M.~M. Seron, G.~C. Goodwin, and J.~A. De~Don{\'a}.
\newblock Geometry of model predictive control for constrained linear systems.
\newblock Technical Report EE0031, The University of Newcastle, Australia,
  2000.

\bibitem{tammer_1996_pLCP}
K.~Tammer.
\newblock Parametric linear complementarity problems.
\newblock [Online: Stand 2008-04-29T15:56:22Z].

\end{thebibliography}

\appendix
\section{Useful properties of matrix classes}\label{APPENDIX}
This section gives an overview of some matrix classes with important properties for linear complementarity
problems. The reader is referred to~\cite{cottle} for a thorough survey.\\

\subsection*{$\PI$-Matrices}
\begin{defi}
  The matrix $M\in \real^{n\times n}$ is a $\PI$-matrix if and only if all principal minors of $M$ are
  strictly positive.
\end{defi}
This class characterizes the matrices $M$ for which the corresponding LCP always has a unique solution.
\begin{thm}
  The following statements are equivalent:
  \begin{enumerate}
  \item $M\in \PI$,
  \item The LCP defined by the matrix $M$ has a unique solution for all right hand side vectors
    $q\in\real^{n}$,
  \item $M$ does not reverse the sign of any nonzero vectors, i.e. $$[z_i(Mz)_i\leq0 \mbox{ for all }
    i]\Rightarrow[z=0]\enspace.$$
  \end{enumerate}
\end{thm}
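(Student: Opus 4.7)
The plan is to establish the equivalence via the cyclic chain (1) $\Rightarrow$ (3) $\Rightarrow$ (2) $\Rightarrow$ (1), since each step naturally uses the conclusion of the previous one.

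For (1) $\Rightarrow$ (3) (Fiedler--Pt\'ak), I would proceed by contradiction. Assume a nonzero $z$ satisfies $z_i(Mz)_i \leq 0$ for all $i$. The first trick is to normalize the sign: let $D$ be the diagonal matrix with $D_{ii}=1$ if $z_i\ge 0$ and $D_{ii}=-1$ otherwise. Since $D^2=I$, the matrix $DMD$ has exactly the same principal minors as $M$ (the row and column flips cancel on each principal submatrix), hence is still a $\PI$-matrix. Setting $y:=Dz\ge 0$, one checks $y_i(DMDy)_i = z_i(Mz)_i \leq 0$. Restrict to $I:=\{i:y_i>0\}$: then $y_I > 0$ componentwise and $(DMD)_{II} y_I \le 0$, and $(DMD)_{II}$ is again a $\PI$-matrix because principal submatrices of $\PI$-matrices are $\PI$-matrices. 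The conclusion is that it suffices to prove the statement for strictly positive $y$, which I would do by induction on $n$: the base case $n=1$ is immediate since $m_{11}>0$, and the inductive step uses a Schur-complement argument to pass to an $(n-1)\times(n-1)$ $\PI$-matrix, yielding the required contradiction.

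For (3) $\Rightarrow$ (2), I separate uniqueness from existence. Uniqueness is the easy half and is the key place property (3) is used: given two LCP solutions $(w^1,z^1)$ and $(w^2,z^2)$ for the same $q$, set $z:=z^1-z^2$ and $w:=Mz=w^1-w^2$. Expanding $z_iw_i=(z^1_i-z^2_i)(w^1_i-w^2_i)$ and using $z^k,w^k\geq 0$ together with the complementarity conditions $z^k_iw^k_i=0$, every term is non-positive, so $z_i(Mz)_i\le 0$ for all $i$; property (3) then forces $z=0$, and hence $w=0$. Existence is the harder half. I would argue it via the classical theorem of Samelson--Thrall--Wesler: under (3), the piecewise-linear map $F:\real^n\to\real^n$ defined by $F(z)_i:=\min\{z_i,(Mz+q)_i\}$ (whose zeros are in bijection with LCP solutions) is a homeomorphism, so $F^{-1}(0)$ is a singleton for every $q$. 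The key is to show $F$ is injective (which is exactly the sign-reversal property (3) applied to a difference of two inputs) and proper, then invoke invariance of domain.

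For (2) $\Rightarrow$ (1), I would prove the contrapositive: if some principal minor $\det M_{II}$ is non-positive, exhibit a $q$ for which the LCP has either no solution or multiple solutions. The connection is that the complementary basis $B$ with $B\cap\{1,\dots,n\}=I^c$ and $B\cap\{n+1,\dots,2n\}=I+n$ has basis determinant $\pm\det M_{II}$. If $\det M_{II}=0$, then $B$ is not a basis and the corresponding complementary cone degenerates, breaking the tiling property needed for unique solvability; if $\det M_{II}<0$, one shows that the cone $\CI(B)$ and an adjacent complementary cone overlap with full-dimensional intersection, so any $q$ in the overlap's interior admits at least two complementary solutions, contradicting uniqueness.

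The main obstacle will be the existence half of (3) $\Rightarrow$ (2), since the injectivity-plus-properness argument via invariance of domain, while clean conceptually, hides a real topological step; an alternative would be a direct combinatorial proof via Lemke's algorithm, showing that termination in a secondary ray is ruled out by property (3). The Fiedler--Pt\'ak step (1) $\Rightarrow$ (3) is standard but its inductive Schur-complement bookkeeping is the other technically delicate point.
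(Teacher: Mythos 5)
The paper does not prove this theorem; it appears in the appendix of background facts on matrix classes, and the appendix preamble explicitly defers all such results to \cite{cottle} (this is the standard combination of Fiedler--Pt\'ak for $(1)\Leftrightarrow(3)$ and Samelson--Thrall--Wesler for $(1)\Leftrightarrow(2)$; see Theorem~3.3.7 there). So there is no in-paper proof to compare against, and the relevant question is whether your sketch is sound.

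Your outline follows the classical textbook route, and the sign-normalization plus restriction-to-$I$ reduction in $(1)\Rightarrow(3)$ and the difference-of-solutions argument for uniqueness in $(3)\Rightarrow(2)$ are both correct and cleanly stated. You are also right to single out the two genuine pressure points: the existence half of $(3)\Rightarrow(2)$ (properness plus invariance of domain, or alternatively Lemke termination) and the finishing step of $(1)\Rightarrow(3)$. For the latter, a slicker alternative to the Schur-complement induction is worth knowing: from $Au\le 0$, $u>0$, set $\Lambda:=\operatorname{diag}(-(Au)_i/u_i)\ge 0$ so that $(A+\Lambda)u=0$; but for $A\in\PI$ and $\Lambda$ diagonal nonnegative, $\det(A+\Lambda)>0$ by the principal-minor expansion, a contradiction. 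The thinnest part of your sketch is $(2)\Rightarrow(1)$: the claim that $\det M_{II}<0$ forces two complementary cones to overlap with full-dimensional intersection is the right intuition (it is essentially the orientation/degree argument behind Samelson--Thrall--Wesler), but it needs justification rather than assertion, and the $\det M_{II}=0$ subcase is not immediate from ``the cone degenerates'' --- a degenerate cone does not by itself produce either nonexistence or multiplicity of LCP solutions without further work (e.g.\ a perturbation from the negative-determinant case, or a direct construction of a $q$ on which two full-dimensional cones overlap). These are fixable gaps consistent with a sketch, but they are real.
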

Recall that $M\in \real^{n\times n}$ is a positive definite matrix, denoted with $PD$ if for all $x\in\real^n$
it holds that $x^TMx>0$. It is then easy to see from the above theorem that positive definite matrices belong
to the class $\PI$.

\subsection*{$\PI_0$-matrices}
\begin{defi}
  The matrix $M \in \real^{n\times n}$ is a $\PI_0$-matrix if and only if all principal minors of $M$ are
  non-negative.
\end{defi}
Analogously to the positive-definite case above, positive-semidefinite matrices (PSD) are clearly in
$\PI_0$. The following theorem gives properties of PSD matrices relevant to the solution of pLCPs.
\begin{thm}\label{thm:P_0_characterisation}
  Let $M\in \real^{n\times n}$ be a matrix and $I$ be the identity matrix of the same order. The following
  statements are equivalent:
  \begin{enumerate}
  \item $M\in \PI_0$,
  \item For each vector $x\neq 0$ there exists an index $k$ such that $z_k\neq0$ and $z_k(Mx)_k\geq0$,
  \item $(M+\epsilon I)$ is a $\PI$ matrix for all $ \epsilon >0 $.
  \end{enumerate}
\end{thm}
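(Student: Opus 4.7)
The plan is to establish the cycle $(1) \Rightarrow (3) \Rightarrow (2) \Rightarrow (3) \Rightarrow (1)$, using the previous theorem's characterization of $\PI$-matrices (item~3 of the preceding theorem: $M\in\PI$ iff $x_k(Mx)_k \leq 0$ for all $k$ implies $x=0$, equivalently, for every nonzero $x$ some index $k$ satisfies $x_k(Mx)_k > 0$).

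First I would prove $(1) \Rightarrow (3)$ by a determinant expansion. For any index set $I\subseteq\{1,\dots,n\}$, the principal minor of $M+\epsilon I$ on $I$ satisfies
\begin{equation*}
\det\bigl((M+\epsilon I)_{II}\bigr) = \sum_{J\subseteq I} \epsilon^{|I|-|J|}\det(M_{JJ}),
\end{equation*}
which follows by multilinearity of the determinant in the rows (or by expanding $\det(M_{II}+\epsilon I_{|I|})$ combinatorially). Under hypothesis~(1) every $\det(M_{JJ})\geq 0$, and the $J=\emptyset$ term contributes $\epsilon^{|I|}>0$, so every principal minor of $M+\epsilon I$ is strictly positive for $\epsilon>0$; hence $M+\epsilon I\in\PI$.

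Next I would prove $(3) \Rightarrow (2)$ by a compactness/limiting argument. Fix $x\neq 0$. For each $\epsilon>0$, applying the $\PI$-matrix characterization from the previous theorem to $M+\epsilon I$ yields an index $k_\epsilon$ with $x_{k_\epsilon}\bigl((M+\epsilon I)x\bigr)_{k_\epsilon}>0$, in particular $x_{k_\epsilon}\neq 0$. Choose a sequence $\epsilon_m\downarrow 0$; since there are only finitely many indices, some $k$ occurs infinitely often, so along a subsequence $x_k(Mx)_k + \epsilon_m x_k^2 > 0$ for all $m$. Passing to the limit gives $x_k(Mx)_k \geq 0$ with $x_k\neq 0$, which is~(2).

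The implication $(2) \Rightarrow (3)$ is essentially immediate. Given any $x\neq 0$, pick the index $k$ provided by~(2); then
\begin{equation*}
x_k\bigl((M+\epsilon I)x\bigr)_k = x_k(Mx)_k + \epsilon x_k^2 \geq \epsilon x_k^2 > 0,
\end{equation*}
so $M+\epsilon I$ satisfies the $\PI$-matrix characterization from the previous theorem, for every $\epsilon>0$. Finally, $(3) \Rightarrow (1)$ follows because each principal minor $\det\bigl((M+\epsilon I)_{II}\bigr)$ is a polynomial in $\epsilon$ which is strictly positive for every $\epsilon>0$; letting $\epsilon\to 0^+$ gives $\det(M_{II})\geq 0$.

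The main conceptual obstacle is the step $(3) \Rightarrow (2)$: one must bridge the strict inequality ``some index $k_\epsilon$ works'' for each perturbed matrix with the existence of a single index $k$ that works in the limit. The finite pigeonhole plus subsequence argument handles this cleanly, but it is the only step that is not a one-line computation. All other steps are either algebraic identities (the minor expansion) or direct application of the preceding theorem's characterization of $\PI$.
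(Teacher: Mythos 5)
Your proof is correct. It is worth noting up front that the paper does not prove this theorem itself — it is stated in the appendix as a cited fact from the standard LCP literature — so there is no internal argument to compare against; what follows is therefore a check of your argument on its own merits. (You also correctly read the $z_k$ in item~2 as the typo it is: the intended variable is $x_k$.)

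Every step holds up. The $(1)\Rightarrow(3)$ direction rests on the standard characteristic-polynomial expansion $\det(M_{II}+\epsilon I_{|I|})=\sum_{J\subseteq I}\epsilon^{|I|-|J|}\det(M_{JJ})$, with the $J=\emptyset$ term (which is $\epsilon^{|I|}$ by the convention $\det(M_{\emptyset\emptyset})=1$) supplying strict positivity — this is the right mechanism and you stated it precisely. The $(3)\Rightarrow(1)$ direction is the clean limiting argument on those same polynomials, and $(2)\Rightarrow(3)$ is the one-line algebraic computation you indicate, invoking the ``does not reverse sign of any nonzero vector'' characterization of $\PI$-matrices from the preceding theorem. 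The $(3)\Rightarrow(2)$ step is the only place requiring care, and your finite-pigeonhole-plus-subsequence argument closes it correctly: the index $k_\epsilon$ may jitter with $\epsilon$, but there are only $n$ candidates, so one of them recurs along a sequence $\epsilon_m\downarrow 0$, and the limit of the strict inequalities $x_k(Mx)_k+\epsilon_m x_k^2>0$ gives the non-strict $x_k(Mx)_k\ge 0$ together with $x_k\neq 0$. Your implication chain $(1)\Rightarrow(3)\Rightarrow(2)\Rightarrow(3)\Rightarrow(1)$ does cover all six pairwise implications, so the equivalence is established. This is a standard and efficient route through the result; a slightly more economical scheme would be $(1)\Rightarrow(3)\Rightarrow(2)\Rightarrow(1)$ if one is willing to prove $(2)\Rightarrow(1)$ directly via minors, but that direction is genuinely harder than your $(2)\Rightarrow(3)\Rightarrow(1)$ detour, so your choice is the better one.
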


\subsection*{Semimonotone matrices}
\begin{defi}
  A matrix $M\in \real^{n\times n}$ is called \emph{semimonotone} if the following holds:
  \begin{equation}
    \text{for all } x\geq 0, x\neq 0 \Rightarrow [x_k>0 \text{ and } (Mx)_k\geq0 \text{ for some }k]\enspace.
  \end{equation}
\end{defi}
The class of such matrices is denoted by $E_0$ and by Theorem~\ref{thm:P_0_characterisation}, every
$\PI_0$-matrix is semimonotone.
\begin{defi}\label{def:fullysemimonotone}
  Let $M\in \real^{n\times n}$ be a semimonotone matrix.  If for all index subsets $\alpha \subseteq
  \{1,\dots,n\}$ with $det(M_{\alpha \alpha})\neq 0$ the \emph{principal pivot transform of $M$ with respect
    to $\alpha$}
  \[M':=\begin{bmatrix}
    M^{-1}_{\alpha \alpha} & - M^{-1}_{\alpha \alpha} M_{\alpha \bar{\alpha}}\\
    M_{\bar{\alpha} \alpha } M^{-1}_{\alpha \alpha} & M^{-1}_{\bar \alpha \bar \alpha} - M_{\bar{\alpha}
      \alpha } M^{-1}_{\alpha \alpha} M_{\alpha \bar{\alpha}}
  \end{bmatrix}\] is semimonotone, then $M$ is called \emph{fully semimonotone}. The class of such matrices is
  denoted with $\fullySemimo$ and $M$ is said to be an $\fullySemimo$-matrix.
\end{defi}

\subsection*{$\QU_0$-Matrices}
\begin{defi}
  An LCP defined by the matrix $M$ and right hand side vector $q$ is called \emph{weakly feasible} if there
  exist positive vectors $z$ and $w$ such that $w-Mz=q$ and \emph{feasible} if $z'w=0$ also holds. The class
  of matrices $M$ for which the LCP is feasible whenever it is weakly feasible, is denoted by $\QU_0$.
\end{defi}
Since $\PI$-matrices are feasible for each vector $q$, they are also $\QU_0$-matrices.

\begin{thm}
  Let $M\in \real^{n\times n}$ and $I$ be the identity matrix of same order. The following statements are
  equivalent:
  \begin{enumerate}
  \item $M\in \QU_0$,
  \item The complementary range $\KAPPA(M)$ is convex,
  \item $\KAPPA(M)=\cone([I\; {-M}])$
  \end{enumerate}
\end{thm}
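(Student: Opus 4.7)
The plan is to prove the three-way equivalence by observing that (1)$\Leftrightarrow$(3) holds almost by definition, that (3)$\Rightarrow$(2) is immediate, and that the substantive work lies in (2)$\Rightarrow$(3). The starting observation is that $\cone([I\; -M])$ is exactly the set of \emph{weakly feasible} right-hand sides of the LCP: those $q$ for which there exist $w,z\ge 0$ with $w-Mz=q$. Meanwhile $\KAPPA(M)$ is the set of $q$'s for which the LCP is \emph{feasible}, i.e.\ weakly feasible with the added complementarity condition $w^{T}z=0$. With these two identifications in hand, the definition of $\QU_0$ gives (1)$\Leftrightarrow$(3) directly: since any feasible LCP is trivially weakly feasible, asserting $M\in\QU_0$ (weak feasibility $\Rightarrow$ feasibility) is equivalent to the equality $\KAPPA(M)=\cone([I\; -M])$. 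The implication (3)$\Rightarrow$(2) is then trivial, because the conic hull of a finite set of vectors is convex.

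For (2)$\Rightarrow$(3), I would first dispatch the easy inclusion $\KAPPA(M)\subseteq\cone([I\; -M])$, which holds unconditionally: any feasible pair $(w,z)$ for $q$ witnesses $q=w-Mz$ with $w,z\ge 0$. For the reverse inclusion I would show that $\KAPPA(M)$ contains each of the $2n$ generators of $\cone([I\; -M])$. Each unit vector $e_i$ lies in $\KAPPA(M)$ because $(w,z)=(e_i,0)$ is a complementary solution of the LCP with right-hand side $e_i$; similarly, each column $-M\columnIndex{i}$ lies in $\KAPPA(M)$ because $(w,z)=(0,e_i)$ is a complementary solution of the LCP with right-hand side $-M\columnIndex{i}$. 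Next I would note that $\KAPPA(M)$ is always closed under nonnegative scaling: if $(w,z)$ solves the LCP for $q$ then $(\lambda w,\lambda z)$ solves it for $\lambda q$ for any $\lambda\ge 0$, so $\KAPPA(M)$ is a cone. Combining these facts, if $\KAPPA(M)$ is also convex then it is a convex cone containing all $2n$ generators, and hence contains their conic hull $\cone([I\; -M])$, completing the equality.

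The main (mild) obstacle is precisely to verify that each column $-M\columnIndex{i}$ lies in $\KAPPA(M)$ for arbitrary $M$. I would avoid trying to exhibit a complementary basis whose cone contains the generator $-M\columnIndex{i}$, since the natural candidate $\{1,\dots,i-1,\,n+i,\,i+1,\dots,n\}$ has full rank only when $M_{ii}\ne 0$; instead I rely on the explicit complementary solution $(w,z)=(0,e_i)$, which requires no nondegeneracy assumption on $M$. With this care, the argument closes the cycle of implications for any square $M$, and the theorem follows.
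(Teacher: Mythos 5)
Your proof is correct. The paper itself states this theorem in its appendix without proof, deferring to the cited reference (Cottle, Pang and Stone), so there is no paper-internal argument to compare against; what you have reconstructed is the standard one. All the key steps check out: $\cone([I\;\,{-M}])$ is precisely the set of weakly feasible right-hand sides; the inclusion $\KAPPA(M)\subseteq\cone([I\;\,{-M}])$ holds unconditionally because a complementary solution is in particular a weakly feasible one; the definition of $\QU_0$ is literally the statement that weakly feasible implies feasible, which together with that inclusion gives the equivalence of (1) and (3); and the implication $(3)\Rightarrow(2)$ is trivial. Your argument for $(2)\Rightarrow(3)$ is clean and correct: $\KAPPA(M)$ is always a cone containing $0$ (scale any complementary solution by $\lambda\ge 0$, which preserves nonnegativity and $w^Tz=0$), it always contains each generator $e_i$ (via $(w,z)=(e_i,0)$) and each $-M\columnIndex{i}$ (via $(w,z)=(0,e_i)$), and a convex cone containing the generators contains their conic hull. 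You were also right to be careful about the generator $-M\columnIndex{i}$: the explicit complementary solution is the correct device, since the ``obvious'' complementary index set need not be a basis when $M_{ii}=0$, and indeed one could alternatively note that the complementary cone $\CI(\{n+i\})=\cone(-M\columnIndex{i})$ is among the cones whose union is $\KAPPA(M)$, since the paper allows complementary sets of any cardinality; either route works.
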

The implications of convexity of the complementary range are discussed in the next section.

\subsection*{Sufficient Matrices}
\begin{defi}
  A square matrix $M$ is called \emph{column sufficient} if it satisfies the implication:
  \begin{equation}
    [z_i(Mz)_i\leq0 \mbox{ for all }i]\; \Longrightarrow \;[z_i(Mz)_i=0\mbox{ for all }i]\enspace.
  \end{equation}
  The matrix $M$ is called \emph{row sufficient} if its transpose is column sufficient. If $M$ is both column
  and row sufficient, then it is said to be \emph{sufficient}.
\end{defi}
\begin{thm} \label{thm:rowsuff P0Q0} If $M$ is row sufficient matrix, then
  \begin{enumerate}
  \item $M\in \PI_0$,
  \item $M\in \QU_0$.
  \end{enumerate}
\end{thm}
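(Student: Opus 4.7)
The plan is to treat the two statements separately, with part 1 being a direct transposition argument and part 2 requiring a quadratic-programming detour.

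For part 1, the key step is the implication \emph{column sufficient $\Rightarrow \PI_0$}. By the characterisation in Theorem~\ref{thm:P_0_characterisation}, if $M \notin \PI_0$, there would exist a nonzero $x$ with $x_i(Mx)_i < 0$ for every $i$ with $x_i \neq 0$, and in particular $x_i(Mx)_i \leq 0$ for all $i$. Column sufficiency would then force $x_i(Mx)_i = 0$ on every coordinate, contradicting the strict inequalities on the support of $x$. Since $M$ is row sufficient, $M^T$ is column sufficient, so $M^T \in \PI_0$; and because the principal minors of $M$ and $M^T$ coincide, $M \in \PI_0$.

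For part 2, suppose $q$ is weakly feasible, so there exist $\bar z, \bar w \geq 0$ with $\bar w - M\bar z = q$. I would consider the quadratic program
\[
\min\ z^T w \quad \text{s.t.}\quad w - Mz = q,\ z \geq 0,\ w \geq 0,
\]
which is feasible and bounded below by $0$. By the Frank--Wolfe theorem (a quadratic objective bounded below on a nonempty polyhedron attains its infimum), an optimum $(z^*, w^*)$ exists; it suffices to show $(z^*)^T w^* = 0$, since $(z^*, w^*)$ would then solve the LCP. Because the constraints are linear, KKT necessity applies unconditionally: there exist $\lambda \in \real^n$ and $\mu_z, \mu_w \geq 0$ with $w^* + M^T\lambda = \mu_z$, $z^* - \lambda = \mu_w$, $(\mu_z)^T z^* = 0$, and $(\mu_w)^T w^* = 0$.

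Now substitute $\lambda_i = z_i^* - \mu_{w,i}$ and $(M^T\lambda)_i = \mu_{z,i} - w_i^*$ into the product $\lambda_i(M^T\lambda)_i$; using the coordinate-wise complementarity $z_i^*\mu_{z,i} = 0$ and $w_i^*\mu_{w,i} = 0$, the cross terms collapse and one obtains the identity
\[
\lambda_i(M^T\lambda)_i = -\mu_{w,i}\mu_{z,i} - z_i^* w_i^*,
\]
which is $\leq 0$ for every $i$. Row sufficiency of $M$ (i.e.\ column sufficiency of $M^T$), applied to the vector $\lambda$, then forces each term on the right to vanish, giving $z_i^* w_i^* = 0$ for all $i$, as required.

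The main obstacle is the KKT manipulation in part~2: one must trust that a constraint qualification holds (it does, automatically, since the constraints are linear) and must verify that the algebraic simplification of stationarity and complementarity really yields a per-coordinate signed identity in $\lambda$ to which a sufficiency hypothesis can be applied. It is notable that the hypothesis bites on the \emph{dual} variable $\lambda$ rather than on any primal variable, which is precisely why \emph{row} sufficiency, rather than column sufficiency, is the correct assumption for $\QU_0$-membership.
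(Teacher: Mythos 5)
The paper itself gives no proof of this theorem; it is stated as background in the appendix and attributed to Cottle, Pang, and Stone's book. So there is no in-paper argument to compare against. That said, your reconstruction is correct and is essentially the classical argument from the LCP literature.

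On the details: part 1 is right. Negating the characterisation in Theorem~\ref{thm:P_0_characterisation} gives a nonzero $x$ with $x_i(Mx)_i<0$ for every $i$ in its support (hence $x_i(Mx)_i\le 0$ for all $i$), and column sufficiency then flattens all products to zero, contradicting the strict inequality on a nonempty support. Transposing and using invariance of principal minors finishes it. Part 2 also checks out: Frank--Wolfe needs no convexity (a quadratic objective bounded below on a nonempty polyhedron attains its minimum), KKT necessity holds because the constraints are affine, and your substitution of $\lambda_i=z_i^*-\mu_{w,i}$ and $(M^T\lambda)_i=\mu_{z,i}-w_i^*$ together with the complementarity products $z_i^*\mu_{z,i}=0$, $w_i^*\mu_{w,i}=0$ does collapse the cross terms to give
\begin{equation*}
\lambda_i (M^T\lambda)_i \;=\; -\,\mu_{w,i}\mu_{z,i} \;-\; z_i^* w_i^* \;\le\; 0 \quad\text{for all }i,
\end{equation*}
so column sufficiency of $M^T$ forces each nonnegative summand to vanish, in particular $z_i^* w_i^*=0$. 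Your closing observation, that the sufficiency hypothesis bites on the dual variable $\lambda$ and hence must be \emph{row} sufficiency of $M$, is exactly the point, and the argument is self-contained.
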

From the theorem above we have that every column sufficient matrix also belongs to $\PI_0$. Below we state a
characterisation of column sufficient matrices, which has an important implication regarding the structure of
the resulting complementary cones.
\begin{thm}\label{thm:characterisation_CSu}
  Given a matrix $M\in \real^{n\times n}$, the following statements are equivalent:
  \begin{enumerate}
  \item $M$ is column sufficient,
  \item For each vector $q\in \real^n$ the following holds: if $(z^1,w^1), (z^2,w^2) $ are two solutions of
    the LCP defined by the matrix $M$ and the vector $q$, then $(z^1)^Tw^2=(z^2)^Tw^1=0$.
  \end{enumerate}
\end{thm}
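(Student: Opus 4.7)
The plan is to prove the two implications separately, where both rely on the identity $(z^1-z^2)^T(w^1-w^2) = (z^1)^Tw^1 - (z^1)^Tw^2 - (z^2)^Tw^1 + (z^2)^Tw^2$, combined with the fact that $M(z^1-z^2) = w^1-w^2$ whenever $w^j - Mz^j = q$ for both $j$.

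For the direction $(1) \Rightarrow (2)$, I would take two LCP solutions $(z^1,w^1)$ and $(z^2,w^2)$ sharing the same $q$, and set $\tilde z := z^1 - z^2$. Then $M\tilde z = w^1 - w^2$, so I can expand $\tilde z_i (M\tilde z)_i = (z^1_i - z^2_i)(w^1_i - w^2_i)$. Using $z^j_i w^j_i = 0$ (complementarity) together with $z^j, w^j \geq 0$, this collapses to $-z^1_i w^2_i - z^2_i w^1_i \leq 0$ for every $i$. Column sufficiency then forces each term to vanish, and since both cross products are nonnegative entrywise, summing yields $(z^1)^T w^2 = (z^2)^T w^1 = 0$.

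For the reverse direction $(2) \Rightarrow (1)$, I would start from an arbitrary $z$ satisfying $z_i(Mz)_i \leq 0$ for all $i$ and decompose it as $z = u - v$ with $u := z^+$, $v := z^-$, so that $u,v \geq 0$ and $u_i v_i = 0$. The strategy is to engineer a right-hand side $q$ so that both $(u, q+Mu)$ and $(v, q+Mv)$ become valid LCP solutions; then applying hypothesis (2) to this pair gives $u^T(q+Mv) = v^T(q+Mu) = 0$, and plugging back into the expansion identity above yields $z^T M z = \sum_i z_i(Mz)_i = 0$. Since each summand is nonpositive by assumption, every $z_i(Mz)_i$ must be zero, establishing column sufficiency.

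The main obstacle is this construction of $q$. Partition the indices into $I_+ = \{i : z_i > 0\}$, $I_- = \{i : z_i < 0\}$, $I_0 = \{i : z_i = 0\}$. On $I_+$, complementarity of $(u, q+Mu)$ forces $q_i = -(Mu)_i$, and I would then check that $w^2_i = (Mv-Mu)_i = -(Mz)_i \geq 0$ using the sign condition on $I_+$. Symmetrically on $I_-$, set $q_i = -(Mv)_i$ and verify $w^1_i = (Mz)_i \geq 0$. On $I_0$, where $u_i = v_i = 0$, complementarity imposes no constraint, so I can pick any $q_i \geq \max(-(Mu)_i, -(Mv)_i)$ to ensure $w^1, w^2 \geq 0$. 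Once the construction is verified, the algebra collapses cleanly. The only subtlety is bookkeeping the case split on $I_+, I_-, I_0$ to confirm that $q$ is well-defined and both candidate solutions satisfy all LCP conditions simultaneously.
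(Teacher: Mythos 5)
Your proof is correct. The paper states this theorem in its appendix as a known result drawn from the cited Cottle--Pang--Stone monograph and supplies no proof of its own, so there is no in-paper argument to compare against; the argument you give is the standard one in the LCP literature. Both directions check out: for $(1)\Rightarrow(2)$ you correctly compute $\tilde z_i(M\tilde z)_i = -(z^1_i w^2_i + z^2_i w^1_i) \le 0$ for $\tilde z := z^1 - z^2$ and invoke column sufficiency to annihilate each nonnegative summand; for $(2)\Rightarrow(1)$ you construct a right-hand side $q$ (forced to $-(Mz^+)_i$ on $I_+$ and $-(Mz^-)_i$ on $I_-$ by complementarity, and chosen $\geq \max(-(Mz^+)_i, -(Mz^-)_i)$ on $I_0$) so that $(z^+, q+Mz^+)$ and $(z^-, q+Mz^-)$ are genuine LCP solutions for the same $q$, and the hypothesis together with the complementarity of each solution yields $z^T M z = (z^+ - z^-)^T(w^1 - w^2) = 0$, forcing every nonpositive term $z_i(Mz)_i$ to vanish.
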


\end{document}